\renewcommand{\leq}{\leqslant} 
\renewcommand{\geq}{\geqslant} 
\newcommand{\C}{\mathcal{C}}
\newcommand{\Po}{\mathcal{P}}
\newcommand{\K}{\mathcal{K}} 
\newcommand{\E}{\mathbb{E}^3}
\title{Wythoffian Skeletal Polyhedra in Ordinary Space, I}
\author{
Egon Schulte\thanks{Supported by NSA-grant H98230-14-1-0124. Email: schulte@neu.edu}\ and\ 
Abigail Williams\thanks{Email:\ abigail.williams13@gmail.com}\\[.03in]
Department of Mathematics\\[.03in]
Northeastern University, Boston, MA 02115, USA}
\begin{document}
\authorrunning{Schulte and Williams}\titlerunning{Wythoffian Skeletal Polyhedra in Ordinary Space, I}\maketitle

\begin{abstract}
Skeletal polyhedra are discrete structures made up of finite, flat or skew, or infinite, helical or zigzag, polygons as faces, with two faces on each edge and a circular vertex-figure at each vertex. When a variant of Wythoff's construction is applied to the forty-eight regular skeletal polyhedra (Gr\"unbaum-Dress polyhedra) in ordinary space, new highly symmetric skeletal polyhedra arise as ``truncations" of the original polyhedra. These Wythoffians are vertex-transitive and often feature vertex configurations with an attractive mix of different face shapes. The present paper describes the blueprint for the construction and treats the Wythoffians for distinguished classes of regular polyhedra. The Wythoffians for the remaining classes of regular polyhedra will be discussed in Part II, by the second author. We also examine when the construction produces uniform skeletal polyhedra. 
\end{abstract}


{\bf Key words.} ~ Uniform polyhedron, Archimedean solids, regular polyhedron, maps on surfaces, Wythoff's construction, truncation

{\bf MSC 2010.} ~ Primary: 51M20.\ Secondary: 52B15.

\section{Introduction}

Since ancient times, mathematicians and scientists have been studying polyhedra in ordinary Euclidean $3$-space $\E$. With the passage of time, various notions of polyhedra have attracted attention and have brought to light exciting new classes of highly symmetric structures including the well-known Platonic and Archimedean solids, the Kepler-Poinsot polyhedra, the Petrie-Coxeter polyhedra, and the more recently discovered Gr\"unbaum-Dress polyhedra (see \cite{Cox7,Cox,Dress1981,Dress1985,Grunpoly}). Over time we can observe a shift from the classical approach of viewing a polyhedron as a solid, to topological and algebraic approaches focussing on the underlying maps on surfaces (see Coxeter-Moser~\cite{CoxMos}), to graph-theoretical approaches highlighting the combinatorial incidence structures and featuring a polyhedron as a skeletal figure in space.

The skeletal approach to polyhedra in $\E$ was pioneered by Gr\"unbaum in~\cite{Grunpoly} and has had an enormous impact on the field. Skeletal polyhedra are discrete geometric structures made up of convex or non-convex, flat (planar) or skew, finite or infinite (helical or zigzag) polygons as faces, with a circular vertex-figure at each vertex, such that every edge lies in exactly two faces. There has been a lot of recent activity in this area: the skeletal regular polyhedra were enumerated by Gr\"unbaum~\cite{Grunpoly} and Dress~\cite{Dress1981,Dress1985} (for a simpler approach to the classification see McMullen \& Schulte~\cite{McSch1997,SchMc}); the skeletal chiral polyhedra were classified in \cite{SchChiral1,SchChiral2} (see also Pellicer \& Weiss~\cite{PellicerWeiss}); the regular polygonal complexes, a more general class of discrete skeletal structures than polyhedra, were classified in Pellicer \& Schulte~\cite{Pellicer,Pellicer2}; and corresponding enumerations for certain classes of regular polyhedra, polytopes, or apeirotopes (infinite polytopes) in higher-dimensional Euclidean spaces were achieved by McMullen~\cite{PMc,PMcFullRank,PMcReal3} (see also Arocha, Bracho \& Montejano~\cite{Aro} and Bracho~\cite{Bracho}). All these skeletal structures are relevant to the study of crystal nets in crystal chemistry (see~\cite{Delgado,OKee,OKeefe,Sch2,Wells}).

The present paper and its successor~\cite{Williams2} by the second author are inspired by the quest for a deeper understanding of the uniform skeletal polyhedra in $\E$, that is, the skeletal analogues of the Archimedean solids (see also \cite{Williams}). There is a large body of literature on the traditional uniform polyhedra and higher-dimensional polytopes (see~\cite{Cox5,Coxrs2,Coxrs3,Johnson3,Mart}). Recall that a convex polyhedron in $\E$ is said to be {\em uniform\/} if its faces are convex regular polygons and its symmetry group is transitive on the vertices. The uniform convex polyhedra are precisely the Archimedean solids and the prisms and antiprisms. The classification for the finite, convex or non-convex, uniform polyhedra with planar faces was essentially obtained in a classical paper by Coxeter, Longuet-Higgins and Miller~\cite{Cox3}, but the completeness of the enumeration was only proved years later, independently, by Skilling~\cite{Skilling2,Skilling} and Har'El~\cite{Zvi}. The classification of arbitrary uniform skeletal polyhedra is a challenging open problem. Even the finite polyhedra with skew faces have not been classified.

The Wythoffians of the regular skeletal polyhedra studied in this paper represent a tractable class of skeletal polyhedra that contains a wealth of new examples of uniform polyhedra with non-planar faces. In fact, our study actually goes a long way in classifying all the uniform skeletal polyhedra in $\E$. The name ``Wythoffian" is derived from Wythoff's construction (see~\cite{Cox,SchMc}). Our approach takes a geometrically regular polyhedron $P$ in $\E$ as input and then produces from it up to seven different kinds of geometric Wythoffians by an analogue of Wythoff's construction. The procedure applies to all forty-eight geometrically regular polyhedra in $\E$ and often produces amazing figures as output. Our goal is to analyze these Wythoffians.

The paper is organized as follows. In Section~\ref{geomab} we begin by reviewing the basic concept of a regular polyhedron, both geometric and abstract, and discussing realizations as a means to connect the abstract theory with the geometric theory. In Section~\ref{abwyth} we introduce the seven Wythoffians at the abstract level and then in Section~\ref{geowyth} provide the blueprint for the realization as geometric Wythoffians in $\E$. Finally, in Section~\ref{wythvarious} we describe the geometric Wythoffians of various distinguished classes of regular polyhedra. The subsequent paper~\cite{Williams2} treats the geometric Wythoffians for the remaining classes of regular polyhedra.

\section{Geometric and abstract polyhedra}
\label{geomab}

We begin by defining a geometric polyhedron as a discrete structure in Euclidean 3-space $\mathbb{E}^3$ rather than as a realization of an abstract polyhedron. 

Given a geometric figure in $\E$, its (geometric) {\em symmetry group\/} consists of all isometries of its affine hull that map the figure to itself. When a figure is linear or planar we sometimes view this group as a subgroup of the isometry group of $\E$, with the understanding that the elements of the group have been extended trivially from the affine hull of the figure to the entire space $\E$. 

\subsection{Geometric polyhedra}

Informally, a geometric polyhedron will consist of a family of vertices, edges, and finite or infinite polygons, all fitting together in a way characteristic for traditional convex polyhedra (see~\cite{Grunpoly} and \cite[Ch. 7E]{SchMc}). For two distinct points $u$ and $u'$ of $\E$ we let $(u,u')$ denote the closed line segment with ends $u$ and $u'$. 

A {\em finite polygon\/}, or simply an {\em $n$-gon\/}, $(v_1, v_2, \dots, v_n)$ in $\mathbb{E}^3$ is a figure formed by distinct points $v_1, \ldots, v_n$, together with the line segments $(v_i, v_{i+1})$, for $i = 1, \dots, n-1$, and $(v_n, v_1)$. Similarly, an {\em infinite polygon\/} consists of an infinite sequence of distinct points $(\ldots, v_{-2},v_{-1}, v_0, v_1, v_2,\ldots)$ and of the line segments $(v_i, v_{i+1})$ for each $i$, such that each compact subset of $\mathbb{E}^3$ meets only finitely many line segments. In either case the points are the {\em vertices\/} and the line segments the {\em edges\/} of the polygon. 

 A polygon is {\em geometrically regular if its geometric symmetry group is a (finite or infinite dihedral) group acting transitively on the {\em flags\/}, that is, the $2$-element sets consisting of a vertex and an incident edge. }

\begin{definition}
\label{poldef}
A {\em geometric polyhedron}, or simply {\em polyhedron} (if the context is clear), $P$ in $\mathbb{E}^3$ consists of a set of points, called {\em vertices}, a set of line segments, called {\em edges}, and a set of polygons, called {\em faces}, such that the following properties are satisfied.
\begin{itemize}\label{defpol}
\item[(a)] The graph defined by the vertices and edges of $P$, called the {\em edge graph\/} of $P$, is connected.
\item[(b)] The vertex-figure of $P$ at each vertex of $P$ is connected. By the {\em vertex-figure\/} of $P$ at a vertex $v$ we mean the graph whose vertices are the neighbors of $v$ in the edge graph of $P$ and whose edges are the line segments $(u,w)$, where $(u, v)$ and $(v, w)$ are adjacent edges of a common face of $P$.
\item[(c)] Each edge of $P$ is contained in exactly two faces of $P$.
\item[(d)] $P$ is {\em discrete\/}, meaning that each compact subset of $\mathbb{E}^3$ meets only finitely many faces of $P$.
\end{itemize}
\end{definition}

Note that the discreteness assumption in Definition~\ref{poldef}(d) implies that the vertex-figure at every vertex of a polyhedron $P$ is a finite polygon. Thus vertices have finite valency in the edge graph of $P$.  The edge graph is often called the {\em $1$-skeleton} of $P$.

A {\em flag\/} of a geometric polyhedron $P$ is a $3$-element set containing a vertex, an edge, and a face of $P$, all mutually incident.  Two flags of $P$ are called {\em adjacent\/} if they differ in precisely one element.  An {\it apeirohedron\/} is an infinite geometric polyhedron.

A geometric polyhedron $P$ in $\E$ is said to be ({\em geometrically\/}) {\em regular} if its symmetry group $G(P)$ is transitive on the flags of $P$. The symmetry group $G(P)$ of a regular polyhedron $P$ is transitive, separately, on the vertices, edges, and faces of $P$. In particular, the faces are necessarily regular polygons, either finite, planar (convex or star-) polygons or non-planar, {\em skew\/}, polygons, or infinite, planar zigzags or helical polygons (see \cite[Ch. 1]{Cox4} or \cite{Grunpoly}). Linear apeirogons do not occur as faces of regular polyhedra. 

We also briefly touch on chiral polyhedra. These are nearly regular polyhedra. A geometric polyhedron $P$ is called ({\em geometrically\/}) {\em chiral\/} if its symmetry group has two orbits on the flags of $P$, such that adjacent flags are in distinct orbits. 

The geometric polyhedra in $\E$ which are regular or chiral all have a vertex-transitive symmetry group and regular polygons as faces. They are particular instances of uniform polyhedra. A geometric polyhedron $P$ is said to be ({\em geometrically\/}) {\em uniform\/} if $P$ has a vertex-transitive symmetry group and regular polygons as faces. The uniform polyhedra with planar faces have attracted a lot attention in the literature. Our methods will provide many new examples of uniform skeletal polyhedra with nonplanar faces.

At times we encounter geometric figures which are not polyhedra but share some of their properties. Examples are the polygonal complexes described in \cite{Pellicer,Pellicer2}. Roughly speaking, a {\em polygonal complex\/} $K$ in $\E$ is a structure with the defining properties (a), (b) and (d) of Definition~\ref{defpol} for polyhedra, but with property (c) replaced by the more general property, (c') say, requiring that each edge of $K$ be contained in exactly $r$ faces of $K$, for a fixed number $r \geq 2$. The polygonal complexes with $r=2$ are just the geometric polyhedra. The vertex-figures of polygonal complexes need not be simple polygons as for polyhedra; they even can be graphs with double edges (edges of multiplicity 2). A polygonal complex is {\em regular\/} if its geometric symmetry group is transitive on the flags. 

The Wythoffians we construct from geometrically regular polyhedra $P$ in $\E$ will usually be generated from the orbit of a single point inside the fundamental region of the symmetry group of $P$. Given a discrete group $G$ of isometries of an $n$-dimensional Euclidean space $\mathbb{E}^n$, an open subset $D$ of $\mathbb{E}^n$ is called a \emph{fundamental region\/} for $G$ if $r(D)\cap D=\varnothing$ for $r\in G\setminus\{1\}$ and $\mathbb{E}^n=\bigcup_{r\in G} r(\text{cl}(D))$, where $cl(D)$ denotes the closure of $D$ (see~\cite{Benson}). Note that our notion of fundamental region is not quite consistent with the notion of a fundamental simplex used in the theory of Coxeter groups or related groups (see \cite[Ch. 3]{SchMc}), where a fundamental simplex by definition is a closed simplex (its interior is a fundamental region according to our definition).

Some of the groups we encounter have complicated fundamental regions. The following procedure produces a possible fundamental region for any given discrete group $G$ of isometries of~$\mathbb{E}^n$. Let $u\in\mathbb{E}^n$ be a point that is not held invariant under any non-identity transformation in $G$. For $r\in G$ define $H[r(u)]$ as the open half space containing $u$ bounded by the hyperplane which perpendicularly bisects the line segment between $u$ and $r(u)$. Then $D:=\bigcap_{r\in G} H[r(u)]$ is a fundamental region of $G$ in $\mathbb{E}^n$. In other words, $D$ is the open Dirichlet-Voronoi region, centered at $u$, of the orbit of $u$ under $G$ in $\mathbb{E}^n$ (see \cite{Conway}).

\subsection{Abstract polyhedra}

While our focus is on geometric polyhedra it is often useful to view a geometric polyhedron as a realization of an abstract polyhedron in Euclidean space. We begin with a brief review of the underlying abstract theory (see \cite[Ch.~2]{SchMc}). 

An {\em abstract polyhedron\/}, or {\em abstract $3$-polytope\/}, is a partially ordered set $\mathcal{P}$ with a strictly monotone {\em rank\/} function with range $\{-1,0,1,2,3\}$. The elements of rank $j$ are the {\em $j$-faces\/} of~$\mathcal{P}$. For $j = 0$, $1$ or $2$, we also call $j$-faces {\em vertices}, {\em edges\/} and {\em facets\/}, respectively. When there is little chance of confusion, we use standard terminology for polyhedra and reserve the term ``face'' for ``$2$-face'' (facet). There is a minimum face $F_{-1}$ (of rank $-1$) and a maximum face~$F_3$ (of rank $3$) in $\Po$; this condition is included for convenience and is often omitted as for geometric polyhedra. The {\em flags\/} (maximal totally ordered subsets) of $\mathcal{P}$ each contain, besides $F_{-1}$ and $F_3$, exactly one vertex, one edge and one facet. In practice, when listing the elements of a flag we often suppress $F_{-1}$ and $F_3$. Further, $\mathcal{P}$ is {\em strongly flag-connected\/}, meaning that any two flags $\Phi$ and $\Psi$ of $\mathcal{P}$ can be joined by a sequence of flags $\Phi = \Phi_{0},\Phi_{1},\ldots,\Phi_{k} =\Psi$, where $\Phi_{i-1}$ and $\Phi_{i}$ are {\em adjacent\/} (differ by one face), and $\Phi \cap\Psi \subseteq \Phi_{i}$ for each $i$. Finally, if $F$ and $G$ are a $(j-1)$-face and a $(j+1)$-face with $F < G$ and $0 \leq j \leq 2$, then there are exactly {\em two\/} $j$-faces $H$ such that $F < H < G$. As a consequence, for $0 \leq j \leq 2$, every flag $\Phi$ of $\mathcal{P}$ is adjacent to just one flag, denoted $\Phi^j$, differing in the $j$-face; the flags $\Phi$ and $\Phi^j$ are said to be {\em $j$-adjacent\/} to each other. 

When $F$ and $G$ are two faces of an abstract polyhedron $\mathcal{P}$ with $F \leq G$, we call
$G/F := \{H \mid F \leq H \leq G\}$ a {\em section\/} of $\mathcal{P}$. We usually identify a face $F$ with the section $F/F_{-1}$. The section $F_{3}/F$ is the {\em co-face\/} of $\mathcal{P}$ at $F$, or the {\em vertex-figure\/} at $F$ if $F$ is a vertex.

If all facets of an abstract polyhedron $\Po$ are $p$-gons for some $p$, and all vertex-figures are $q$-gons for some $q$, then $\Po$ is said to be of ({\em Schl\"{a}fli\/}) {\em type\/} $\{p,q\}$; here $p$ and $q$ are permitted to be infinite. We call an abstract polyhedron {\em locally finite\/} if all its facets and all its vertex-figures are finite  polygons.

An {\em automorphism\/} of an abstract polyhedron $\Po$ is an incidence preserving bijection of $\Po$ (that is, if $\varphi$ is the bijection, then $F\leq G$ in $\Po$ if and only if $\varphi(F)\leq\varphi(G)$ in $\Po$.) By $\Gamma(\Po)$ we denote the (combinatorial) {\em automorphism group\/} of $\Po$.

We call an abstract polyhedron $\mathcal{P}$ {\em regular\/} if $\Gamma(\mathcal{P})$ is transitive on the flags of $\mathcal{P}$. Let $\Phi := \{F_{0},F_{1},F_{2}\}$ be a {\em base\/} flag of $\mathcal{P}$. The automorphism group $\Gamma(\mathcal{P})$ of a regular polyhedron $\mathcal{P}$ is generated by {\em distinguished generators\/} $\rho_{0},\rho_{1},\rho_{2}$ ({\em with respect to\/}~$\Phi$), where $\rho_{j}$ is the unique automorphism which fixes all faces of $\Phi$ but the $j$-face. These generators satisfy the standard Coxeter-type relations
\begin{equation} 
\label{autgreg}
\rho_{0}^2 = \rho_{1}^2 = \rho_{2}^2 = 
(\rho_{0}\rho_{1})^{p} = (\rho_{1}\rho_{2})^{q} =
(\rho_{0}\rho_{2})^{2} = 1
\end{equation}
determined by the {\em type\/} $\{p,q\}$ of $\mathcal{P}$ (when $p=\infty$ or $q=\infty$ the corresponding relation is superfluous and hence is omitted); in general there are also other independent relations. Note that, in a natural way, the automorphism group of the facet of $\mathcal{P}$ is $\langle\rho_{0},\rho_{1}\rangle$, while that of the vertex-figure is $\langle\rho_{1},\rho_{2}\rangle$. 

An abstract polyhedron $\mathcal{P}$ is said to be {\em chiral\/} if $\Gamma(\mathcal{P})$ has two orbits on the flags, such that adjacent flags are in distinct orbits. Note that the underlying abstract polyhedron of a geometrically chiral (geometric) polyhedron must be (combinatorially) chiral or (combinatorially) regular.

In analogy with the geometric case we could define an abstract polyhedron $\Po$ to be (combinatorially) ``uniform'' if $\Po$ has regular facets and $\Gamma(\Po)$ acts transitively on the vertices of $\Po$. However, the facets of any abstract polyhedron trivially are combinatorially regular, so being uniform just reduces to being vertex-transitive under the automorphism group. 

The {\em Petrie dual\/} of a (geometric or abstract) regular polyhedron $P$ has the same vertices and edges as $P$; its facets are the {\em Petrie polygons\/} of $\mathcal{P}$, which are paths along the edges of $\Po$ such that any two successive edges, but not three, belong to a facet of $\mathcal{P}$. 

\subsection{Realizations}

The abstract theory is connected to the geometric theory through the concept of a realization. Let $\mathcal{P}$ be an abstract polyhedron, and let $\mathcal{F}_j$ denote its set of $j$-faces for $j=0,1,2$. Following \cite[Sect.~5A]{SchMc}, a {\em realization\/} of $\mathcal{P}$ is a mapping
$\beta\colon\mathcal{F}_0\rightarrow E$ of the vertex-set $\mathcal{F}_0$ into some Euclidean space $E$. Then define $\beta_0 := \beta$ and $V_{0} := \beta(\mathcal{P}_0)$, and write $2^X$ for the family of subsets of a set $X$. The realization $\beta$ recursively induces two surjections:\ a surjection $\beta_{1}\colon \mathcal{F}_{1} \rightarrow V_{1}$, with $V_{1}\subset 2^{V_{0}}$ consisting of the elements
\[ \beta_{1}(F) := \{\beta_{0}(G) \mid G\in \mathcal{F}_{0} \mbox{ and } G\leq F\} \]
for $F\in\mathcal{F}_1$; and a surjection $\beta_{2}\colon \mathcal{F}_{2} \rightarrow V_{2}$, with $V_{2}\subset 2^{V_{1}}$ consisting of the elements
\[ \beta_{2}(F) := \{\beta_{1}(G) \mid G\in \mathcal{F}_{1} \mbox{ and } G\leq F\} \]
for $F\in\mathcal{F}_2$. Even though each $\beta_j$ is determined by $\beta$, it is helpful to think of the realization as given by all the $\beta_j$. A realization $\beta$ is said to be {\em faithful\/} if each $\beta_j$ is a bijection; otherwise, $\beta$ is {\em degenerate\/}. Note that not every abstract polyhedron admits a realization in a Euclidean space.  (In different but related contexts, a realization is sometimes called a representation~\cite{GRun5,PiZi}.)

In our applications, $E=\E$ and all realizations are faithful (and discrete). In this case, the vertices, edges and facets of $\Po$ are in one-to-one correspondence with certain points, line segments, and (finite or infinite) polygons in $\E$, and it is safe to identify a face of $\Po$ with its image in $\E$. The resulting family of points, line segments, and polygons then is a geometric polyhedron in $\E$ and is denoted by $P$; it is understood that $P$ inherits the partial ordering of $\Po$. We frequently identify $\Po$ and $P$. Note that the symmetry group of a faithful realization is a subgroup of the automorphism group.

Conversely, all geometric polyhedra as defined above arise as realizations of abstract polyhedra. In particular, the geometrically regular polyhedra in $\E$ are precisely the $3$-dimensional realizations of abstract regular polyhedra which are discrete and faithful and have a flag-transitive symmetry group. These polyhedra have been extensively studied (see~\cite[Sect.~7E]{SchMc}). We briefly review them in Section~\ref{geowyth} as they form the basis of our construction.

For geometrically regular polyhedra $P$ in $\E$ we prefer to denote the distinguished generators of $G(P)$ by $r_0,r_1,r_2$. Thus, if $\Phi = \{F_{0},F_{1},F_{2}\}$ is again a {\em base\/} flag of $P$, and $r_j$ the involutory symmetry of $P$ fixing all faces of $\Phi$ but the $j$-face, then $G(P)=\langle r_0,r_1,r_2\rangle$ and the Coxeter-type relations 
\begin{equation} 
\label{greg}
r_{0}^2 = r_{1}^2 = r_{2}^2 = 
(r_{0}r_{1})^{p} = (r_{1}r_{2})^{q} =
(r_{0}r_{2})^{2} = 1
\end{equation}
hold, where again $\{p,q\}$ is the type of $P$. Here $q$ must be finite since $P$ is discrete; however, $p$ still can be infinite. When $P$ is geometrically regular the groups $\Gamma(P)$ and $G(P)$ are isomorphic; in particular, the mapping $\rho_j\mapsto r_j\;(j=0,1,2)$ extends to an isomorphism between the groups.

Two realizations of an abstract regular polyhedron $\Po$ can be combined to give a new realization of $\Po$ in a higher-dimensional space. Suppose we have two (not necessarily faithful) realizations of $\Po$ in two Euclidean spaces, say $P$ with generators $r_{0},r_{1},r_{2}$ in $E$ and $P'$ with generators $r_{0}',r_{1}',r_{2}'$ in $E'$ (possibly some $r_{j}=1$ or $r_{j}'=1$ if $P$ or $P'$ is not faithful). Then their {\em blend}, denoted $P \# P'$, is a realization of $\Po$  in $E\times E'$ obtained by Wythoff's construction as an orbit structure as follows (see \cite{Cox} and \cite[Ch. 5A]{SchMc}). Write $R_j$ and $R_{j}'$ for the {\em mirror\/} (fixed point set) of a distinguished generator $r_j$ in $E$ or $r_{j}'$ in $E'$, respectively. The cartesian products $R_{0}\times R_{0}'$, $R_{1}\times R_{1}'$ and $R_{2}\times R_{2}'$, respectively, then are the mirrors for involutory isometries $s_{0}$, $s_{1}$ and $s_2$ of $E\times E'$ which generate the symmetry group $G(P \# P')$ of the blend. Indeed, if $v\in R_{1}\cap R_{2}$ and $v'\in R_{1}'\cap R_{2}'$ are the base (initial) vertices of the two realizations, then the point $w:=(v,v')$ in $E\times E'$ can be chosen as the base (initial) vertex for the blend $P \# P'$. Then the base edge and base face are determined by the orbits of $w$ under the subgroups $\langle s_0\rangle$ and $\langle s_0,s_1\rangle$, respectively. Finally, the vertices, edges, and faces of the entire polyhedron $P \# P'$ are the images of the base vertex, base edge, or base face under the entire group $\langle s_0,s_1,s_2\rangle$. A realization which cannot be expressed as a blend in a non-trivial way is called \emph{pure}.

\section{Wythoffians of abstract polyhedra}
\label{abwyth}

Every abstract polyhedron $\mathcal{P}$ naturally gives rise to generally seven new abstract polyhedra, the abstract Wythoffians of $\mathcal{P}$. These Wythoffians have appeared in many applications, usually under different names (see \cite{Conway,PiRa}); they are often called {\em truncations\/} of the respective polyhedron or map (see \cite{Cox}). 
(In the literature, the word ``Wythoffian'' is mostly used as an adjective, not a noun, to describe a figure obtained by Wythoff's construction. The use of ``Wythoffian" in~\cite{deza} is similar to ours.)

\subsection{Wythoffians from the order complex}

It is convenient to construct the Wythoffians from the order complex of $\Po$. The {\em order complex\/} $\C := \C(\Po)$ of an abstract polyhedron $\Po$ is the $2$-dimensional abstract simplicial complex, whose vertices are the proper faces of $\Po$, and whose simplices are the chains (totally ordered subsets) of $\Po$ which only contain proper faces of $\Po$ (see \cite[Ch. 2C]{SchMc}). The maximal simplices in $\C$ are in one-to-one correspondence with the flags of $\Po$, and are $2$-dimensional. The {\em type\/} of a vertex of $\C$ is its rank as a face of $\Po$. More generally, the {\em type\/} of a simplex $\Omega$ of $\C$ is the set of types of the vertices of $\Omega$. Thus every $2$-simplex has type $\{0,1,2\}$. Two $2$-simplices of $\C$ are {\em $j$-adjacent\/} if and only if they differ in their vertices of type $j$. With this type function on chains, the order complex acquires the structure of a {\em labelled\/} simplicial complex.

The defining properties of $\Po$ translate into strong topological properties of $\C$. In particular, each
$2$-simplex of $\C$ is $j$-adjacent to exactly one other $2$-simplex, for $j=0,1,2$. When rephrased for $\C$, the strong flag-connectedness of $\Po$ says that, for any two $2$-simplices $\Phi$ and $\Psi$ of $\C$ which intersect in a face $\Omega$ (a simplex or the empty set) of $\C$, there exists a sequence 
$\Phi = \Phi_{0},\Phi_{1},\ldots,\Phi_{k-1},\Phi_{k} = \Psi$ of $2$-simplices of $\C$, all containing $\Omega$, such that $\Phi_{i-1}$ and $\Phi_{i}$ are adjacent for $i = 1,\ldots,k$. 

Recall that the {\em star\/} of a face $\Omega$ in a simplicial complex is the subcomplex consisting of all the simplices which contain $\Omega$, and all their faces. The {\em link\/} of $\Omega$ is the subcomplex consisting of all the simplices in the star of $\Omega$ which do not intersect $\Omega$. For an abstract polyhedron~$\Po$, the structure of the link of a vertex in its order complex $\C$ depends on the number of $2$-simplices it is contained in. Every vertex of $\C$ of type $1$ has a link isomorphic to a $4$-cycle. If a vertex $F$ of $\C$ is of type $2$, and the 2-face $F/F_{-1}$ of $\Po$ is a $p$-gon, then the link of $F$ in $C$ is a $2p$-cycle if $p$ is finite, or an infinite path (an infinite $1$-dimensional simplicial complex in which every vertex lies in exactly two $1$-simplices) if $p$ is infinite. Similarly, if $F$ is a vertex of $\C$ of type $0$, and the vertex-figure $F_{3}/F$ of $\Po$ at $F$ is a $q$-gon, then the link of $F$ in $\C$ is a $2q$-cycle if $q$ is finite, or an infinite path if $q$ is infinite.

If $\Po$ is a locally finite abstract polyhedron with order complex $\C$, then $\Po$ can be viewed as a face-to-face tessellation on a (compact or non-compact) closed surface $S$ by topological polygons, and $\C$ as a triangulation of $S$ refining $\Po$ in the manner of a ``barycentric subdivision''. If $\Po$ has faces or vertex-figures which are apeirogons, then the link of the corresponding vertices in $\C$ is not a $1$-sphere and so $\Po$ is not supported by a closed surface; however, $\C$ still has the structure of a $2$-dimensional pseudo-manifold, which we again denote  by $S$. (A $2$-dimensional pseudo-manifold is a topological space $X$ with a $2$-dimensional triangulation $\K$ such that the following three conditions hold:\ first, $X$ is the union of all triangles of $\K$; second, every edge of $\K$ lies in exactly two triangles of $\K$; and third, any two triangles of $\K$ can be joined by a finite sequence of triangles of $\K$ such that successive triangles in the sequence intersect in an edge~\cite{Seif}.) In our applications, the vertex-figures of $\Po$ are always finite polygons, whereas the faces are often apeirogons.

If $\Po$ is regular and $\rho_0,\rho_1,\rho_2$ are the generators of $\Gamma(\Po)$ associated with a base flag $\Phi$ of $\Po$, then $\Gamma(\Po)$ acts on $S$ as a group of homeomorphisms of $S$ and the distinguished generators appear as ``combinatorial reflections'' in the sides of the $2$-simplex $\Phi$ of $\C$. The 2-simplex $\Phi$ is a {\em fundamental triangle\/} for the action of $\Gamma(\Po)$ on $S$, meaning that the orbit of every point of $S$ under $\Gamma(\Po)$ meets $\Phi$ in exactly one point.  (Recall our previous remark about the notion of fundamental simplex, or in this case, fundamental triangle.) In fact, every $2$-simplex of $\C$ is a fundamental triangle for $\Gamma(\Po)$ on $S$, with a conjugate set of distinguished generators occurring as ``combinatorial reflections'' in its sides. For a regular polyhedron, the order complex can be completely described in terms of $\Gamma(\Po)$ since this is already true for $\Po$ itself (see \cite[Sect. 2C]{SchMc}).

For example, consider the regular tessellation $\Po = \{4,4\}$ of the Euclidean plane by squares, four coming together at a vertex. Here $\C$ appears as the actual barycentric subdivision of the tessellation, and any triangle in $\C$ can serve as the fundamental region for the symmetry group, which in this case is a Euclidean plane reflection group. 

An alternative approach to abstract Wythoffians using dissections of fundamental triangles is described in Pisanski \& Zitnik~\cite{PiZi} (see also~\cite{delrio}).

\subsection{Wythoffians of abstract regular polyhedra}

The Wythoffians of an abstract regular polyhedron $\Po$ are derived as orbit structures from the order complex $\C$, or equivalently, from the underlying surface or pseudo-manifold $S$. The construction can be carried out at a purely combinatorial level for arbitrary abstract polyhedra without reference to automorphism groups (by exploiting the action of the monodromy groups~\cite{Mon}). However, as we are mainly interested in geometric Wythoffians derived from geometrically regular polyhedra, we will concentrate on regular polyhedra and exploit their groups. The method employed is known as Wythoff's construction (see~\cite{Cox,SchMc}). 

Now let $\Po$ be an abstract regular polyhedron with order complex $\C$ and surface or pseudo-manifold $S$. Suppose the base flag $\Phi=\{F_0,F_1,F_2\}$ of $\Po$ is realized as a fundamental triangle on $S$ with vertices $F_0,F_1,F_2$. This fundamental triangle $\Phi$ naturally partitions into seven subsets:\ its vertices, the relative interiors of its edges, and its relative interior. The closure of each of these subsets is a simplex in $\C$ and hence has a type $I$. Thus each subset is naturally associated with a nonempty subset $I$ of $\{0,1,2\}$ specifying the generators which move the points of $\Phi$ in the subset; more precisely, the subset belonging to $I$ consists of the points of $\Phi$ which are transient under the generators $\rho_i$ with $i\in I$ but invariant under the generators $\rho_i$ with $i\notin I$. More explicitly, for the vertex $F_i$ of $\Phi$ the type $I$ is $\{i\}$; for the relative interior of the edge joining $F_i$ and $F_j$ it is $\{i,j\}$; and for the relative interior it is~$\{0,1,2\}$. 

\begin{table}
\begin{center}
\begin{tabular}{|c|c|c|c|c|}
\hline
\raisebox{-.5\totalheight}[\totalheight][1\totalheight]{\large{\textbf{\textit{I}}}} & \raisebox{-.5\totalheight}[\totalheight][1\totalheight]{\large{\textbf{Initial vertex}}}& \raisebox{-.5\totalheight}[\totalheight][1\totalheight]{\large{\textbf{Ringed diagram}}} & \raisebox{-.5\totalheight}[\totalheight][1\totalheight]{\large{\textbf{Wythoffian}}} & \raisebox{-.5\totalheight}[\totalheight][1\totalheight]{\large{  \textbf{Vertex Symbol}  }}\\
\hline
\raisebox{-1.8\height}{$\{0\}$} & \raisebox{-.78\totalheight}{ \includegraphics*[clip, scale = .45]{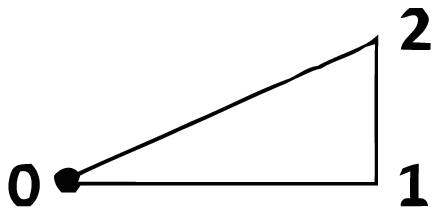}}& \raisebox{-1.15\totalheight}{ \includegraphics*[clip, scale = .25]{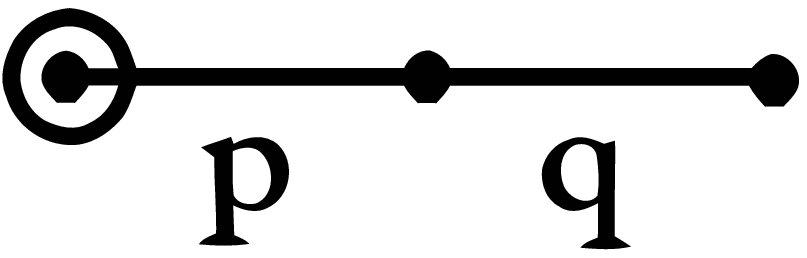}}& \raisebox{-1.8\height}{$P^{0}$}& \raisebox{-1.8\height}{  $(p^q)$  }\\
\hline
\raisebox{-1.8\height}{$\{1\}$} & \raisebox{-.78\totalheight}{ \includegraphics*[clip, scale = .45]{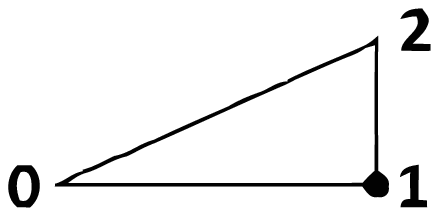}}&\raisebox{-1.15\totalheight}{ \includegraphics*[clip, scale = .25]{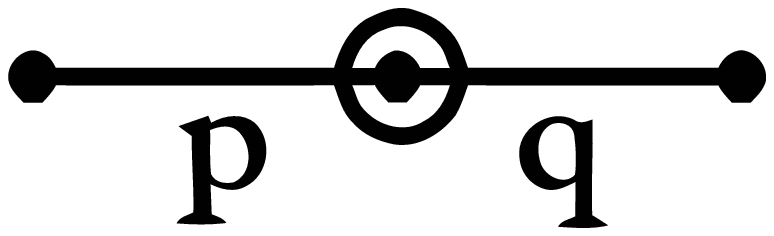}}&\raisebox{-1.8\height}{$P^{1}$}& \raisebox{-1.8\height}{ $(p.q.p.q)$ }\\
\hline
\raisebox{-1.8\height}{$\{2\}$} & \raisebox{-.8\totalheight}{ \includegraphics*[clip, scale = .45]{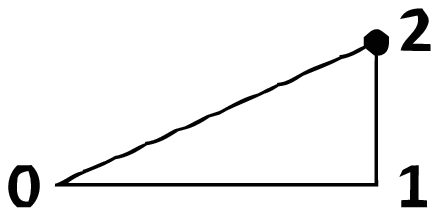}}&\raisebox{-1.15\totalheight}{ \includegraphics*[clip, scale = .25]{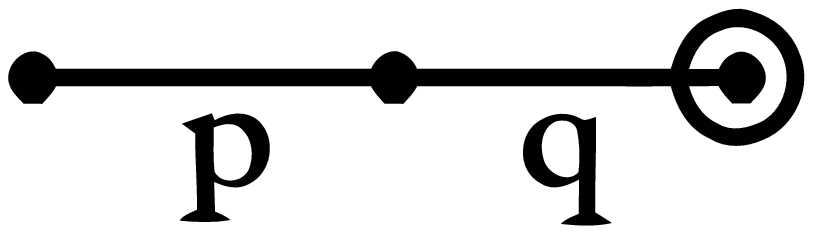}}& \raisebox{-1.8\height}{$P^{2}$}&\raisebox{-1.8\height}{ $(q^p)$ }\\
\hline
\raisebox{-1.8\height}{$\{0,1\}$} & \raisebox{-.78\totalheight}{ \includegraphics*[clip, scale = .45]{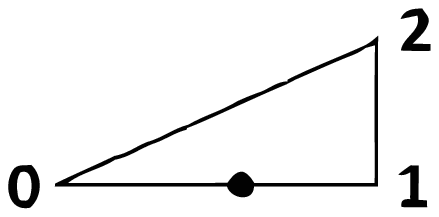}}&\raisebox{-1.15\totalheight}{ \includegraphics*[clip, scale = .25]{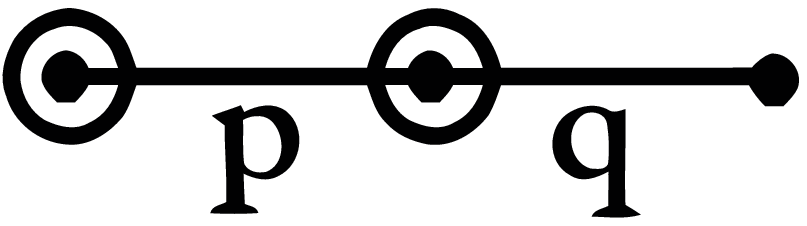}}&\raisebox{-1.8\height}{ $P^{01}$}&\raisebox{-1.8\height}{ $(p.q.q)$ }\\
\hline
\raisebox{-1.8\height}{$\{0,2\}$} & \raisebox{-.78\totalheight}{ \includegraphics*[clip, scale = .45]{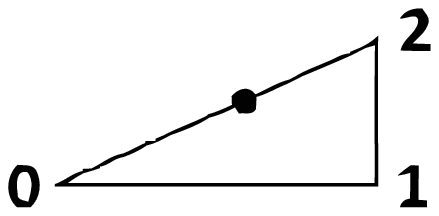}}&\raisebox{-1.15\totalheight}{ \includegraphics*[clip, scale = .25]{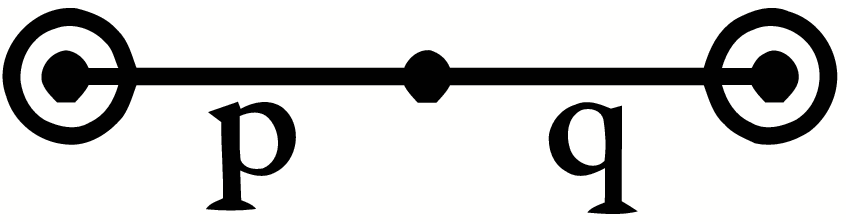}}& \raisebox{-1.8\height}{$P^{02}$} &\raisebox{-1.8\height}{ $(p.4.q.4)$ }\\
\hline
\raisebox{-1.8\height}{$\{1,2\}$} & \raisebox{-.78 \totalheight}{ \includegraphics*[clip, scale = .45]{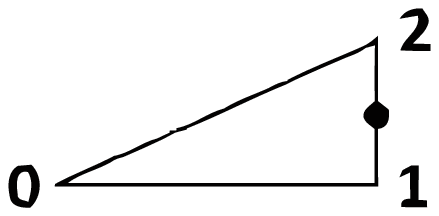}}&\raisebox{-1.15\totalheight}{ \includegraphics*[clip, scale = .25]{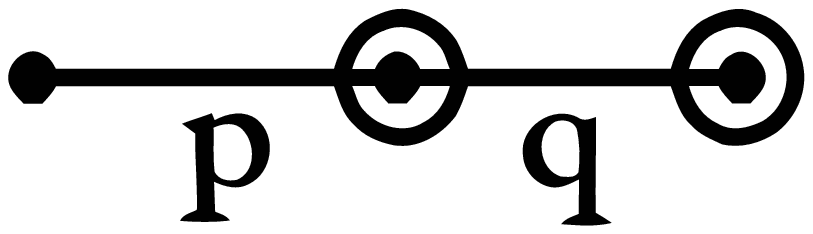}}& \raisebox{-1.8\height}{$P^{12}$} &\raisebox{-1.8\height}{ $(2p.2p.q)$ }\\
\hline
\raisebox{-1.8\height}{$\{0,1,2\}$ }& \raisebox{-.8\totalheight}{ \includegraphics*[clip, scale = .45]{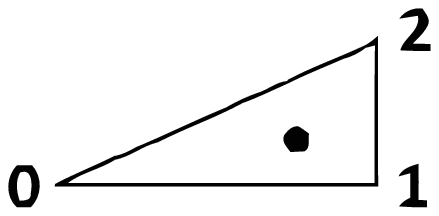}}&\raisebox{-1.15\totalheight}{ \includegraphics*[clip, scale = .25]{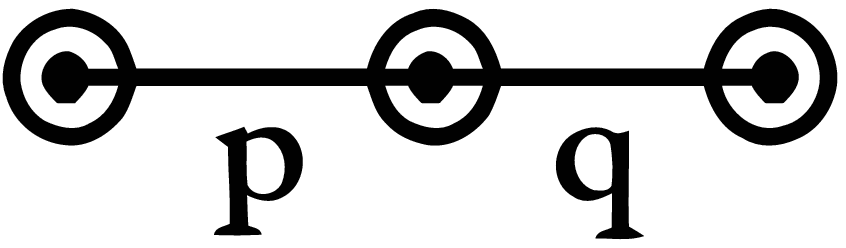}}& \raisebox{-1.8\height}{$P^{012}$}&\raisebox{-1.8\height}{ $(2p.2q.4)$ }\\
\hline
\end{tabular}
\caption{Notation for the Wythoffian, $\mathcal{P}^I$, based on choice of $I$.}
\label{notationtable}
\end{center}
\end{table}

The abstract Wythoffians to be defined will be in one-to-one correspondence with the seven subsets in the partition of $\Phi$ and hence be parametrized by subsets $I$ of $\{0,1,2\}$. Each subset in the partition is characterized as the set of possible locations for the initial vertex of the corresponding Wythoffian; different choices of initial vertices within each subset will produce isomorphic Wythoffians. Thus a subset $I$ indexing an abstract Wythoffian of $\Po$ specifies precisely the generators, namely the generators $\rho_i$ with $i\in I$, under which the corresponding initial vertex is transient. The generators $\rho_i$ with $i\notin I$ then leave the initial vertex invariant; in fact, the choice of initial vertex within the fundamental triangle is such that its stabilizer in $\Gamma(\mathcal{P})$ is precisely given by the subgroup $\langle\rho_{i}\mid i\notin I\rangle$. It then follows that the vertices of the Wythoffian are in one-to-one correspondence with the left cosets of this subgroup in $\Gamma(\Po)$. We write $\Po^I$ for the Wythoffian associated with $I$. 

Table~\ref{notationtable} indicates the seven possible placements for the initial vertex inside the fundamental triangle. 
We have adopted an analogue of Coxeter's~\cite{Cox} diagram notation for truncations of regular convex polyhedra, and have included the corresponding diagrams in the third column; following Coxeter's convention, a node of the diagram for $\Po^I$ is ringed if and only if its label belongs to $I$. We also refer to these diagrams as {\em ringed diagrams}. The fourth column shows the Wythoffian (in Latin letters, for the realizations), where we have written $\Po^i$, $\Po^{ij}$ and $\Po^{ijk}$ in place of $\Po^{\{i\}}$, $\Po^{\{i,j\}}$ or $\Po^{\{i,j,k\}}$, respectively.   The final column gives the vertex symbol for each Wythoffian. These symbols will discussed in further detail in 
Section~\ref{wythoffforregular}.    Note that in the present context the basic, ``unringed" Coxeter diagrams are not generally representing Coxeter groups~\cite[Ch. 3]{SchMc} as for regular convex polyhedra.  Here they are representing symmetry groups of arbitrary regular polyhedra in $\E$. For a regular convex polyhedron, the seven Wythoffians correspond to the seven possible ways of ``truncating" the given polyhedron~\cite{Cox}.

The {\em abstract Wythoffian\/} $\Po^I$ for a given subset $I\subseteq \{0,1,2\}$ then is constructed as follows. Choose a point $v$, the {\em initial vertex\/} of $\Po^I$, inside the fundamental chamber $\Phi$ on $S$ such that $\rho_i(v)\neq v$ for $i\in I$ and $\rho_i(v)=v$ for $i\notin I$. We first generate the base faces for $\Po^I$. Here we need to broaden the term \emph{base $l$-face} to include any $l$-face of $\Po^I$ incident with $v$ whose vertex set on $S$ is the orbit of $v$ under precisely $l$ distinguished generators of $\Gamma(\mathcal{P})$. Unlike in the case of regular polyhedra we now can have up to three different kinds of base $l$-face for $l=1,2$. For instance, when $I=\{0,1,2\}$ there is a base 1-face corresponding to each distinguished generator in $\Gamma(\mathcal{P})$. To fully define the poset of faces obtained by Wythoff's construction we will give explicit definitions of each type of base $l$-face for $l=0,1,2$.

There is only one base 0-face, namely
\begin{equation}
\label{fzero}
F_{0}:=v. 
\end{equation} 
For each $i\in I$ we define the base 1-face 
\begin{equation}
\label{fone}
F_1^{i}:=\{\rho(v)\, |\, \rho\in\langle \rho_i\rangle\}=\{ v,\rho_i(v)\}.
\end{equation} 
Thus the number of base $1$-faces in $\Po^I$ is $|I|$. The base 2-faces of $\Po^I$ will be parametrized by the set $I^2$ of 2-element subsets of $\{0,1,2\}$ given by 
\begin{equation}
\label{itwo}
I^2 := \left\{
\begin{array}{ll}
\{\{i,j\}\,|\, j=i\pm 1\} &\mbox{ if } I=\{i\},i=0,1,2,\\
\{\{0,1\},\{1,2\}\} &\mbox{ if } I=\{0,1\},\{1,2\},\\
\{\{0,1\},\{1,2\},\{0,2\}\} &\mbox{ if } I=\{0,2\},\{0,1,2\}.
\end{array}
\right.
\end{equation}
Now for each $\{i,j\}\in I^2$ we can define a base 2-face 
\begin{equation}
\label{ftwo}
F_2^{ij}:=F_2^{ji}:=\{\rho(F_1^{k})\,|\, \rho\in\langle \rho_i,\rho_j\rangle,\, k\in I\cap \{i,j\}\}.
\end{equation}
The full Wythoffian $\Po^I$ then is the union (taken over $l=0,1,2$) of the orbits of the base $l$-faces under $\Gamma(\Po)$. In $\Po^I$, vertices are points on $S$, edges are 2-element subsets consisting of vertices, and $2$-faces are sets of edges. The partial order between faces of consecutive ranks is given by containment (meaning that the face of lower rank is an element of the face of higher rank), and the full partial order then is the transitive closure. When a least face (of rank~$-1$) and a largest face (of rank $3$) are appended $\Po^I$ becomes an abstract polyhedron. Note that when $\Po$ is locally finite the abstract Wythoffian $\Po^I$ can be realized as a face-to-face tessellation on the surface $S$ in much the same way in which the geometric Wythoffians of the regular plane tessellation $\{4,4\}$ were derived; then edges are simple curves and faces are topological polygons on $S$. 

The Wythoffian $\mathcal{P}^{\{0\}}$ is isomorphic to $\Po$ itself, and $\mathcal{P}^{\{2\}}$ is isomorphic to the dual $\Po^*$ of~$\Po$. Thus both are regular. The Wythoffian $\mathcal{P}^{\{1\}}$ is isomorphic to the medial of $\Po$; it has $p$-gonal faces and $q$-gonal faces, if $\Po$ is of type $\{p,q\}$, and its vertices have valency $4$. Recall that the {\em medial\/} of a polyhedron is a new polyhedron (on the same surface), with vertices at the ``midpoints'' of the old edges and with edges joining two new vertices if these are the midpoints of adjacent edges in an old face (see~\cite{PiRa,PiZi}). 

The two Wythoffians $\Po^{\{0,1\}}$ and $\Po^{\{1,2\}}$ each have two base $2$-faces and thus two kinds of 2-face: $\Po^{\{0,1\}}$ has $2p$-gons and $q$-gons, and $\Po^{\{1,2\}}$ has $p$-gons and $2q$-gons. Each has $3$-valent vertices. On the other hand, $\Po^{\{0,2\}}$ and $\Po^{\{0,1,2\}}$ each have three base $2$-faces and thus three kinds of 2-face:\ $\Po^{\{0,2\}}$ has $p$-gons, $4$-gons and $q$-gons, and $\Po^{\{0,1,2\}}$ has $2p$-gons, $4$-gons and $2q$-gons. The vertices of $\Po^{\{0,2\}}$ are $4$-valent and those of $\Po^{\{0,1,2\}}$ $3$-valent.

Observe that the exchange of indices $0\leftrightarrow 2$ on an index set $I$ for a Wythoffian, results in the index set for the Wythoffian of the dual polyhedron $\Po^*$; that is, if $I^{*}= \{2-i|i\in I\}$ then $\Po^{I^*}=(\Po^*)^I$.  Thus the dual $\Po^*$ has the same set of seven Wythoffians as the original polyhedron $\Po$. Moreover, if $\Po$ is self-dual then the Wythoffians $\Po^I$ and $\Po^{I^*}$ are isomorphic for each $I$. 

It is worth noting that the abstract Wythoffians $\Po^I$ described in this section can be described purely combinatorially without any explicit reference to the underlying surface. This is of little interest when $\Po$ is locally finite, since then the $2$-faces of the Wythoffians are topological polygons with finitely many edges. However, if $\Po$ has apeirogonal $2$-faces or vertex-figures, respectively, the base $2$-face of $\Po^I$ generated from the subgroup $\langle\rho_{0},\rho_1\rangle$ or $\langle\rho_{1},\rho_2\rangle$ of $\Gamma(\Po)$ is an apeirogon and does not bound a disk in $S$. In our applications, while the $2$-faces of $\Po$ may be infinite, the vertex-figures of $\Po$ will always be finite.  In this case, if $\Po$ has apeirogons as $2$-faces then $\Po^I$ also has an apeirogonal $2$-face, except when $I=\{2\}$ and $\Po^I = \Po^*$.

Note that the Wythoffians of an abstract polyhedron $\Po$ can also described in terms of the monodromy group of the polyhedron (see~\cite{Mon}). In the case of a regular polyhedron $\Po$, the monodromy group and automorphism group are isomorphic and either can be chosen to define the Wythoffians $\Po^I$. However, as we will work in a geometric context where automorphisms become isometries, we have adopted an automorphism based approach to Wythoffians.

\section{Wythoffians of geometric polyhedra}
\label{geowyth}

In this section we discuss Wythoffians for geometrically {\em regular\/} polyhedra $P$ in $\E$. In particular, we explain how an abstract Wythoffian associated with $P$ as an abstract polyhedron, can often itself be realized faithfully in $\E$ in such a way that all combinatorial symmetries of $P$ are realized as geometric symmetries, and thus be viewed as a geometric Wythoffian of~$P$. In fact, whenever a realization exists there are generally many such realizations. For a point $u\in\E$ we let $G_{u}(P)$ denote the stabilizer of $u$ in $G(P)$.

The key idea is to place the initial vertex for the realization inside a specified fundamental region of the symmetry group $G(P)$ in $\E$ and then let Wythoff's construction applied with the generating reflections of $G(P)$ produce the desired geometric Wythoffian. The precise construction is detailed below. The fundamental region of $G(P)$ can be quite complicated and is generally not a simplicial cone as for the Platonic solids. The generating symmetries $r_0,r_1,r_2$ of $G(P)$ corresponding to the abstract symmetries $\rho_0,\rho_1,\rho_2$ of $\Gamma(P)$ are involutory isometries in $\E$ and therefore are point reflections, halfturns (line reflections), or plane reflections, with mirrors of dimension $0$, $1$ or $2$, respectively. In order to realize an abstract Wythoffian $P^I$ (with $I\subseteq \{0,1,2\}$) of the given geometric polyhedron $P$ in $\E$, the initial vertex $v$ must be chosen such that 
\begin{equation}
\label{inipla}
G_{v}(P) = \langle r_{i}\mid i\notin I\rangle.
\end{equation} 
This {\it initial placement condition\/} will allow us to construct a {\it faithful} realization of $P^I$. In fact, (\ref{inipla}) is a necessary and sufficient condition for the existence of a faithful realization of $P^I$ in $\E$ which is induced by the given realization of $P$ in the sense that all geometric symmetries of $P$ are also geometric symmetries of $P^I$. Note that condition (\ref{inipla}) implies the more easily verifiable condition
\begin{equation}
\label{iniplaforr}
r_i(v)\neq v\;(i\in I),\;\;r_i(v)=v\;(i\notin I),
\end{equation}
which for specific points $v$ usually is equivalent to (\ref{inipla}).

The shape of the geometric Wythoffians will vary greatly with the choice of initial vertex. Our assumption that $v$ be chosen inside the fundamental region for $G(P)$ is, strictly speaking, not required. 
The initial placement condition for $v$ alone guarantees that a faithful realization of $P^I$ can be found by Wythoff's construction. However, if the initial vertex $v$ is chosen inside the fundamental region, then the original polyhedron $P$ and its Wythoffian $P^I$ are similar looking in shape and so their intrinsic relationship is emphasized. 

By the very nature of the construction, Wythoffians are vertex-transitive and have vertex-transitive faces. If the faces are actually regular polygons, then the Wythoffian is a geometrically uniform polyhedron in $\E$.

\subsection{Regular polyhedra in $\E$}

We briefly review the classification of the geometrically regular polyhedra in $\E$ following the classification scheme of \cite[Sect.~7E]{SchMc} (or \cite{McSch1997}). There are 48 such regular polyhedra, up to similarity and scaling of components (if applicable):\ 18 finite polyhedra, 6 planar apeirohedra, 12 blended apeirohedra, and 12 pure (non-blended) apeirohedra. They are also known as the Gr\"unbaum-Dress polyhedra.

The finite regular polyhedra comprise the five Platonic solids $\{3,3\}$, $\{3,4\}$, $\{4,3\}$, $\{3,5\}$, $\{5,3\}$ and the four Kepler-Poinsot star-polyhedra $\{3,\frac{5}{2}\}$, $\{\frac{5}{2},3\}$, $\{5,\frac{5}{2}\}$, $\{\frac{5}{2},5\}$, where the faces and vertex-figures are planar but are permitted to be star polygons (the entry $\frac{5}{2}$ indicates pentagrams as faces or vertex-figures); and the Petrie-duals of these nine polyhedra. 

The planar regular apeirohedra consist of the three regular plane tessellations $\{4,4\}$, $\{3,6\}$ and $\{6,3\}$, and their Petrie-duals $\{\infty,4\}_4$, $\{\infty,6\}_3$ and $\{\infty,3\}_6$, respectively.

There are twelve regular apeirohedra that are ``reducible'' and have components that are lower-dimensional regular figures. These apeirohedra are {\it blends\/} of a planar regular apeirohedron $P$ and a line segment $\{\,\}$ or linear apeirogon $\{\infty\}$. The notion of a blend used in this context is a variant of the notion of a blend of two realizations of abstract polyhedra described earlier (but is technically not the same). The formal definition is as follows. We let $P'$ denote the line segment $\{\,\}$ or the linear apeirogon $\{\infty\}$.

Suppose the symmetry groups of $P$ and $P'$ are, respectively, $G(P)=\langle r_0,r_1,r_2\rangle$ and $G(P')=\langle r_{0}'\rangle$ or $\langle r_{0}',r_{1}'\rangle$. For our purposes, $G(P)$ acts on a plane in $\mathbb{E}^3$ while $G(P')$ acts on a line perpendicular to that plane; in particular, these two groups commute at the level of elements. The blending process requires us first to take the direct product of the groups, $G(P)\times G(P')$, viewed as a subgroup of the full isometry group of $\E$. The new regular apeirohedron, the {\em blend\/} $P\# P'$, then is obtained from the subgroup of $G(P)\times G(P')$ generated by the set of involutions 
\[ (r_0,r_{0}'),(r_1,1),(r_2,1)\]
or 
\[ (r_0,r_{0}'),(r_1,r_{1}'),(r_2,1),\] 
respectively; this subgroup is the symmetry group of the blend, and the involutions are the distinguished generators. Thus $G(P\# P')$ is a subgroup of $G(P)\times G(P')$. In particular, if the plane of $P$ and line of $P'$ meet at the origin, and $v$ and $v'$ are the initial vertices of $P$ and $P'$ for Wythoff's construction, then the point $(v,v')$ in $\E$ is the initial vertex of the blend. More explicitly, the blend, $P\# \{\,\}$, of $P$ and $\{\,\}$ has symmetry group $\langle (r_0,r_{0}'),(r_1,1),(r_2,1)\rangle$ while the blend, $P\#\{\infty\}$, of $P$ and $\{\infty\}$ has symmetry group $\langle (r_0,r_{0}'),(r_1,r_{1}'),(r_2,1)\rangle$. Throughout we will simplify the notation from $(r,r')$ to $rr'$ for an element of $G(P) \times G(P')$. 

For example, the blend of the standard square tessellation $\{4,4\}$ and the linear apeirogon $\{\infty\}$, denoted $\{4,4\}\#\{\infty\}$, is an apeirohedron in $\E$ whose faces are helical apeirogons (over squares), rising as ``spirals'' above the squares of $\{4,4\}$ such that $4$ meet at each vertex; the orthogonal projections of $\{4,4\}\#\{\infty\}$ onto their component subspaces recover the original components, that is, the square tessellation and the linear apeirogon. Each blended apeirohedron represents an entire family of apeirohedra of the same kind, where the apeirohedra in a family are determined by a parameter describing the relative scale of the two component figures; our count of 12 refers to the 12 kinds rather than the individual apeirohedra.

Finally there are twelve regular apeirohedra that are ``irreducible'', or {\it pure\/} (non-blended). These are listed in Table~\ref{tabpure} (see \cite[p. 225]{SchMc}). The first column gives the {\it mirror vector\/} of an apeirohedron; its components, in order, are the dimensions of the mirrors of the generating symmetries $r_0$, $r_1$ and $r_2$ of $G(P)$ (this is the dimension vector of \cite[Ch. 7E]{SchMc}). The last two columns say whether the faces and vertex-figures are planar, skew, or helical regular polygons. In the second, third, and fourth columns, the (rotation or full) symmetry group of the Platonic solid at the top indexing that column is closely related to the special group of each apeirohedron listed below it; the {\em special group\/} is the quotient of the symmetry group by the translation subgroup. The three polyhedra in the first row are the well-known Petrie-Coxeter polyhedra (see \cite{Cox7}), which along with those in the third row comprise the pure regular polyhedra with finite faces. The pure polyhedra with helical faces are listed in the second and last row. Infinite zigzag polygons do not occur as faces of pure polyhedra.

The fine Schl\"afli symbol used to designate a polyhedron signifies extra defining relations for the symmetry group (see \cite[Ch. 7E]{SchMc}). For example, the parameters $l$, $m$ and $n$ in the symbols $\{p,q\}_l$, $\{p,q\}_{l,m}$ and $\{p,q\,|\,n\}$ indicate the relations $(r_{0}r_{1}r_{2})^{l}=1$, ${(r_{0}(r_{1}r_{2})^{2})}^{m}=1$, or $(r_{0}r_{1}r_{2}r_{1})^{n}=1$, respectively; together with the standard Coxeter relations they form a presentation for the symmetry group of the corresponding polyhedron. Note that $l$, $m$ and~$n$, respectively, give the lengths of the Petrie polygons (1-zigzags), the $2$-zigzags (paths traversing edges where the new edge is chosen to be the second on the right, but reversing orientation on each step, according to some local orientation on the underlying surface), and the holes (paths traversing edges where the new edge is chosen to be the second on the right on the surface).

\begin{table}[htb]
\begin{center}
\begin{tabular}{|c||ccc|c|c|}
\hline
mirror vector& $\{3,3\}$ & $\{3,4\}$ & $\{4,3\}$ &faces&vertex-figures\\
\hline \hline
(2,1,2) \,&\, $\{6,6 | 3\}$ \,&\, $\{6,4 | 4\}$ \,&\, $\{4,6 | 4\}$ \,&\,planar\,&\,skew\, \\[.05in]
(1,1,2) \,&\, $\{\infty,6\}_{4,4}$ \,&\, $\{\infty,4\}_{6,4}$ \,&\, $\{\infty,6\}_{6,3}$ \,&\,helical\,&\,skew\, \\[.05in]
(1,2,1) \,&\, $\{6,6\}_{4}$ \,&\, $\{6,4\}_{6}$ \,&\, $\{4,6\}_{6}$\,&\,skew\,&\,planar\, \\[.05in]
(1,1,1) \,&\, $\{\infty,3\}^{(a)}$ \,&\, $\{\infty,4\}_{\cdot,*3}$ \,&\,
$\{\infty,3\}^{(b)}$ \,&\,helical\,&\,planar\, \\[.05in]
\hline
\end{tabular}
\end{center}
\caption{The pure apeirohedra in $\E$}
\label{tabpure}
\end{table}

\subsection{Geometric Wythoffians of regular polyhedra}
\label{wythoffforregular}

Let $P$ be a geometrically regular polyhedron in $\E$ with symmetry group $G(P)=\langle r_0,r_1,r_2\rangle$, and let $I\subseteq \{0,1,2\}$. If we write $R_i$ for the mirror of a distinguished 
generator~$r_i$ in $\E$, and $\overline{X}$ for the complement in $\E$ of a subset $X$ of $\E$, then the weaker form (\ref{iniplaforr}) of the initial placement condition in (\ref{inipla}) for the initial vertex $v$ is equivalent to requiring that $v$ lies in 
\begin{equation}
\label{iniset}
M_{I}\;:=\;\,\bigcap_{i\in I} \overline{R_i} \;\,\cap\;\,\bigcap_{i\notin I} R_i \,.
\end{equation}
Thus, if $\widehat{M}_{I}$ denotes the set of permissible choices of initial vertices $v$, then $\widehat{M}_{I}$ is a subset of~$M_I$.  In general we would expect the complement of $\widehat{M}_{I}$ in $M_I$ to be ``small".  If $I\neq \{0,1,2\}$ the affine hull of $M_I$ is a proper affine subspace of $\E$ given by $\bigcap_{i\notin I} R_i$; in this case, $M_I$ must lie in a plane. If $I= \{0,1,2\}$ the affine hull of $M_I$ is $\E$.

Now suppose the initial vertex $v$ is chosen in $\widehat{M}_{I}$ (and also lies in a specified fundamental region of $G(P)$ in $\E$). To construct the geometric Wythoffians of $P$ we follow the same pattern as in (\ref{fzero}), (\ref{fone}), (\ref{itwo}) and (\ref{ftwo}). We often write $P^I(v)$ in place of $P^I$ in order to emphasize the fact that $P^I$ is generated from $v$. The base 0-face of $P^I(v)$ is again given by 
\begin{equation}
\label{fzerogeo}
F_{0}:=v.
\end{equation} 
For each $i\in I$ there is a base 1-face, 
\begin{equation}
\label{fonegeo}
F_1^{i}:= (v,r_i(v)), 
\end{equation} 
which is a line segment; and for each $\{i,j\}\in I^2$ there is a base 2-face, 
\begin{equation}
\label{ftwogeo}F_2^{ij}:=F_2^{ji}:=\{r(F_1^{k})\,|\, r\in\langle r_i,r_j\rangle,\, k\in I\cap \{i,j\}\},
\end{equation}
which forms a finite or infinite polygon according as $\langle r_i,r_j\rangle$ is a finite or infinite dihedral group. The full {\em geometric Wythoffian\/} $P^I(v)$ then is the union (taken over $l=0,1,2$) of the orbits of the base $l$-faces under $G(P)$. 

We often write $P^i(v)$, $P^{ij}(v)$, $P^{ijk}(v)$ in place of $P^{\{i\}}(v)$, $P^{\{i,j\}}(v)$ or $P^{\{i,j,k\}}(v)$, respectively, and similarly without $v$ as qualification. 

Observe that our construction of geometric Wythoffians always uses a geometrically regular polyhedron in $\E$ as input and then produces from it a realization of its abstract Wythoffian. Thus the pair of abstract polyhedra $(\Po,\Po^I)$ is simultaneously realized in $\E$ as a pair of geometric polyhedra $(P,P^I)$. The following lemma shows that the geometric Wythoffians are indeed faithful realizations of the abstract Wythoffians. 

\begin{lemma}
\label{faith}
Let $P$ be a geometrically regular polyhedron in $\E$, and let $I\subseteq \{0,1,2\}$. Then for each $v\in\widehat{M}_{I}$ the geometric Wythoffian $P^I(v)$ is a faithful realization of the abstract Wythoffian $\Po^I$.
\end{lemma}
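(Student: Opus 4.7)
\emph{Proof plan.} The strategy is to exploit the natural group isomorphism $\Gamma(\Po)\to G(P)$ given by $\rho_j\mapsto r_j$ (which exists because $P$ is geometrically regular) to transfer the orbit-theoretic description of the abstract Wythoffian $\Po^I$ to the geometric Wythoffian $P^I(v)$, and then verify rank by rank that the realization maps $\beta_0,\beta_1,\beta_2$ are injective.

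First I would handle vertices. Abstractly, the vertex set of $\Po^I$ is the $\Gamma(\Po)$-orbit of the initial point chosen inside the fundamental triangle on $S$, which by orbit-stabilizer is in bijection with the coset space $\Gamma(\Po)/\langle\rho_i\mid i\notin I\rangle$. Geometrically, the vertex set of $P^I(v)$ is the $G(P)$-orbit of $v$, in bijection with $G(P)/G_v(P)$. The assumption $v\in\widehat{M}_I$ is precisely condition (\ref{inipla}), which says $G_v(P)=\langle r_i\mid i\notin I\rangle$; hence the group isomorphism $\rho_j\mapsto r_j$ descends to an equivariant bijection between the two coset spaces, and this bijection is $\beta_0$.

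Next I would handle the 1-faces and 2-faces together. The base $l$-faces were defined in (\ref{fone})--(\ref{ftwo}) as orbits of certain subgroups of $\Gamma(\Po)$ acting on $v$, and in (\ref{fonegeo})--(\ref{ftwogeo}) by literally the same formulas in the geometric setting with $\rho_j$ replaced by $r_j$. For $i\in I$, (\ref{inipla}) forces $r_i(v)\neq v$, so each base edge is a genuine two-point line segment; for $\{i,j\}\in I^2$, the corresponding base 2-face is a polygon, finite or infinite according as the dihedral group $\langle r_i,r_j\rangle$ is finite or infinite, matching the abstract case exactly because the group isomorphism preserves the orders of $\rho_i\rho_j$ and $r_ir_j$. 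The setwise stabilizer of any base $l$-face is built from the reflections permuting its vertices together with the pointwise stabilizer of its vertex set, all described purely in terms of the $\rho_j$ (or $r_j$) and the subgroups $\langle\rho_i\mid i\notin I\rangle$ (or $\langle r_i\mid i\notin I\rangle$). Because these subgroups correspond under the isomorphism, so do the setwise stabilizers, and orbit-stabilizer then converts the abstract enumeration onto each orbit into its geometric counterpart, yielding bijectivity of $\beta_1$ and $\beta_2$.

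The step I expect to require the most care is the last one: ruling out accidental coincidences among distinct geometric base 1-faces with different $i\in I$, or among distinct geometric base 2-faces with different $\{i,j\}\in I^2$, and ruling out the possibility that a single base 2-face has extra geometric setwise symmetry beyond that predicted by its abstract stabilizer. Both phenomena are excluded by the same mechanism, namely that the isomorphism $\Gamma(\Po)\cong G(P)$ equates vertex stabilizers — a direct consequence of (\ref{inipla}) — and therefore equates all higher setwise stabilizers built from them via the explicit formulas (\ref{fone})--(\ref{ftwo}) and (\ref{fonegeo})--(\ref{ftwogeo}). Once this stabilizer bookkeeping is checked, the three injectivity statements combine to give the desired faithfulness of $P^I(v)$ as a realization of $\Po^I$.
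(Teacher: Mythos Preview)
Your proposal is correct and follows essentially the same approach as the paper: establish the vertex bijection via the coset correspondence $\Gamma(\Po)/\langle\rho_i\mid i\notin I\rangle \cong G(P)/\langle r_i\mid i\notin I\rangle$ using condition~(\ref{inipla}), and then extend to edges and faces because both Wythoffians are built from their vertices by identical formulas under the group isomorphism $\rho_j\mapsto r_j$. The only difference is one of detail: the paper dispatches the extension to $1$- and $2$-faces in a single sentence (``the base edges and base faces are entirely determined by their vertices, and the overall construction method for the two polyhedra is the same''), whereas you spell out the stabilizer bookkeeping and the exclusion of accidental coincidences more explicitly.
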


\begin{proof}
The initial placement condition in (\ref{inipla}) for $v$ implies that there is a one-to-one correspondence between the vertices of $\Po^I$ and $P^I(v)$. In fact, by construction, the vertices of $\Po^I$ and $P^I(v)$, respectively, are in one-to-one correspondence with the left cosets of the stabilizers of the initial vertices in $\Gamma(\Po)$ or $G(P)$, which are given by $\langle\rho_{i}\mid i\notin I\rangle$ and $G_{v}(P)$. But the group isomorphism between $\Gamma(\Po)$ and $G(P)$ naturally takes the vertex stabilizer $\langle\rho_{i}\mid i\notin I\rangle$ to $\langle r_{i}\mid i\notin I\rangle$, and by (\ref{inipla}) the latter subgroup of $G(P)$ coincides with $G_{v}(P)$. Thus there is a bijection between the two vertex sets, and the number of vertices is the index of $\langle r_{i}\mid i\notin I\rangle$ in $G(P)$.

As for both the abstract and geometric Wythoffian the base edges and base faces are entirely determined by their vertices, and the overall construction method for the two polyhedra is the same, the one-to-one correspondence between the two vertex sets extends to an isomorphism between the two polyhedra.\hfill$\Box$
\end{proof}

By construction, the Wythoffian $P^I(v)$ inherits all geometric (and combinatorial) symmetries of $P$ and is (trivially) vertex-transitive under $G(P)$. Thus all vertices are surrounded alike (and in particular in the same way as $v$). Following standard notation for classical Archimedean solids and tilings we will introduce a vertex symbol for $P^I(v)$ that describes the neighborhood of a vertex and hence collects important local data. 

Let $u$ be a vertex of $P^I(v)$ of valency $k$, let $G_1,\ldots, G_k$ (in cyclic order) be the $2$-faces containing $u$, and let $G_j$ be a $q_j$-gon for $j=1,\ldots,k$ (with $q_j=\infty$ if $G_j$ is an apeirogon). Then we call $(G_1,G_2,G_3,\ldots,G_k)$ and $(q_1.q_2.q_3\,\ldots\,q_k)$ the {\em vertex configuration\/} and {\em vertex symbol\/} of $P^I(v)$ at $u$, respectively. The vertex configuration and vertex symbol at a vertex are determined up to cyclic permutation and reversal of order. By the vertex-transitivity, the vertex symbols of $P^I(v)$ at different vertices are the same and so we can safely call the common symbol the vertex symbol of $P^I(v)$ (or $P^I$). If a vertex symbol contains a string of $m$ identical entries,~$q$, we simply shorten the string to $q^m$. 

 As we will see there are several instances where certain abstract Wythoffians of geometrically regular polyhedra cannot be realized as geometric Wythoffians. This is already true for the geometrically regular polyhedra themselves. Not every geometrically regular polyhedron has  a geometrically regular polyhedron as a dual. There are several possible obstructions to this. If the original polyhedron has infinite faces, then the dual would have to have vertices of infinite valency, which is forbidden by our discreteness assumption. Thus local finiteness is a necessary condition for pairs of geometric duals to exist. However, local finiteness is not a sufficient condition. For example, the (abstract) dual of the Petrie dual of the cube, $\{3,6\}_4$, cannot be realized as a geometric polyhedron in $\E$ while the Petrie dual of the cube itself, $\{6,3\}_4$, is one of the finite regular polyhedra in $\E$. The abstract polyhedron $\{3,6\}_4$ is a triangulation of the torus; since its edges have multiplicity $2$, they cannot be geometrically represented by straight line segments in $\E$. Geometric polyhedra must necessarily have a simple edge graph.

In practice we often employ a padded vertex symbol to describe the finer geometry of the vertex-configuration. We use symbols like $p_c$, $p_s$, $\infty_k$, or $t\infty_2$, respectively, to indicate that the faces are (not necessarily regular) \underbar{c}onvex $p$-gons, \underbar{s}kew $p$-gons, helical polygons over \underbar{$k$}-gons, or truncated planar zigzag. (The $k=2$ describes a planar zigzag viewed as a helical polygon over a ``$2$-gon'', where here a $2$-gon is a line segment traversed in both directions.  A truncated planar zigzag is obtained by cutting off the vertices of a planar zigzag, while maintaining segments of the old edges as new edges.) There are other shorthands that we introduce when they occur. For example, a symbol like $(8_s^2.6_c.3^2.6_s)$ would say that each vertex is surrounded (in cyclic order) by a skew octagon, another skew octagon, a convex hexagon, a triangle, another triangle, and a skew hexagon. 

We should point out that there are uniform skeletal polyhedra that cannot occur as geometric Wythoffians of regular polyhedra in $\E$. The simplest example is the snub cube, which is an Archimedean solid whose symmetry group is the octahedral rotation group and hence does not contain plane reflections. On the other hand, all 18 finite regular polyhedra and thus their geometric Wythoffians have reflection groups as symmetry groups. Note that the snub cube can be derived by Wythoff's construction from the octahedral rotation group rather than the full octahedral group. 

The geometrically chiral polyhedra in $\E$ are also examples of uniform skeletal polyhedra that cannot arise as geometric Wythoffians of regular polyhedra (see \cite{SchChiral1,SchChiral2}). This immediately follows from a comparison of the structure of the faces and the valencies of the vertices for the Wythoffians and chiral polyhedra, except possibly when $I=\{0\}$ or $\{2\}$. In these two cases the Wythoffians are regular and thus cannot coincide with a chiral polyhedron.

We have not yet fully explored the ``snub-type'' polyhedra that arise from regular skeletal polyhedra $P$ via Wythoff's construction applied to the ``rotation subgroup" $G^+(P)$ of the symmetry group $G(P)$. 
This subgroup is generated by the symmetries $r_0r_1,r_1r_2$ and consists of all symmetries of $P$ that realize combinatorial rotations of $P$; that is, $G^+(P)$ is the image of the combinatorial rotation subgroup $\Gamma^{+}(P):=\langle \rho_0\rho_1,\rho_1\rho_2\rangle$ of $\Gamma(P)=\langle \rho_0,\rho_1,\rho_2\rangle$ under the representation $\Gamma(P)\mapsto G(P)$ in $\E$. Note that $r_0r_1$ and $r_1r_2$ may not actually be proper isometries and hence $G^+(P)$ may not only consist of proper isometries.

A similar remark also applies to possible geometric ``snub-type'' Wythoffians of the chiral polyhedra in~$\E$. 

\section{The Wythoffians of various regular polyhedra}
\label{wythvarious}

In this section we treat the Wythoffians of a number of distinguished classes of regular polyhedra in $\E$, including in particular the  four finite polyhedra with octahedral symmetry and various families of apeirohedra (the two planar and the two blended apeirohedra derived from the square tiling, as well as the three Petrie-Coxeter polyhedra). As the fundamental regions of the symmetry groups vary greatly between the various kinds of polyhedra, we address the possible choices of initial vertices in the subsections. The geometric shape of a geometric Wythoffian will greatly depend on the choice of initial vertex, and different choices may lead to geometric Wythoffians in which corresponding faces look quite differently and may be planar versus skew.  Figures of distinguishing features of the resulting Wythoffians are included; the pictures show the base faces in different colors. 

We leave the analysis of the Wythoffians for the remaining classes of regular polyhedra to the subsequent paper~\cite{Williams2} by the second author. 

\subsection{Finite polyhedra with octahedral symmetry}

There are four regular polyhedra in $\E$ with an octahedral symmetry group:\ the octahedron $\{3,4\}$ and cube $\{4,3\}$, and their Petrie-duals $\{6,4\}_3$ and $\{6,3\}_4$, respectively. The octahedron and cube produce familiar figures as Wythoffians each related to an Archimedean solid (see~\cite{Senechal}), but already their Petrie duals produce interesting new structures. The sets of distinguished generators for the four individual symmetry groups can all be expressed in terms of the set for the octahedron $\{3,4\}$. We write $G(\{3,4\})=\langle s_0,s_1,s_2\rangle$, where $s_0,s_1,s_2$ are the distinguished generators. All of the initial vertices used for Wythoffians with octahedral symmetry are chosen from within the standard fundamental region of the octahedral group, which is a closed simplicial cone bounded by the reflection planes of $s_0$, $s_1$ and $s_2$.  (Recall our previous remark about the notion of fundamental simplex, or in this case, fundamental simplicial cone.) Each of the Wythoffians in this section is related to an Archimedean solid, as the figures will show. In fact, for the Wythoffians of the convex regular polyhedra we can choose the initial vertex so that the resulting polyhedron is uniform.  That is not the case with all skeletal regular polyhedra.  For example, the Wythoffians $P^{02}$ and $P^{012}$ derived from $\{6,4\}_3$ cannot be uniform, though we can still see a relationship between them and the Archimedean solids. 

The Wythoffians of the octahedron are shown in Figure \ref{octahedron}. The first Wythoffian, $P^0$, is the regular octahedron $\{3,4\}$ itself. The Wythoffian $P^1$ is a uniform cuboctahedron. Examining $P^2$ we get the dual to the octahedron, the regular cube. The Wythoffian $P^{01}$ is a polyhedron which is isomorphic to the truncated octahedron. For a particular choice of initial vertex $P^{01}$ is the uniform truncated octahedron. The polyhedron $P^{02}$ is isomorphic to the rhombicuboctahedron, and for a carefully chosen initial vertex $P^{02}$ is the uniform rhombicuboctahedron. For $P^{12}$ Wythoff's construction yields a polyhedron isomorphic to the truncated cube which for a specifically chosen initial vertex is the uniform truncated cube. The Wythoffian $P^{012}$ is isomorphic to the truncated cuboctahedron, and for a certain initial vertex is the uniform truncated cuboctahedron. 

\begin{figure}[h]
\begin{center}
\begin{tabular}{cccc}
\raisebox{-\totalheight}{ \includegraphics*[clip, scale = .2]{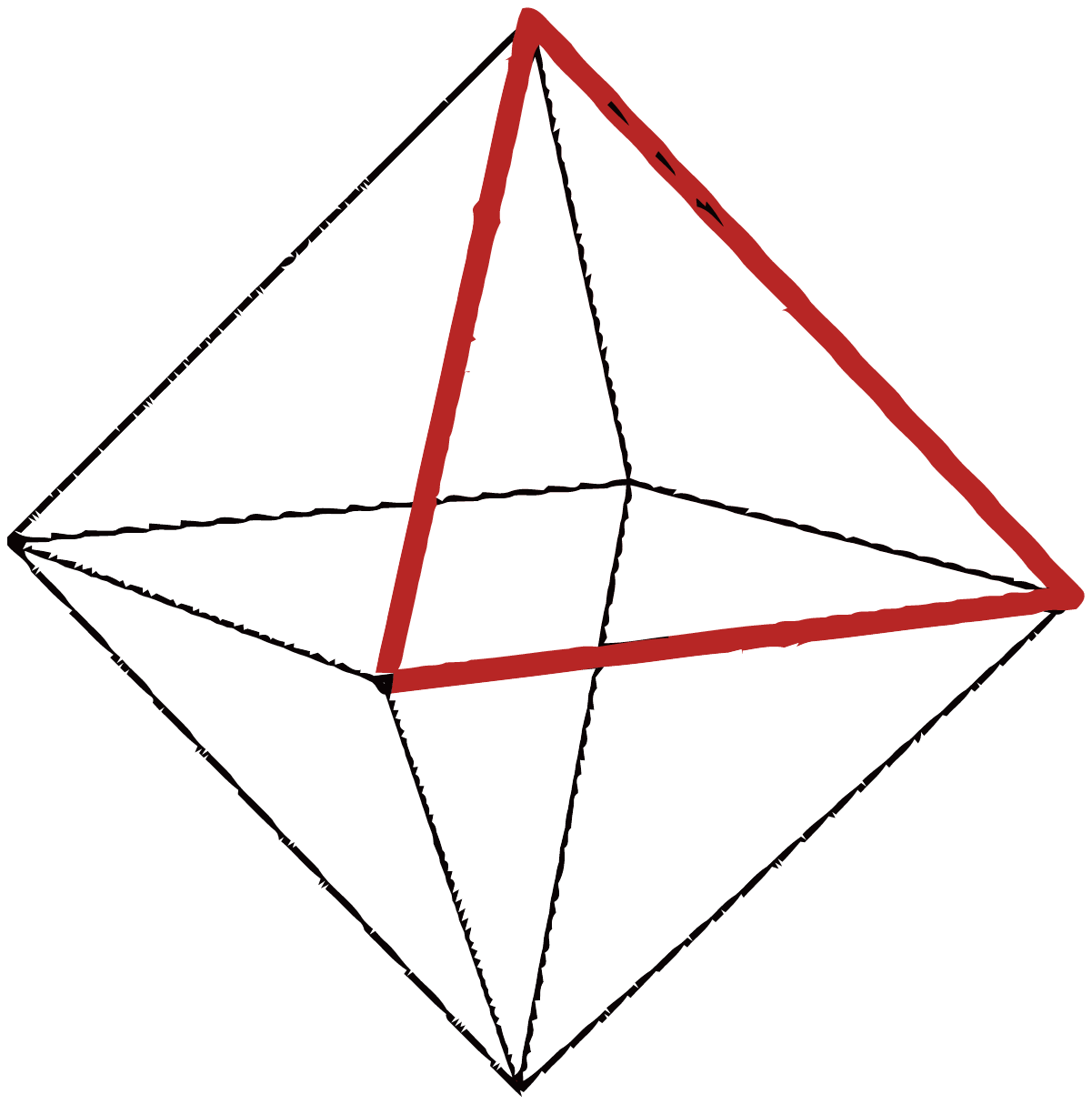}}& \raisebox{-\totalheight}{ \includegraphics*[clip, scale = .2]{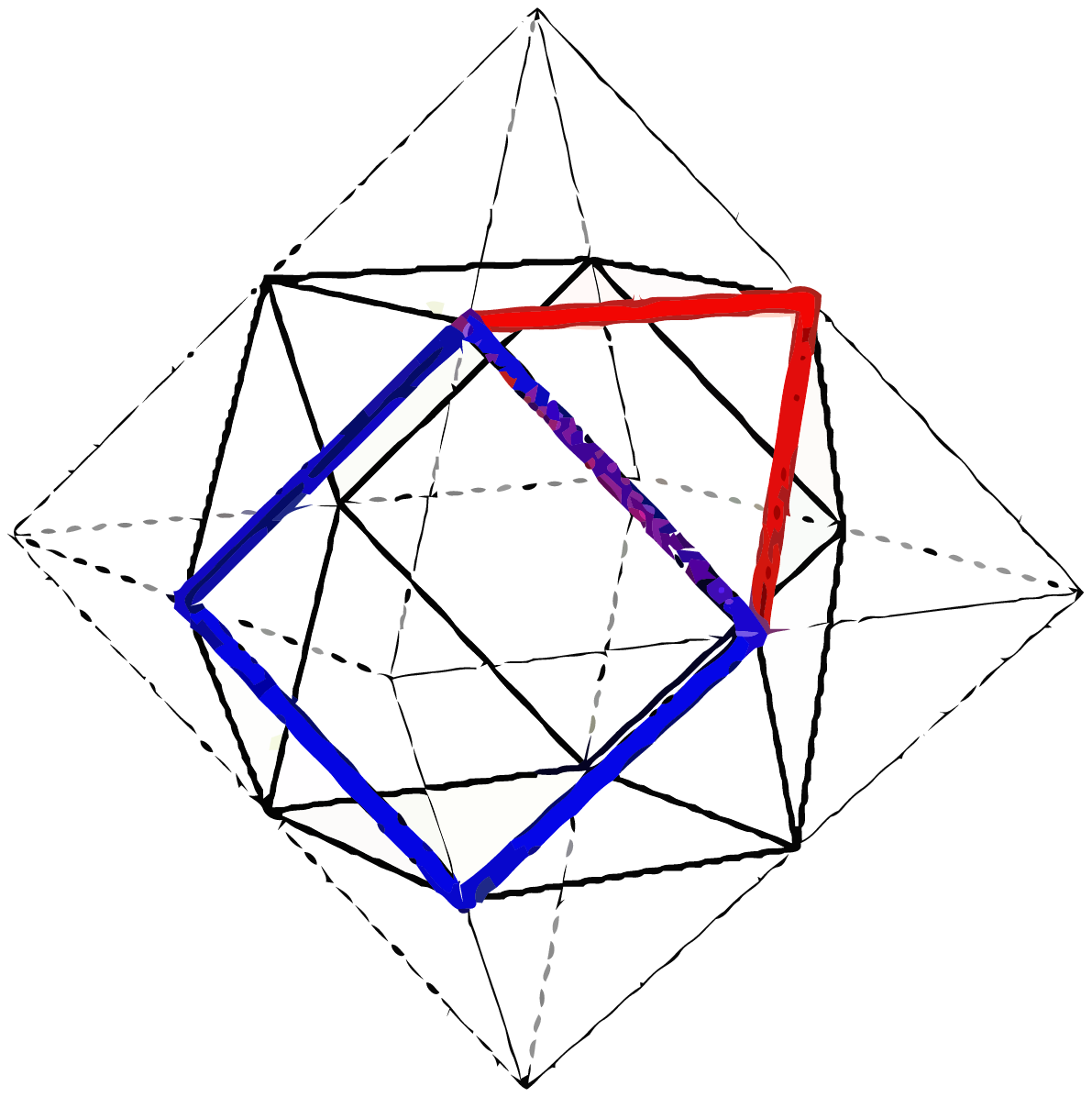}}& \raisebox{-\totalheight}{ \includegraphics*[clip, scale = .2]{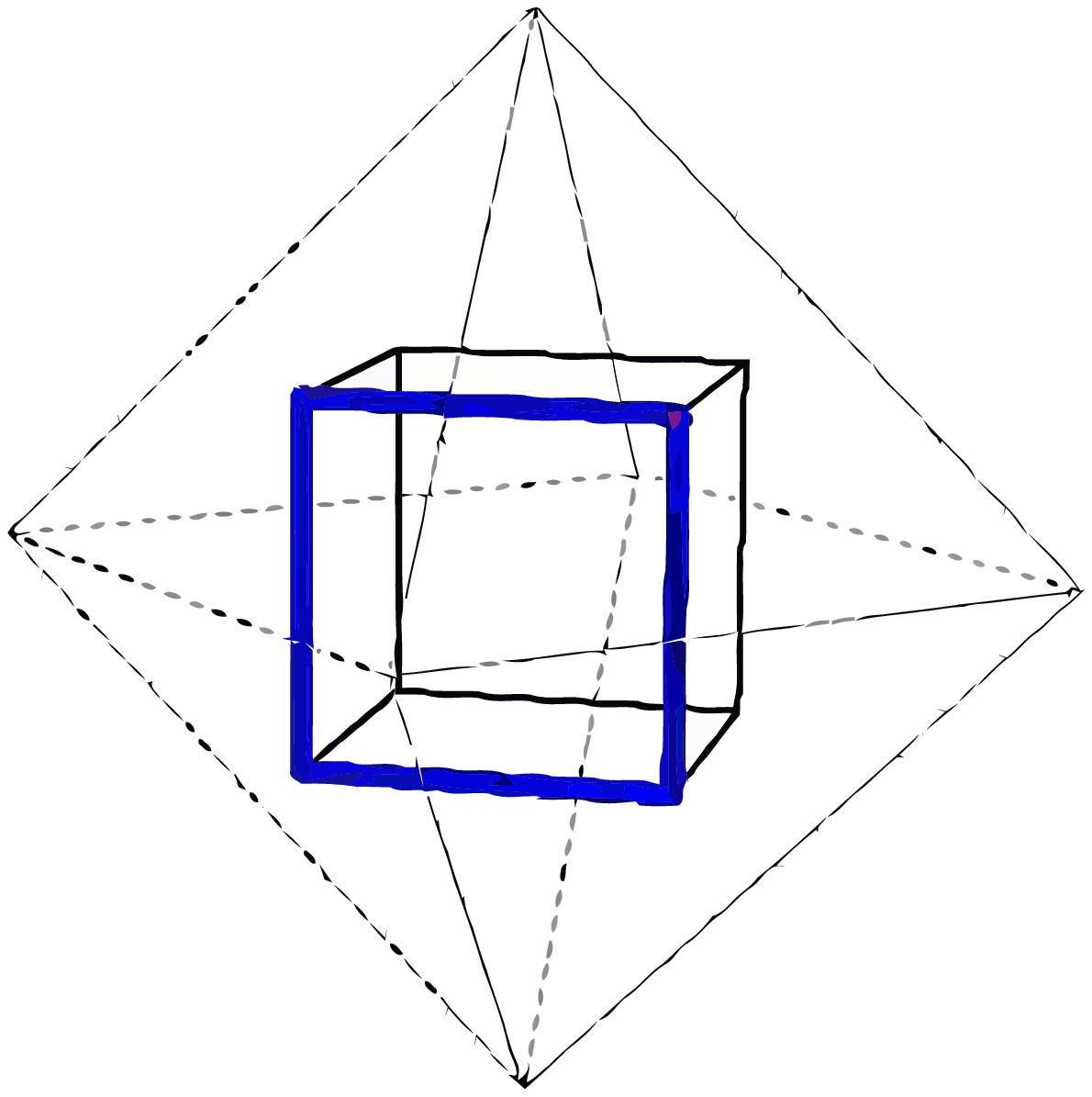}}&\raisebox{-\totalheight}{ \includegraphics*[clip, scale = .2]{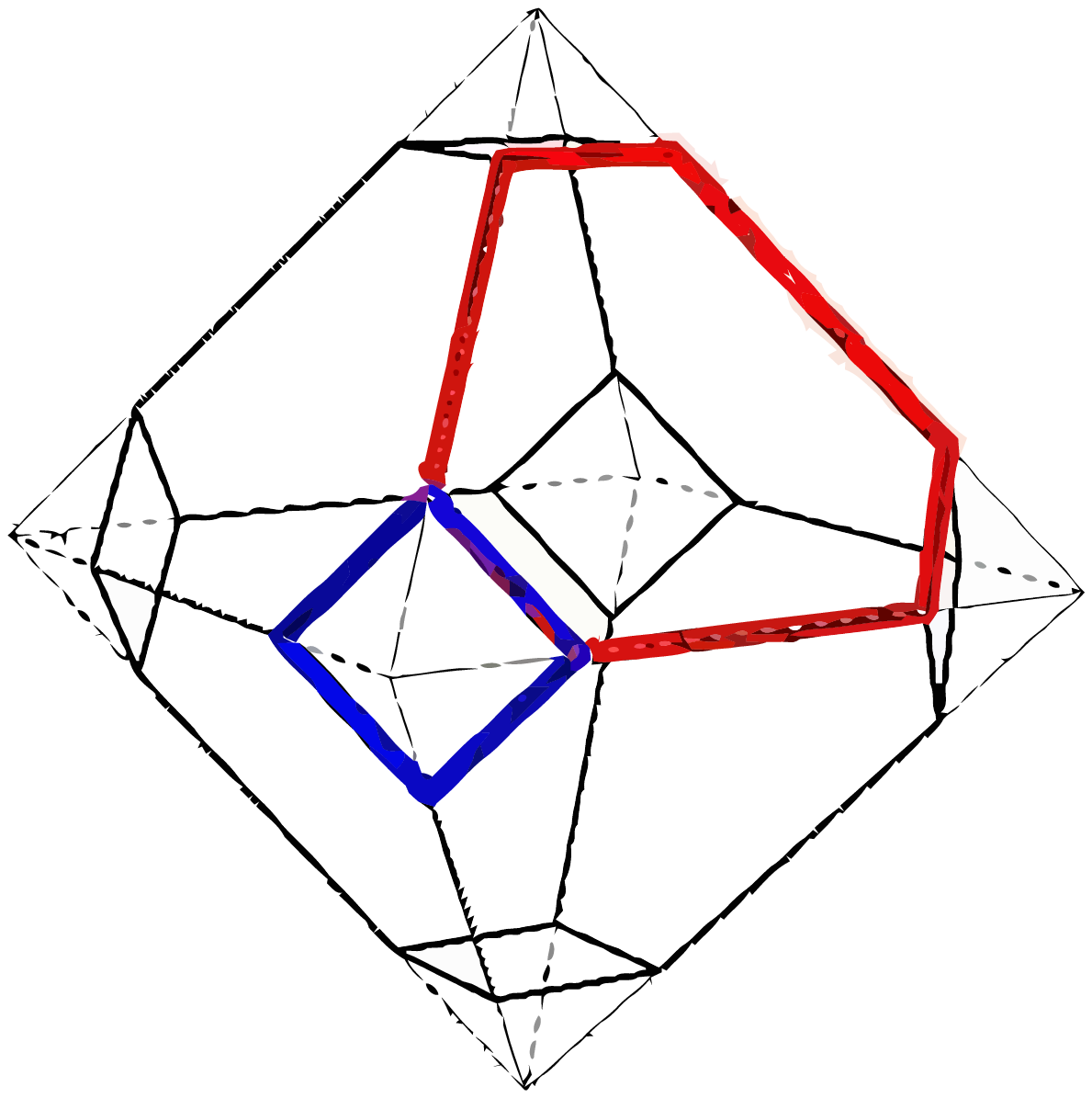}} \\
$P^{0}$ & $P^{1}$ & $P^{2}$&$P^{01}$\\
&&&\\
\raisebox{-\totalheight}{ \includegraphics*[clip, scale = .2]{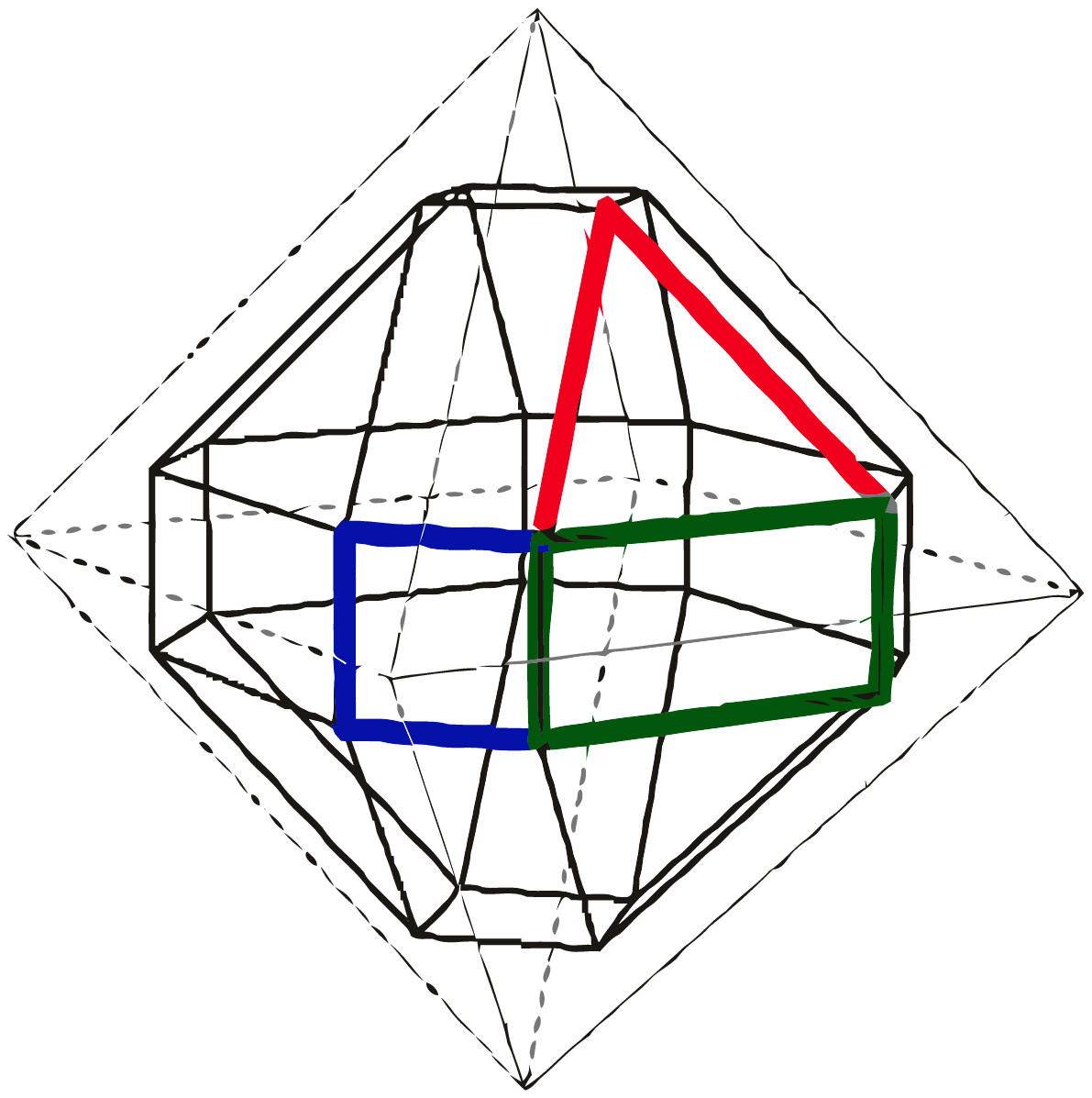}}& \raisebox{-\totalheight}{ \includegraphics*[clip, scale = .2]{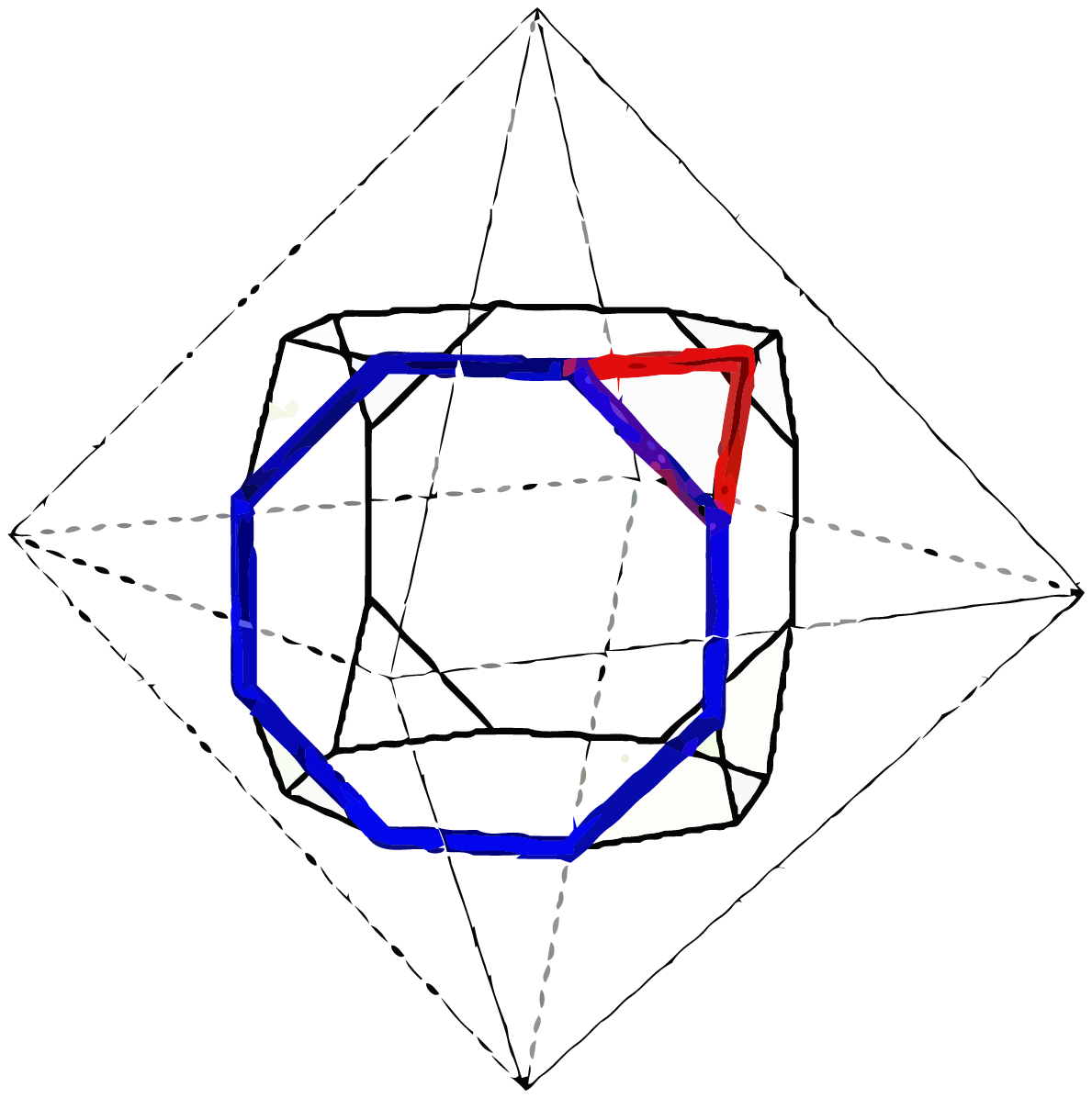}}& \raisebox{-\totalheight}{ \includegraphics*[clip, scale = .2]{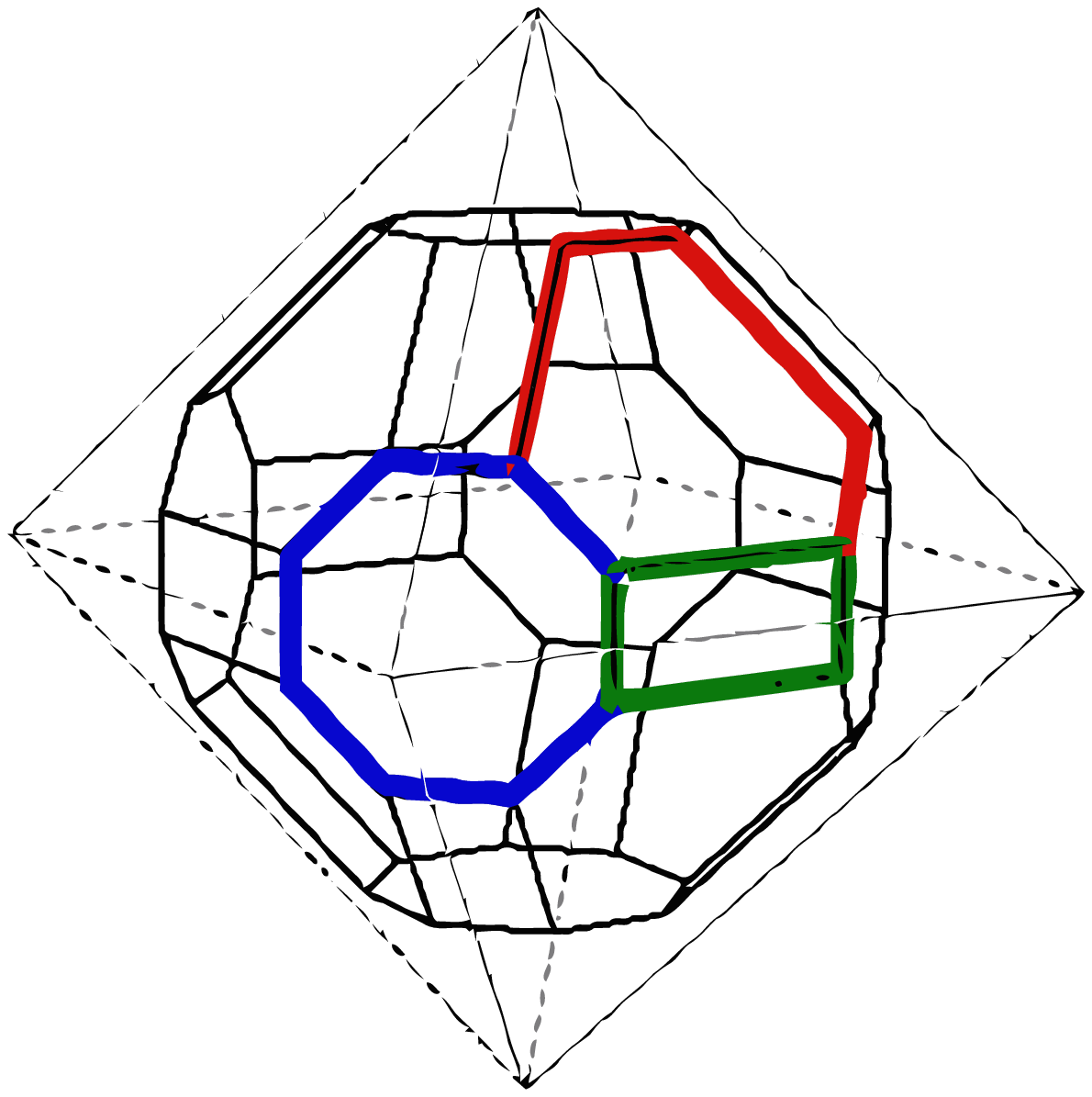}}&\\
$P^{02}$& $P^{12}$ & $P^{012}$&
\end{tabular}\caption{The Wythoffians derived from $\{3,4\}$.}\label{octahedron}\end{center}\end{figure}

For the Wythoffians of the cube $\{4,3\}$ we can exploit the duality between the cube and the octahedron (using the generators $s_2,s_1,s_0$ for $\{4,3\}$). In fact, interchanging 0 and~2 in the superscripts from the Wythoffians of the octahedron (of Figure~\ref{octahedron}) results in the Wythoffians of the cube, and vice versa. We will not reproduce the results for the cube in detail. 

The Petrie dual $\{6,4\}_3$ of $\{3,4\}$ has a group of the form $G(\{6,4\}_3)=\langle r_0,r_1,r_2\rangle$, where $r_{0}=s_{0}s_{2}$, $r_{1}:=s_{1}$, $r_{2}:=s_{2}$ and $s_0,s_1,s_2$ are as above. Given these generators, we are limited in our choice of initial vertex. As the rotation axis of the halfturn $r_0$ lies in the reflection plane of $r_2$, any point invariant under $r_0$ is also invariant under $r_2$. Thus there is no point that is invariant under only $r_0$ or under both $r_0$ and $r_1$ and not $r_2$. As such there is no polyhedron $P^2$ nor a polyhedron $P^{12}$. For pictures of the Wythoffians, see Figure \ref{octa-petrie}.

The first Wythoffian, $P^0$, is the regular polyhedron $\{6,4\}_3$ itself which has four regular, skew hexagonal faces which all meet at each vertex; the vertex symbol is $(6_s^4)$. The vertex figure is then a convex square,  as for the octahedron with which $P^0$ shares an edge graph. 

The Wythoffian $P^{1}$ shares its edge graph with the cuboctahedron. The faces are four convex, regular hexagons of type $F_2^{\{0,1\}}$ (the equatorial hexagons of the cuboctahedron) and six convex squares of type $F_2^{\{1,2\}}$. The vertex symbol is $(4_c.6_c.4_c.6_c)$. The hexagons all intersect leading to a vertex figure which is a crossed quadrilateral (like a bowtie). This is a uniform polyhedron with planar faces, in the notation of ~\cite{Cox3} it is $\frac{4}{3}\ 4\ |\ 3$. 

The polyhedron $P^{01}$ shares an edge graph with a polyhedron which is isomorphic to a truncated octahedron. It has four skew dodecagons of type $F_2^{\{0,1\}}$ (truncations of skew hexagonal faces of $\{6,4\}_3$) and six convex squares of type $F_2^{\{1,2\}}$. The vertex symbol is $(4_c.12_s^2)$ with an isosceles triangle as the vertex figure. 

The Wythoffian $P^{02}$ shares a vertex set with a polyhedron which is isomorphic to a rhombicuboctahedron. There are four skew hexagons of type $F_2^{\{0,1\}}$, six convex squares of type $F_2^{\{1,2\}}$, and twelve crossed quadrilaterals of type $F_2^{\{0,2\}}$. At each vertex a crossed quadrilateral, a square, a crossed quadrilateral, and a skew hexagon occur in cyclic order yielding a convex quadrilateral vertex figure with vertex symbol $(4_{\, \bowtie}.4_c.4.6_s)$, where $4_{\,\bowtie}$ indicates a crossed quadrilateral. 

For $P^{012}$ the resulting polyhedron shares a vertex set with a polyhedron which is isomorphic to the truncated cuboctahedron. The figure has four skew dodecagonal faces of type $F_2^{\{0,1\}}$ (truncated skew hexagons), six convex octagons of type $F_2^{\{1,2\}}$ (truncated squares), and twelve crossed quadrilaterals of type $F_2^{\{0,2\}}$. The vertex symbol is $(4_{\,\bowtie}.8_c.12_s)$ with a triangular vertex figure. 

\begin{figure}[h]
\begin{center}
\begin{tabular}{ccccc}
\raisebox{-\totalheight}{ \includegraphics*[clip, scale = .20]{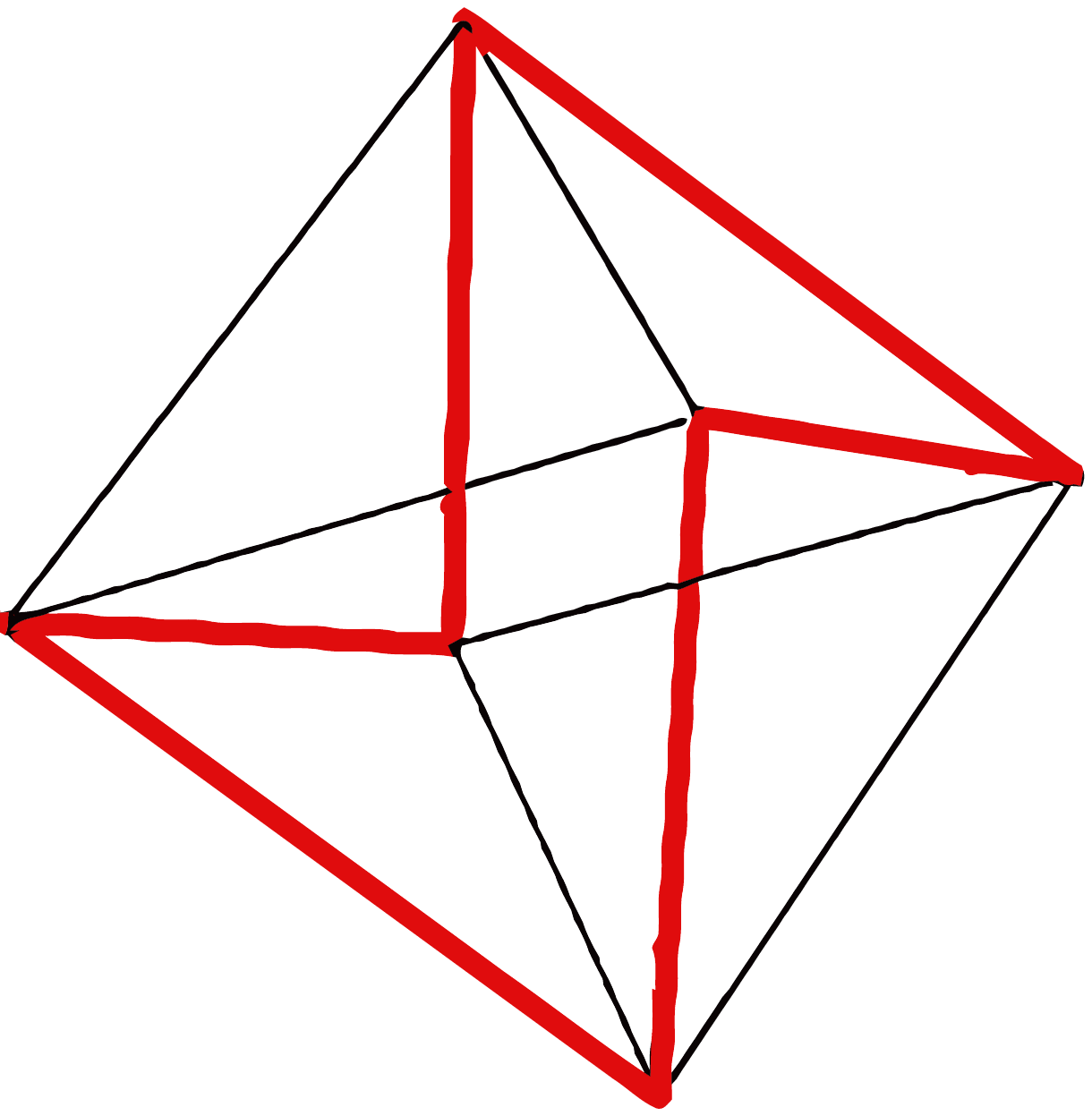}}& \raisebox{-\totalheight}{ \includegraphics*[clip, scale = .20]{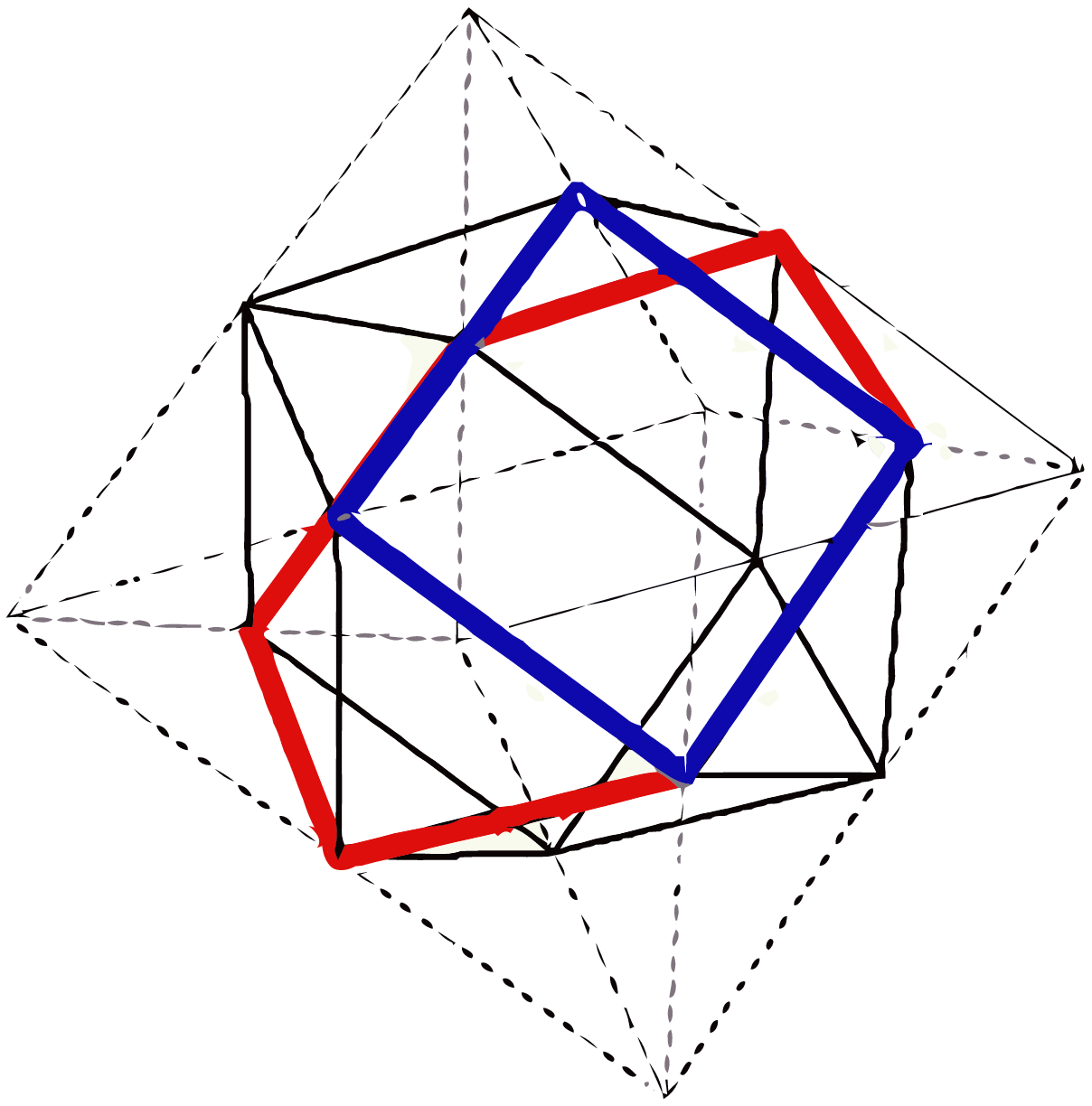}}&\raisebox{-\totalheight}{ \includegraphics*[clip, scale = .2]{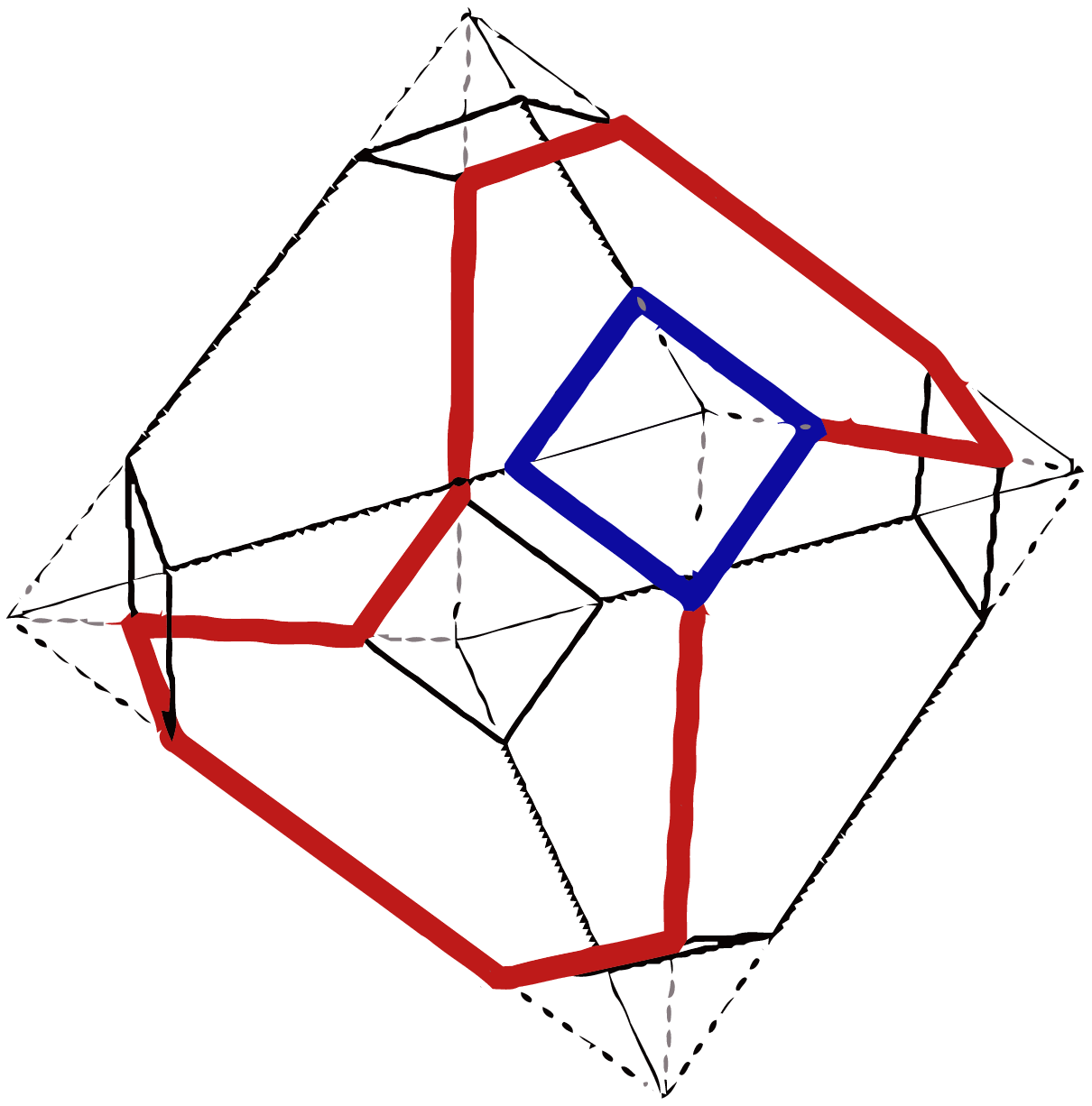}}& \raisebox{-\totalheight}{ \includegraphics*[clip, scale = .2]{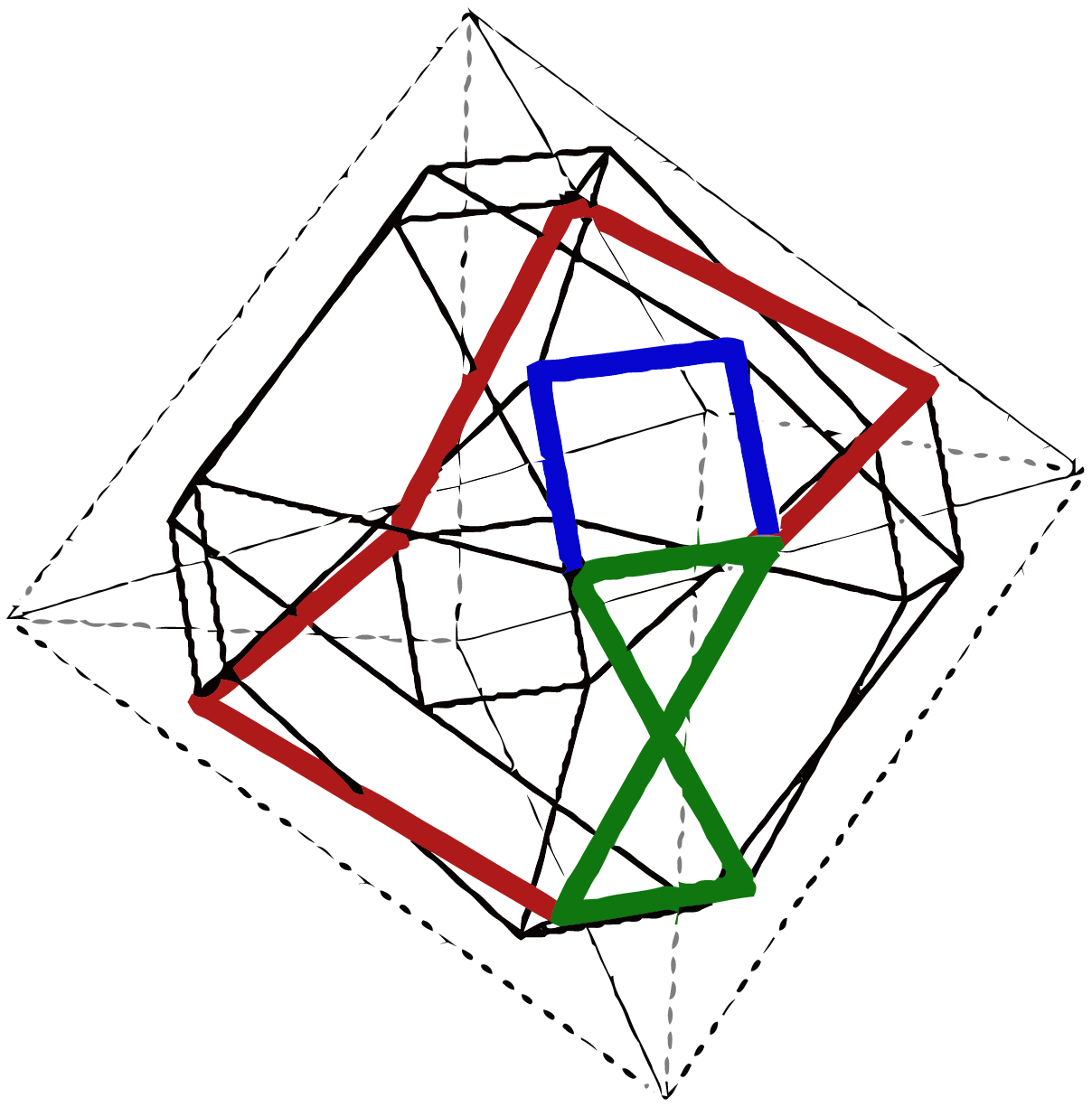}}&\raisebox{-\totalheight}{ \includegraphics*[clip, scale = .2]{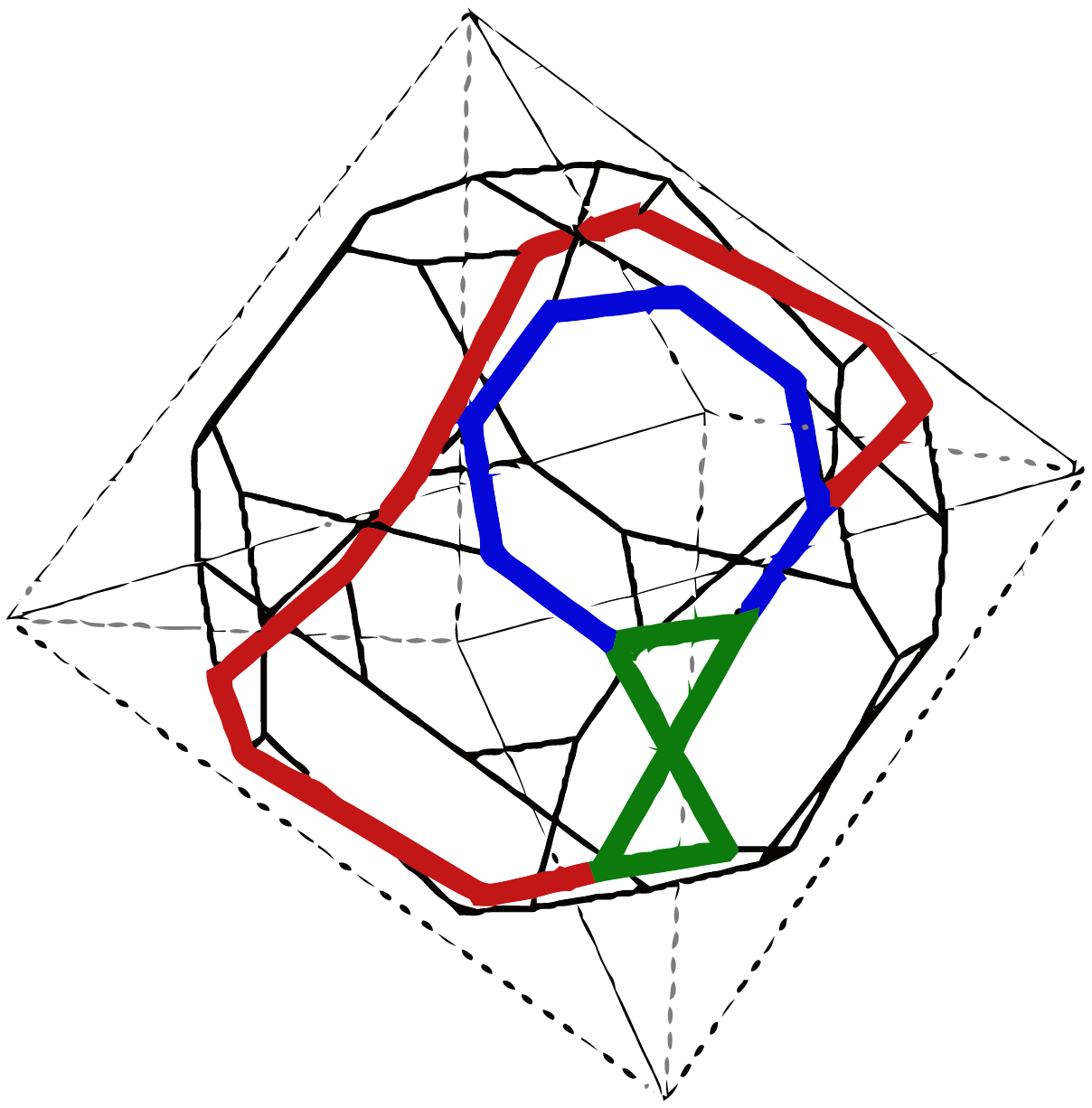}} \\
&&&&\\
$P^0$&$P^1$&$P^{01}$&$P^{02}$&$P^{012}$ 
\end{tabular}\caption{The Wythoffians derived from $\{6,4\}_3$.}\label{octa-petrie}\end{center}\end{figure}

The final geometrically regular polyhedron with octahedral symmetry is the Petrie-dual of the cube, $\{6,3\}_4$. Its symmetry group is $G(\{6,3\}_4)=\langle r_0,r_1,r_2\rangle$, where $r_{0}=s_{2}s_{0}$, $r_{1}:=s_{1}$, $r_{2}:=s_{0}$ and $s_0,s_1,s_2$ are as above. The duality between the octahedron and the cube can again be seen here. The generators $s_2s_0,\ s_1,$ and $s_0$ of $G(\{6,3\}_4)$ are obtained from the generators of $G(\{6,4\}_3)$ by interchanging $s_0$ and $s_2$. The Wythoffians of $\{6,3\}_4$ also share many similarities with the Wythoffians of $\{6,4\}_3$. As with $\{6,4\}_3$, every vertex which is stabilized by $r_0$ is also stabilized by $r_2$. Thus there is no point which is stabilized by $r_0$ alone, nor is there one which is stabilized by both $r_0$ and $r_1$. Consequently, there is no polyhedron $P^2$ and no polyhedron $P^{12}$. For pictures of the Wythoffians, see Figure \ref{cube-petrie}.

The first Wythoffian, $P^0$, is the regular polyhedron $\{6,3\}_4$ itself. It shares its edge graph with the cube and thus has eight vertices and twelve edges. The four faces are the Petrie polygons of the cube which are regular, skew hexagons. Three faces meet at each vertex, with a vertex symbol $(6_s^3)$ and a regular triangle as the vertex figure. 

The Wythoffian $P^1$ has the same edge graph as the cuboctahedron. There are four intersecting, regular, convex hexagons of type $F_2^{\{0,1\}}$ (the equatorial hexagons of the cuboctahedron) and eight regular triangles of type $F_2^{\{1,2\}}$. The vertex symbol is $(3.6_c.3.6_c)$ with a vertex figure of a crossed quadrilateral. This is a uniform polyhedron with planar faces, in the notation of ~\cite{Cox3} it is $\frac{3}{2}\ 3\ |\ 3$. 

When the initial vertex is stabilized by $r_2$ alone then the resulting polyhedron, $P^{01}$, shares its edge graph with a polyhedron which is isomorphic to the truncated cube. Then there are four skew dodecagons of type $F_2^{\{0,1\}}$ (truncations of the skew hexagonal faces of $\{6,3\}_4$) and eight regular triangles of type $F_2^{\{1,2\}}$. The vertex symbol is $(3.12_s^2)$ and the polyhedron has an isosceles triangle as a vertex figure. 

The Wythoffian $P^{02}$ shares its vertex set with a polyhedron which is isomorphic to a rhombicuboctahedron. The faces are four regular hexagons (convex or skew depending on the exact choice of initial vertex) of type $F_2^{\{0,1\}}$, eight regular triangles of type $F_2^{\{1,2\}}$, and twelve crossed quadrilaterals of type $F_2^{\{0,2\}}$. The vertex symbol is $(3.4_{\,\bowtie}.6.4_{\,\bowtie})$ and the vertex figure is a convex trapezoid. 

For $P^{012}$ the resulting polyhedron shares its vertex set with a polyhedron which is isomorphic to the truncated cuboctahedron. It has four dodecagons (which may be skew or convex depending on the choice of initial vertex) of type $F_2^{\{0,1\}}$ (truncated hexagons), eight convex hexagons of type $F_2^{\{1,2\}}$ (truncated triangles), and twelve crossed quadrilaterals of type $F_2^{\{0,2\}}$.  The vertex figure is a triangle and the
vertex symbol is $(4_{\,\bowtie}.6_c.12)$.

\begin{figure}[h]
\begin{center}
\begin{tabular}{ccccc}
\raisebox{-\totalheight}{ \includegraphics*[clip, scale = .2]{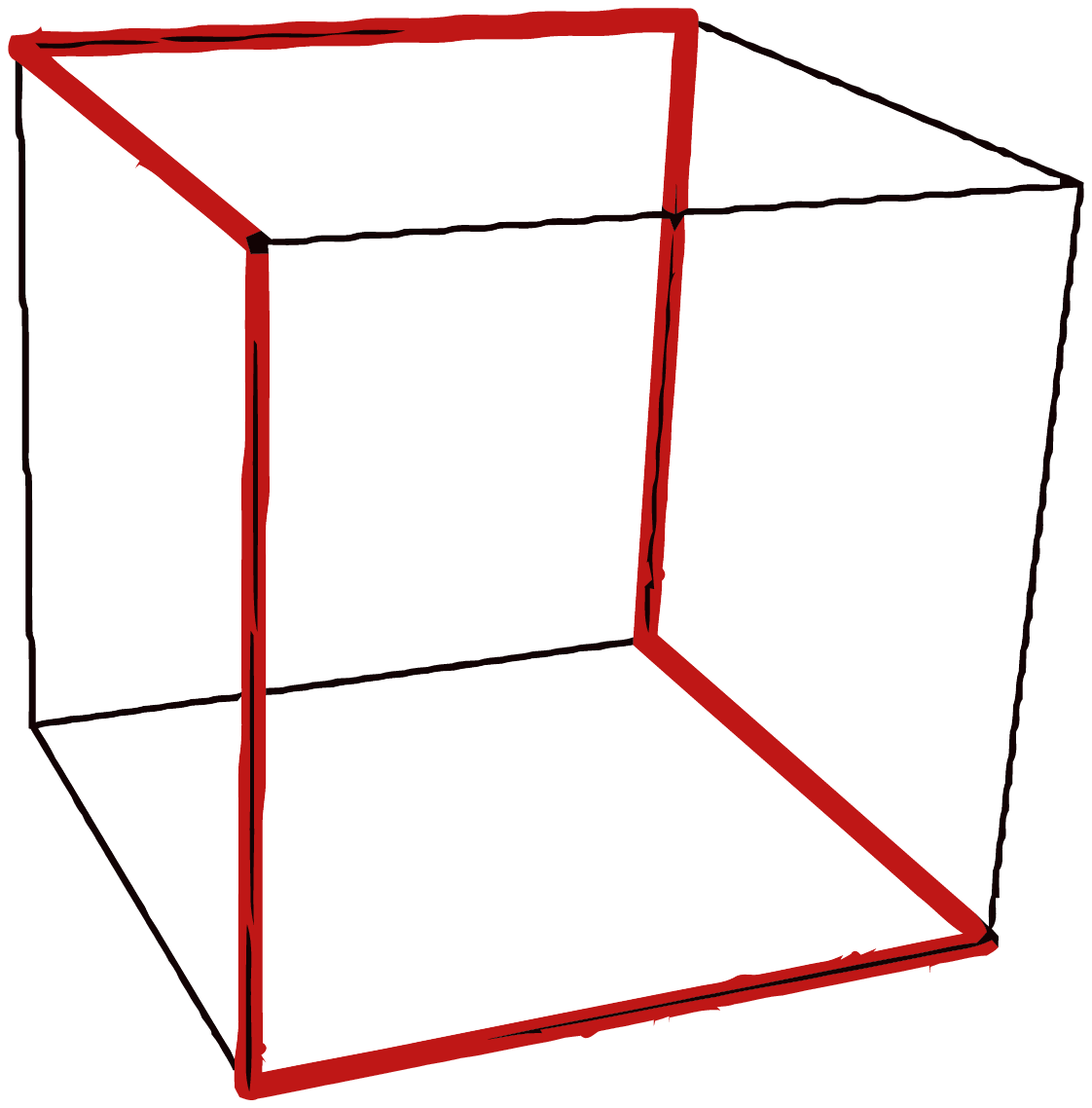}}& \raisebox{-\totalheight}{ \includegraphics*[clip, scale = .2]{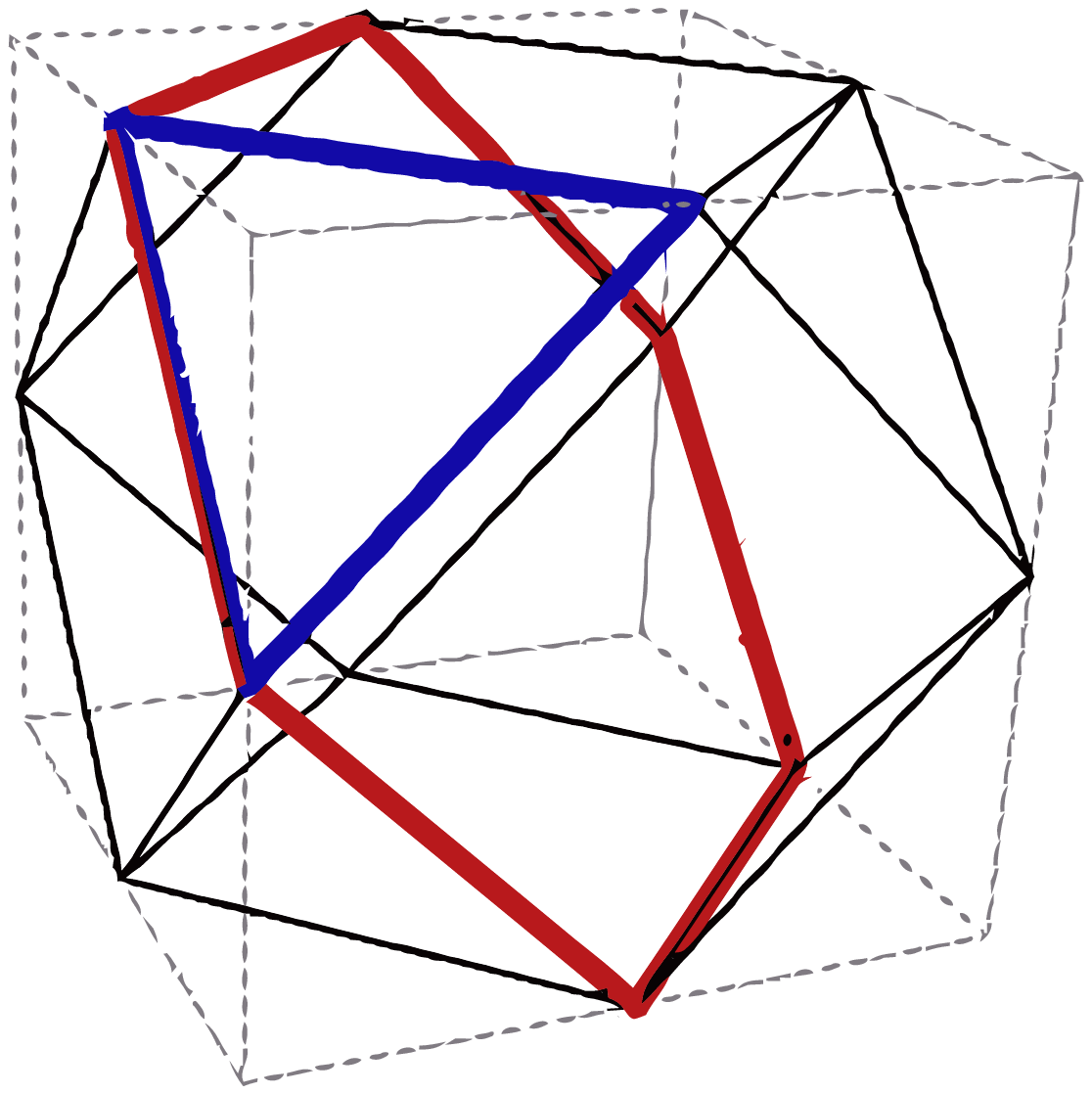}}&\raisebox{-\totalheight}{ \includegraphics*[clip, scale = .2]{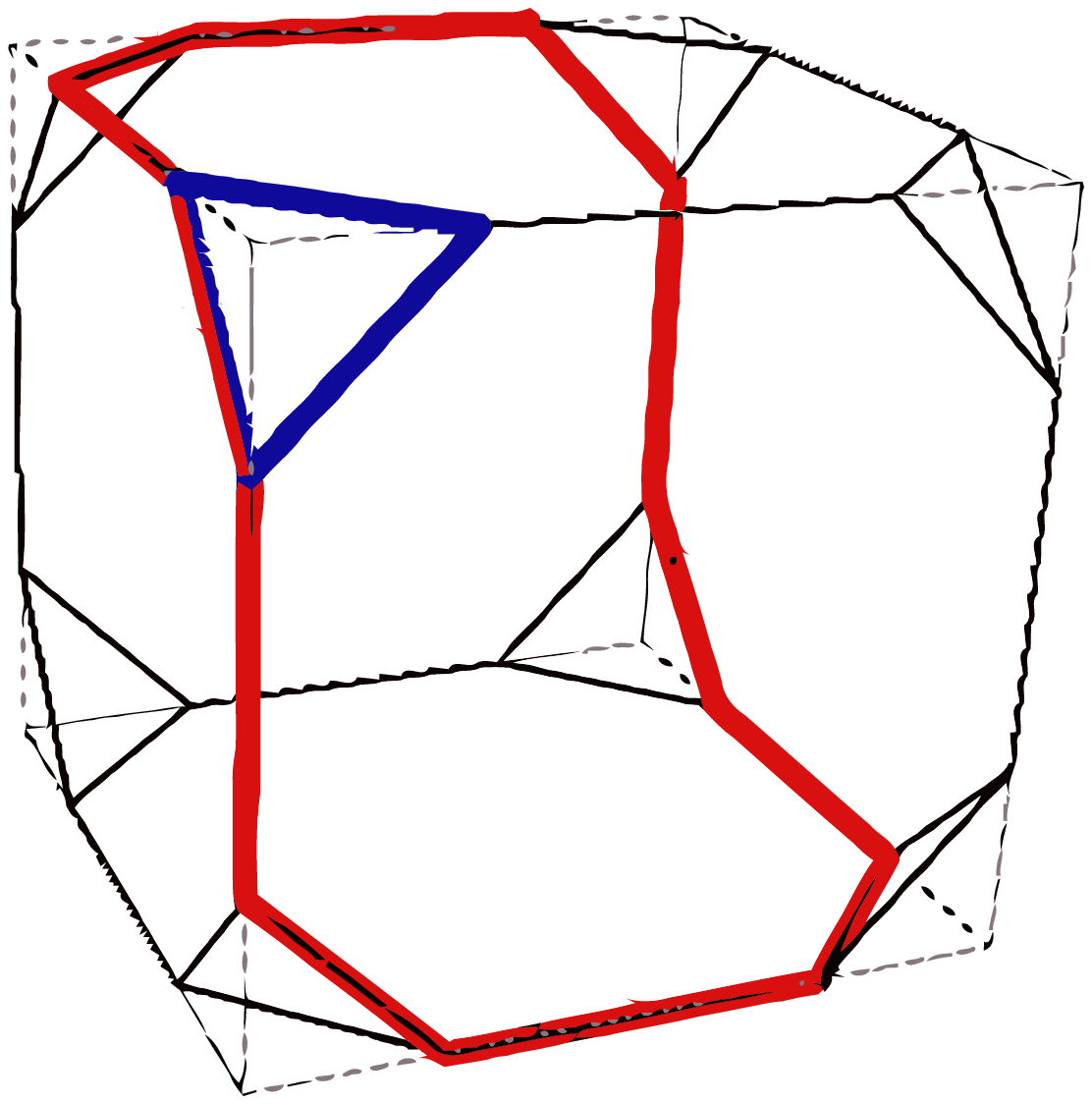}}& \raisebox{-\totalheight}{ \includegraphics*[clip, scale = .2]{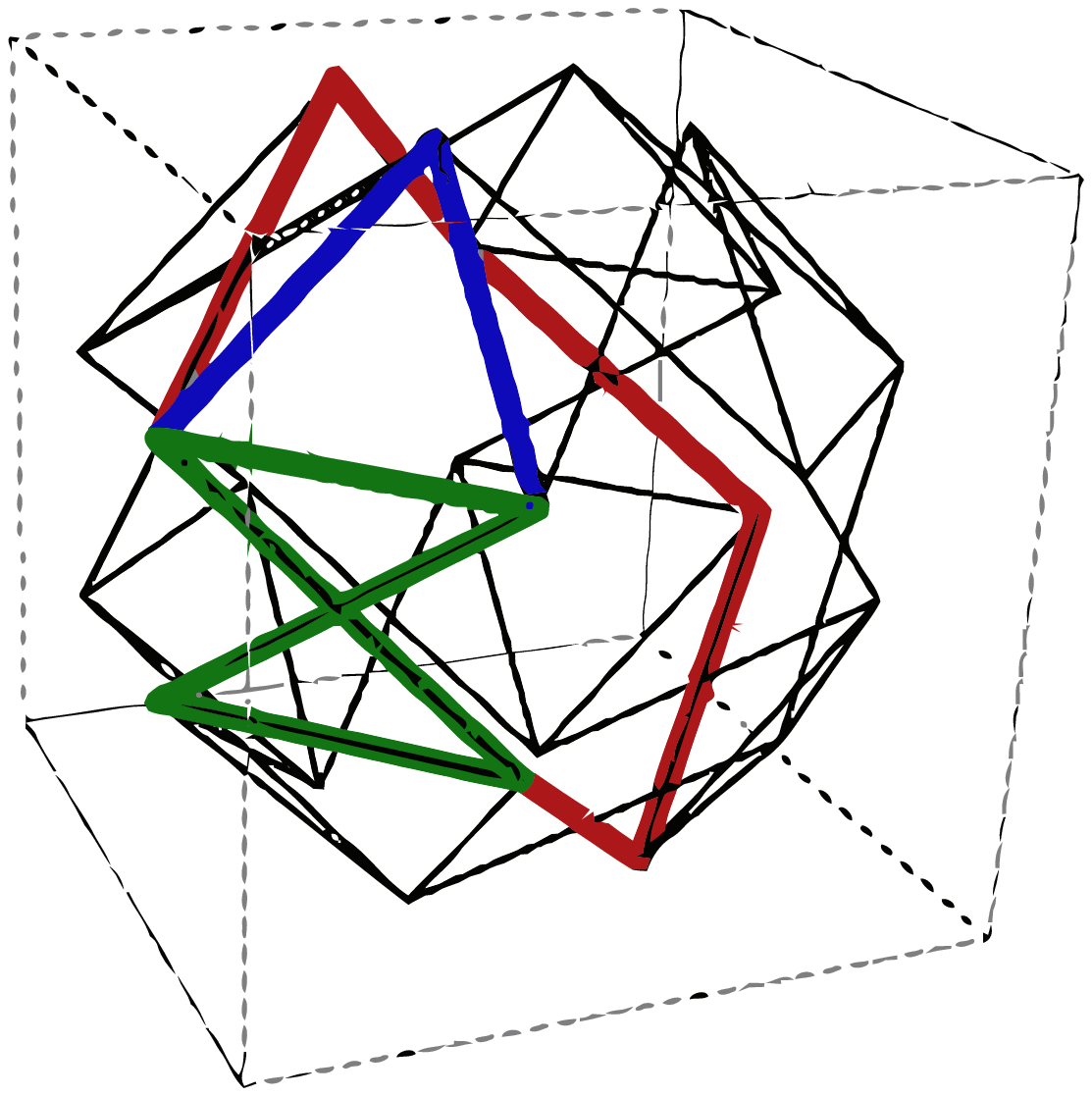}}&\raisebox{-\totalheight}{ \includegraphics*[clip, scale = .2]{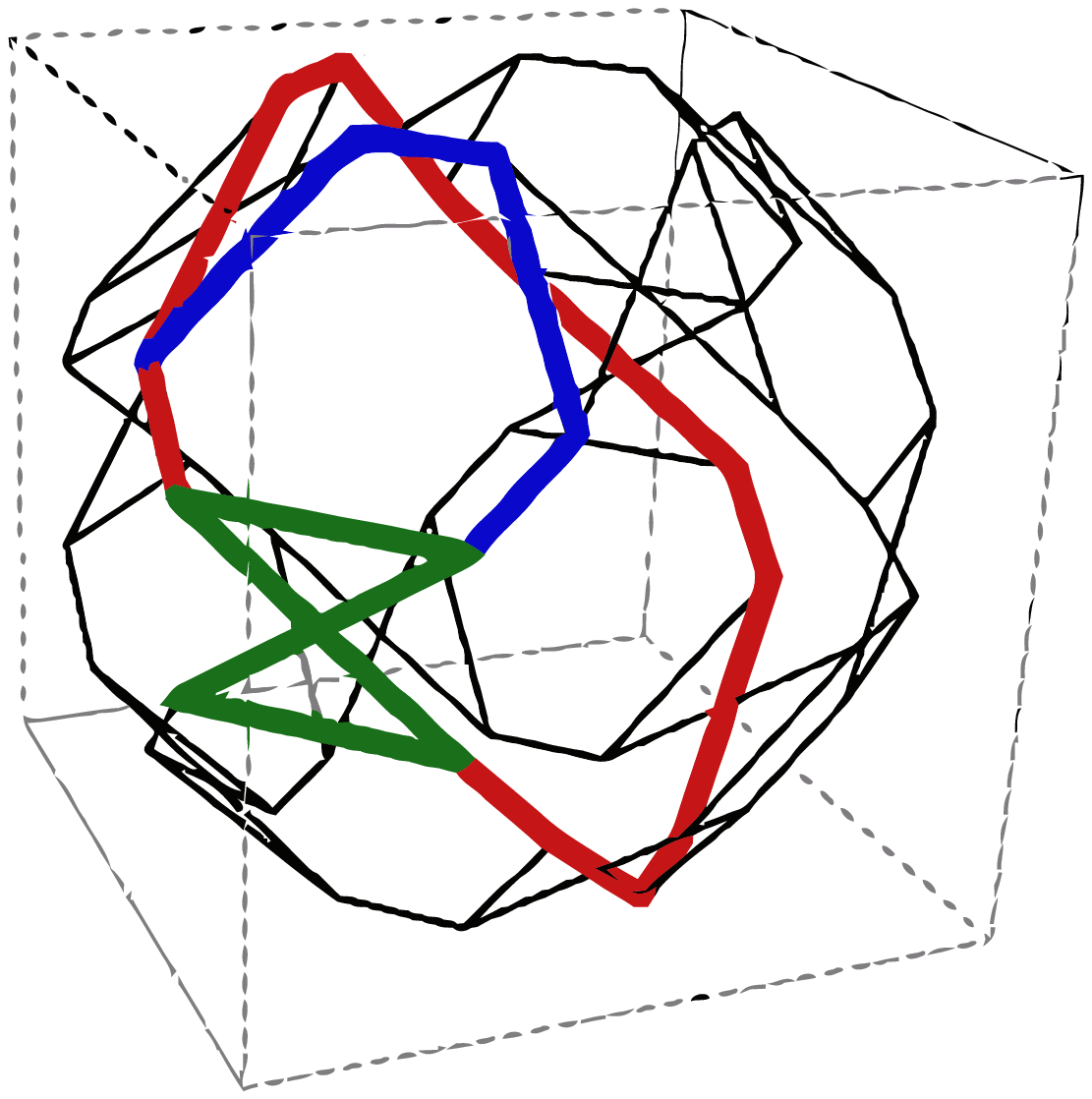}}\\
&&&&\\
$P^0$&$P^1$&$P^{01}$&$P^{02}$&$P^{012}$ 
\end{tabular}\caption{The Wythoffians derived from $\{6,3\}_4$.}\label{cube-petrie}\end{center}\end{figure}

\subsection{Planar polyhedra derived from the square tiling}

The square tiling of the plane is the (self-dual) regular geometric apeirohedron $\{4,4\}$, with symmetry group $G(\{4,4\})=\langle s_0,s_1,s_2\rangle$. The second regular apeirohedron we investigate is its Petrie dual, $\{\infty,4\}_4$. All initial vertices for Wythoffians of these two polyhedra are chosen from the fundamental triangle of $\{4,4\}$. Pictures of the Wythoffians are in Figures~\ref{4,4} and \ref{infty,4_4}, with base faces indicated in color. The Wythoffians for $\{4,4\}$ are well-known but those for $\{\infty,4\}_4$ certainly have not received much attention (however, see \cite[Sect. 12.3]{GrunShep}).

Beginning with the Wythoffians of $P=\{4,4\}$ we first note that $P^0$ is the regular apeirohedron $\{4,4\}$ itself. All faces are convex squares of type $F_2^{\{0,1\}}$. Four squares meet at each vertex, giving a vertex symbol $(4_c^4)$ and a convex square vertex figure. By the self-duality of $\{4,4\}$ this is also the Wythoffian $P^2$ (which is $P^0$ for the dual of $\{4,4\}$).

In $P^1$ the apeirohedron has two types of face: convex squares of type $F_2^{\{0,1\}}$ and congruent convex squares of type $F_2^{\{1,2\}}$. The vertex figures are convex squares since the vertex symbol is $(4_c^4)$. This is again a regular apeirohedron, a similar copy of the original square tessellation. 

The apeirohedron $P^{01}$ has two distinct types of 2-faces. The first type of base face is a convex octagon of type $F_2^{\{0,1\}}$ (truncated square), and the second type is a convex square of type $F_2^{\{1,2\}}$. Two octagons and one square meet at each vertex yielding an isosceles triangle for a vertex figure with vertex symbol $(4_c.8_c^2)$. The initial vertex can be chosen so that the octagons are regular in which case the Wythoffian is a uniform apeirohedron, the Archimedean tessellation $(4.8.8)$. Again by the self-duality of $\{4,4\}$ this is also the Wythoffian $P^{1,2}$ (which is $P^{0,1}$ for the dual of $\{4,4\}$). 

The apeirohedron $P^{02}$ has three different types of 2-faces. The first is a convex square face of type $F_2^{\{0,1\}}$, the second is a convex square of type $F_2^{\{1,2\}}$, and the final type of face is a convex rectangle of type $F_2^{\{0,2\}}$. At each vertex there is a square of the first kind, a rectangle, a square of the second kind, and a rectangle, giving a vertex symbol $(4_{c}.4_{c}.4_{c}.4_{c})$. The resulting vertex figure is convex quadrilateral. When the initial vertex is chosen so that the base edges have the same length, the rectangles are squares and the Wythoffian is a congruent copy of the original tessellation. 

For $P^{012}$, the apeirohedron has two different octagonal faces and a rectangular face. The first type of convex octagons are of type $F_2^{\{0,1\}}$ (truncated squares), the second type of convex octagons are of type $F_2^{\{1,2\}}$ (truncated squares), and the convex rectangles are of type $F_2^{\{0,2\}}$. One octagon of each type and a rectangle come together at each vertex to make a triangular vertex figure with vertex symbol $(4_c.8_c^2)$. When the initial vertex is chosen to make the base edges have equal length then the faces are regular polygons and the Wythoffian is again the Archimedean tessellation $(4.8.8)$ with an isosceles triangle as the vertex figure.

\begin{figure}[h]
\begin{center}
\begin{tabular}{cccc}
\raisebox{-\totalheight}{ \includegraphics*[clip, scale = .285]{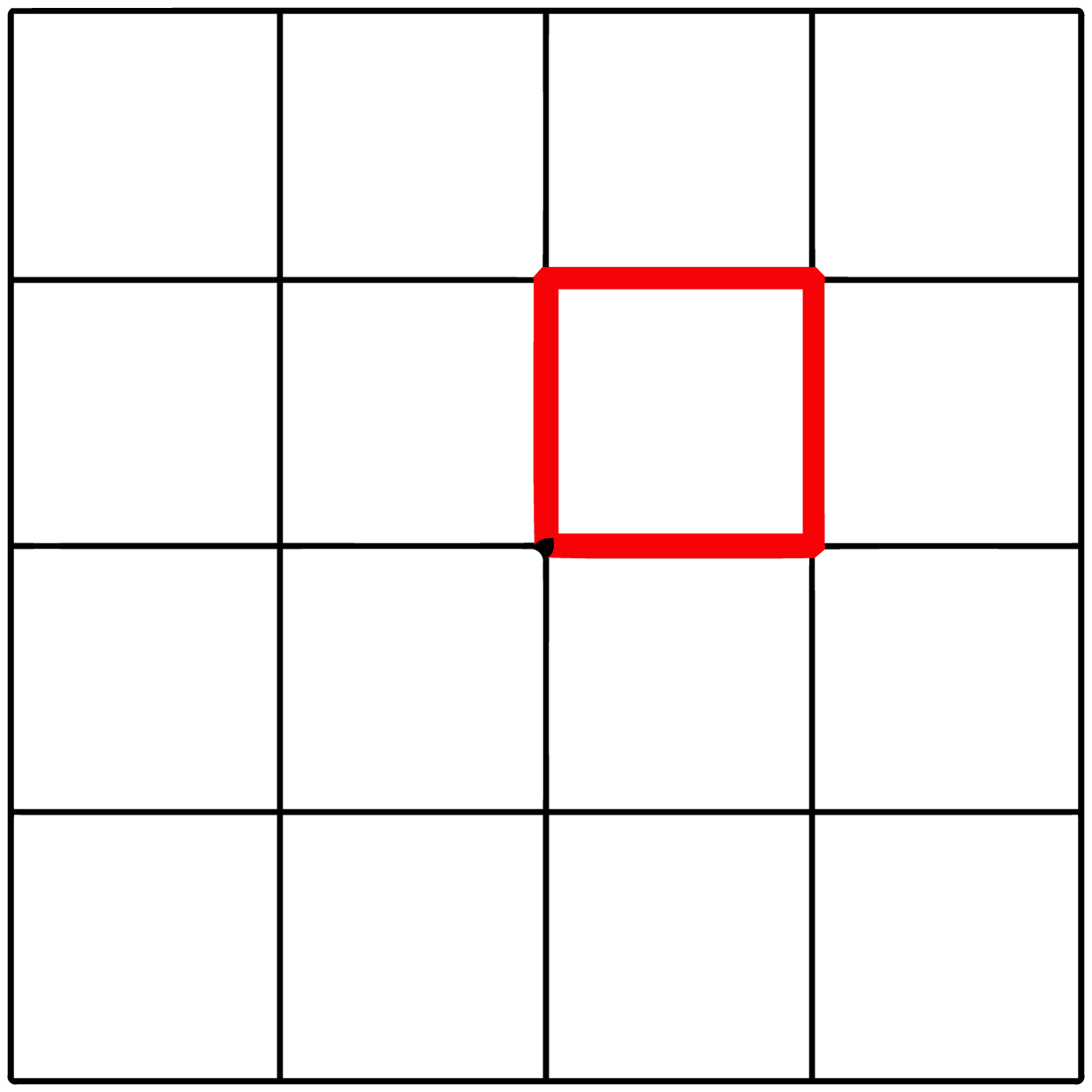}}& \raisebox{-\totalheight}{ \includegraphics*[clip, scale = .285]{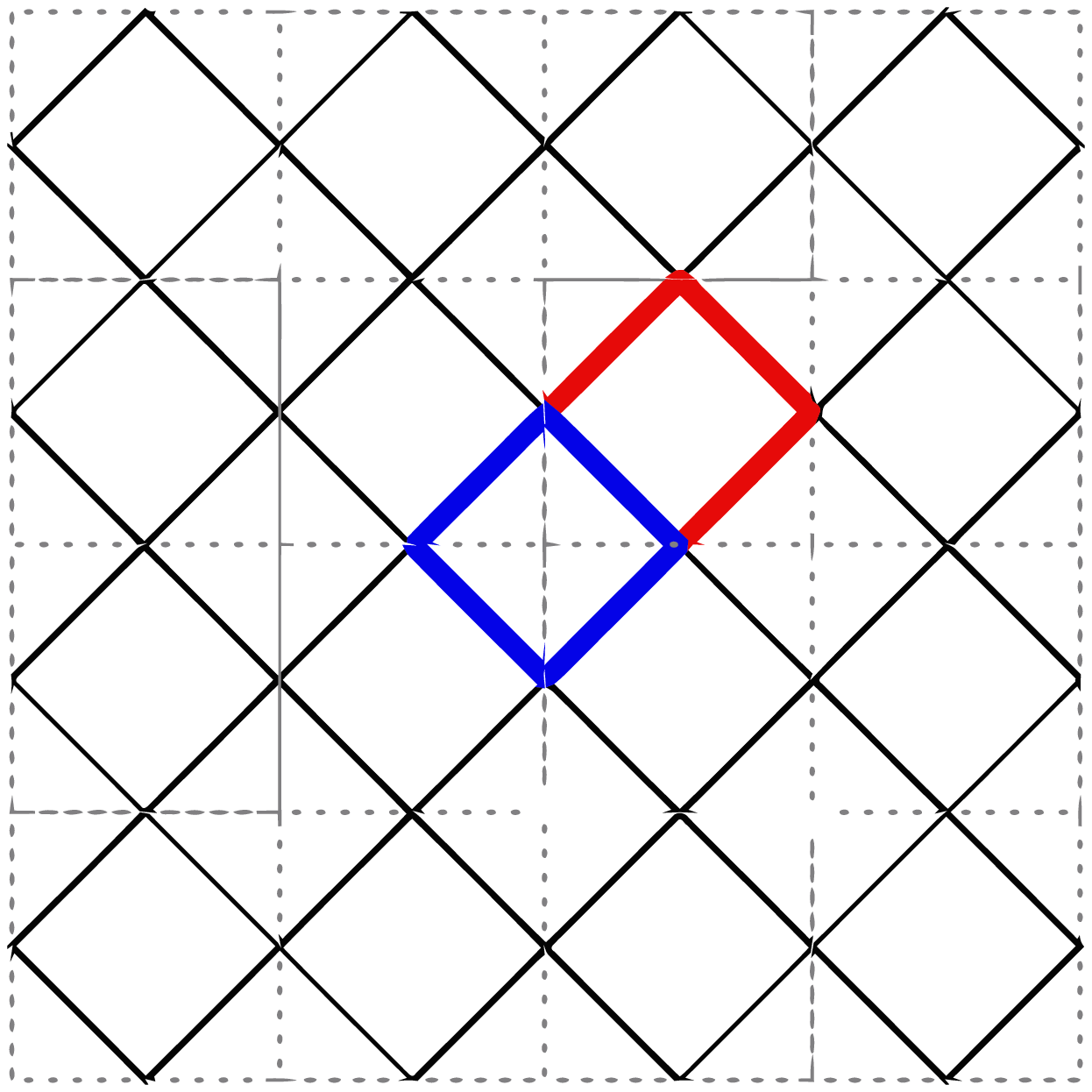}}& \raisebox{-\totalheight}{ \includegraphics*[clip, scale = .285]{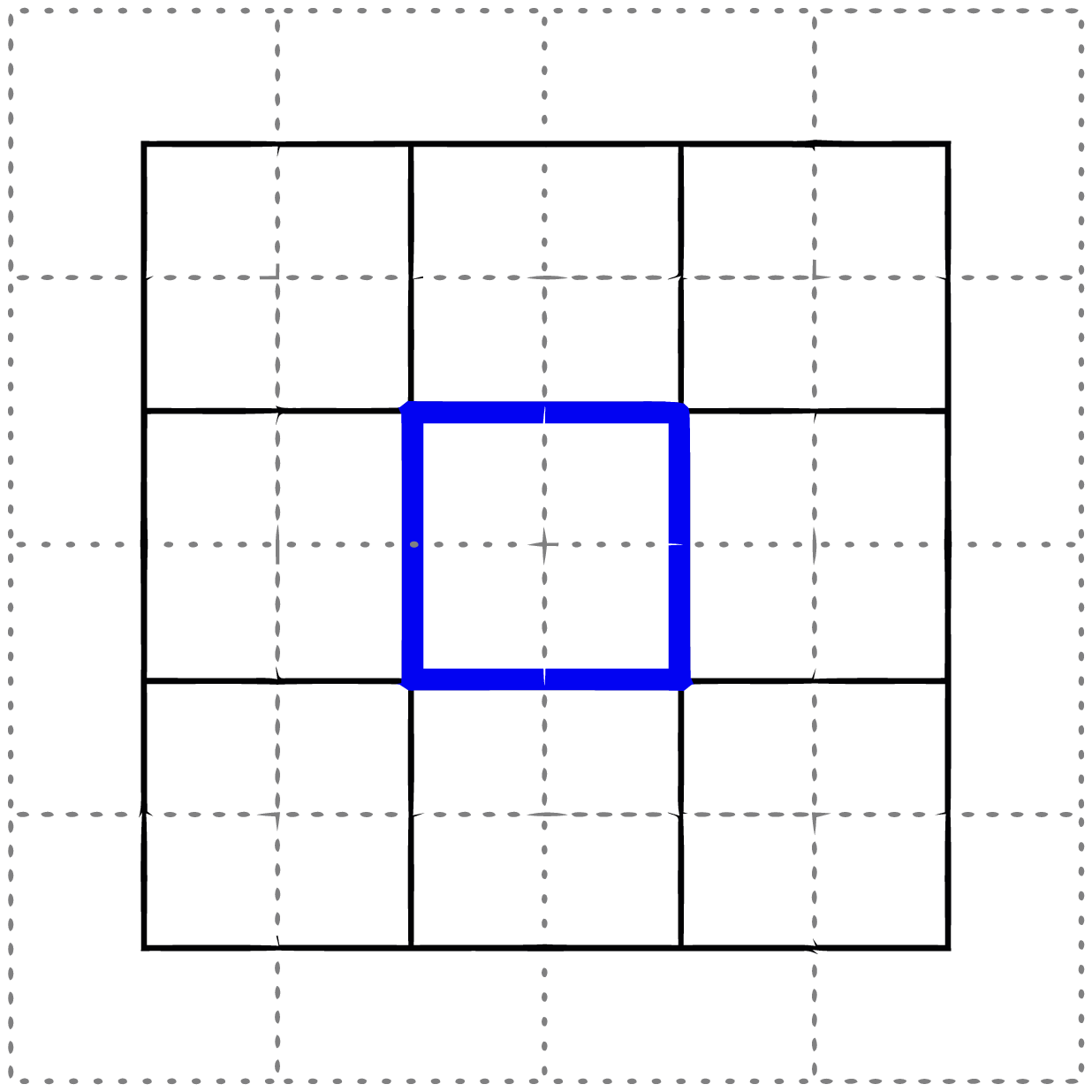}}&\raisebox{-\totalheight}{ \includegraphics*[clip, scale = .285]{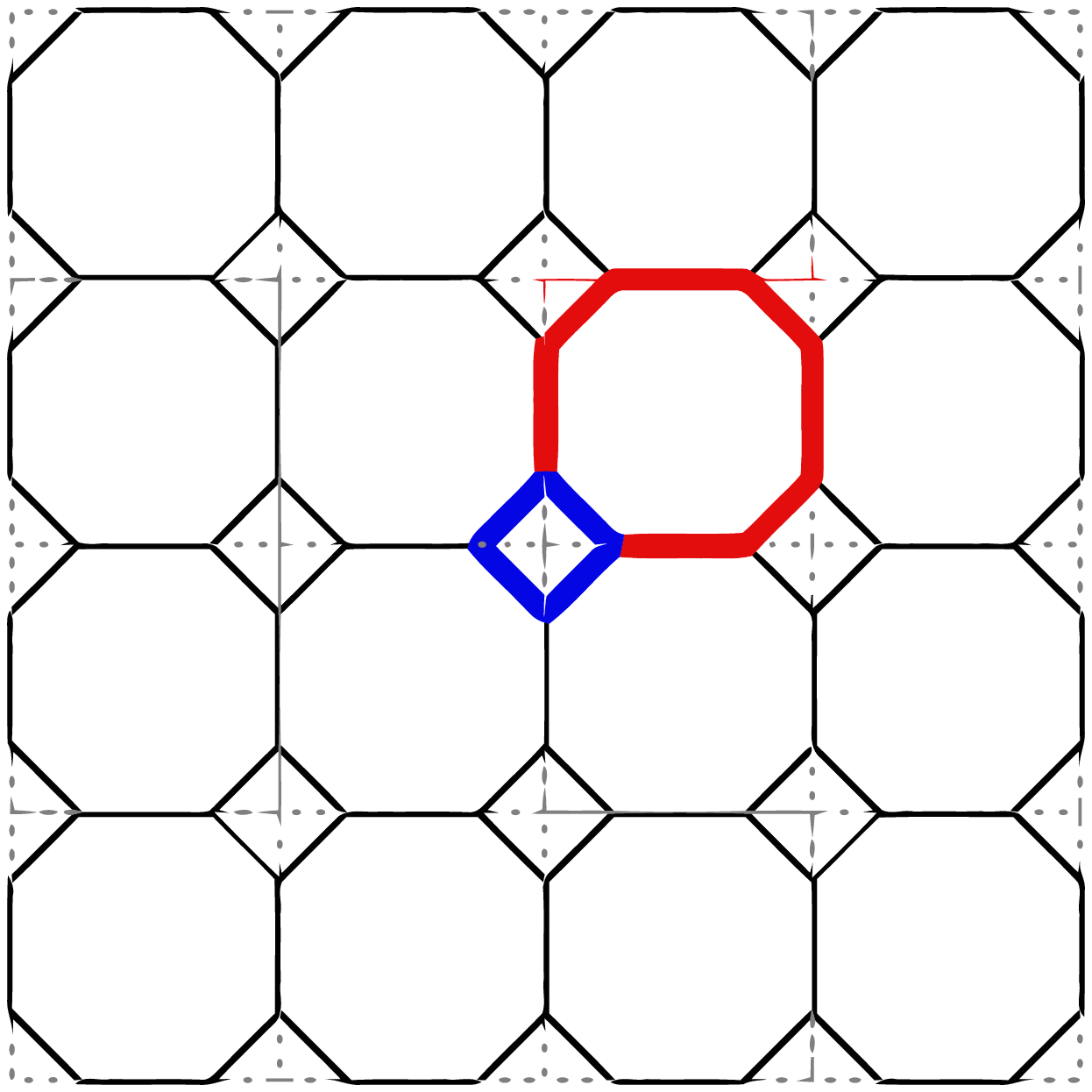}} \\
&&&\\
$P^{0}$ & $P^{1}$ & $P^{2}$&$P^{01}$\\
&&&\\
\raisebox{-\totalheight}{ \includegraphics*[clip, scale = .285]{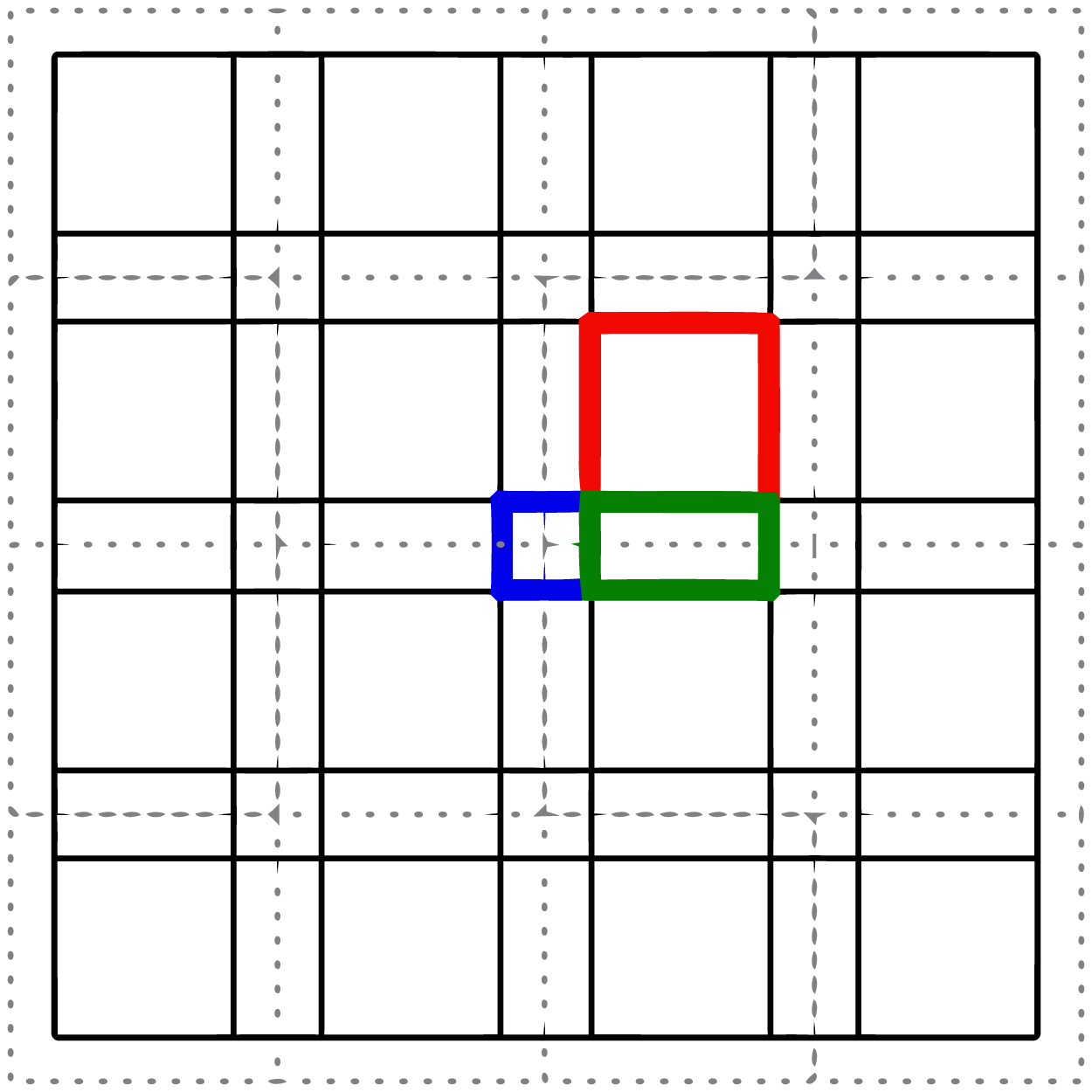}}& \raisebox{-\totalheight}{ \includegraphics*[clip, scale = .285]{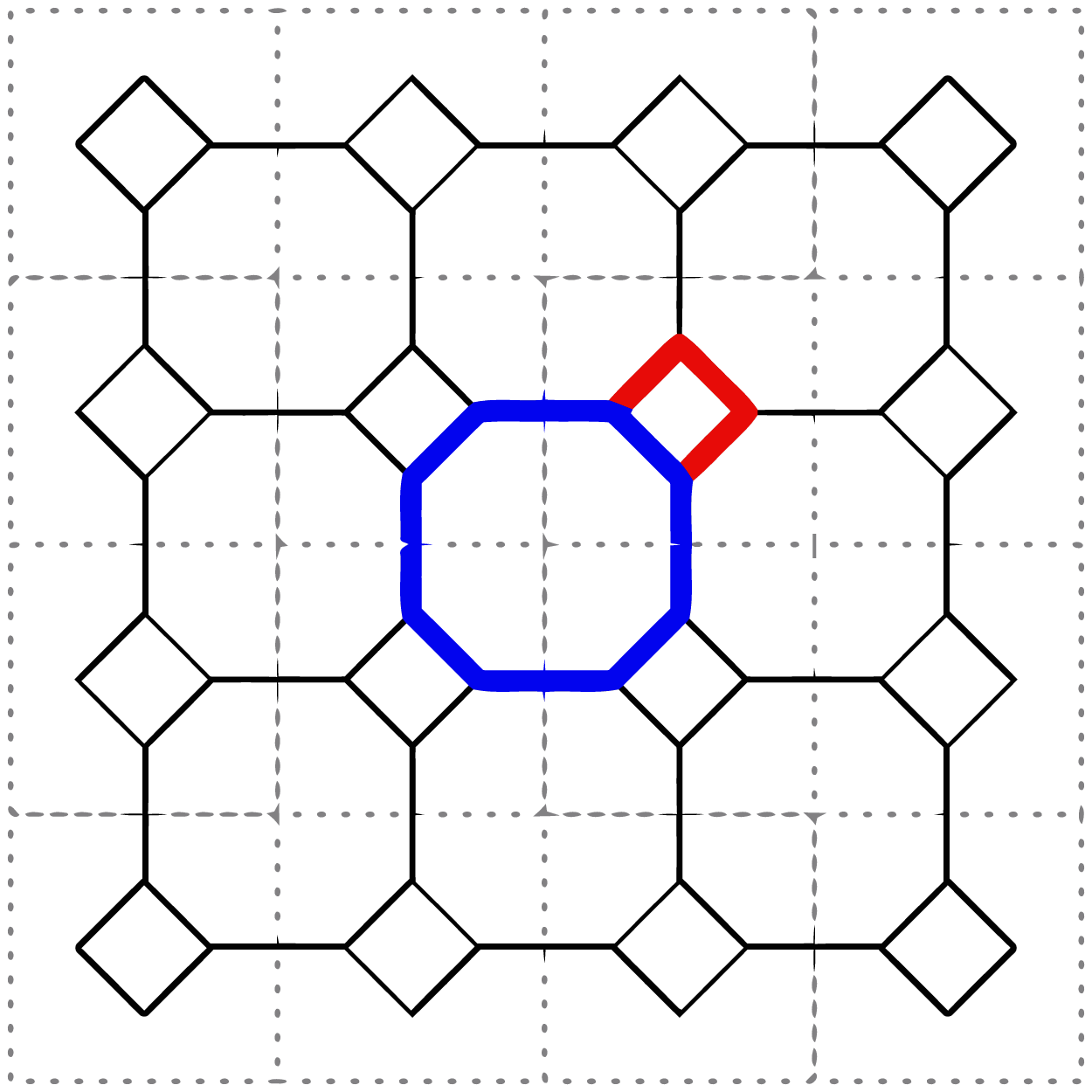}}& \raisebox{-\totalheight}{ \includegraphics*[clip, scale = .285]{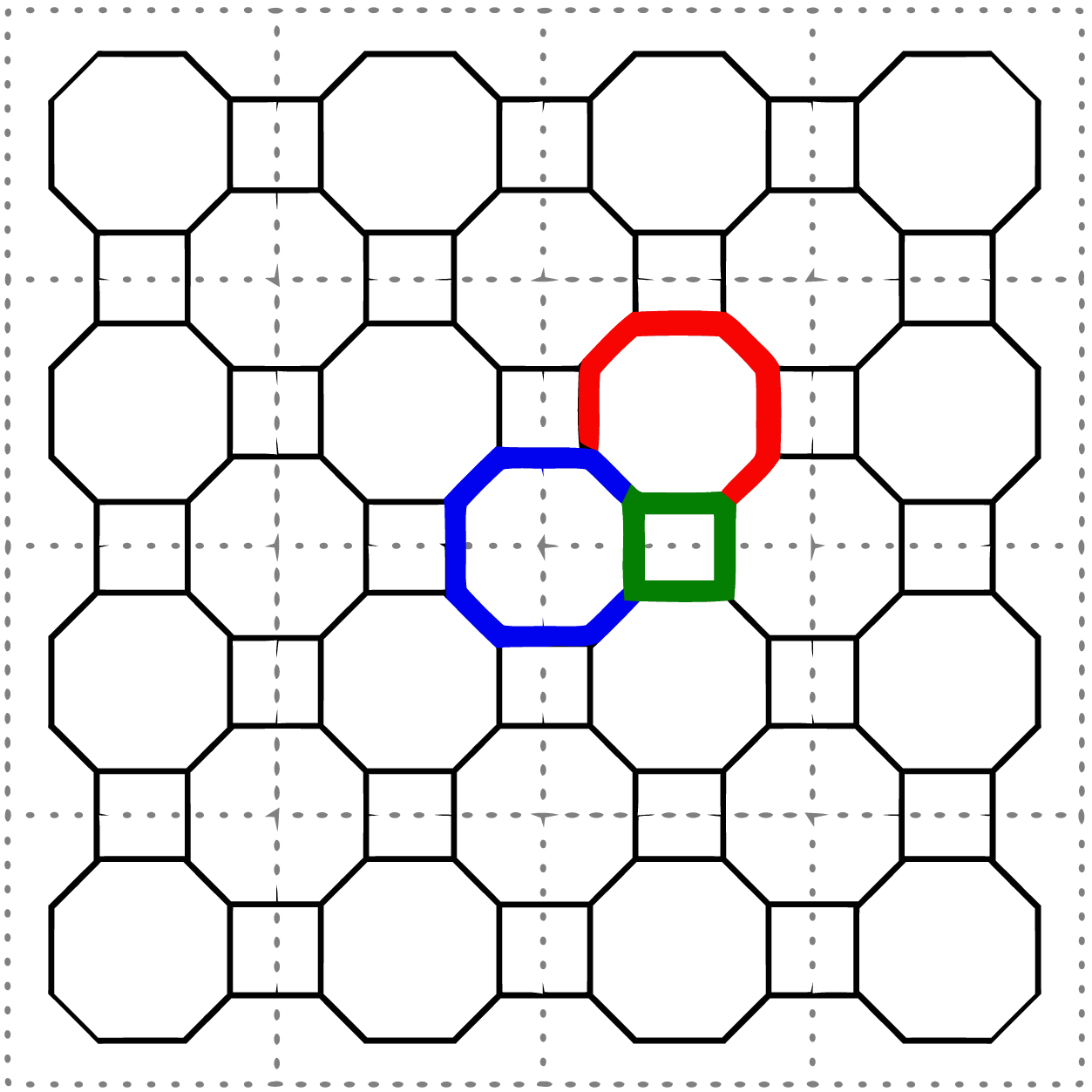}}&\\
&&&\\
 $P^{02}$& $P^{12}$ & $P^{012}$&
\end{tabular}\caption{The Wythoffians derived from $\{4,4\}$.}\label{4,4}\end{center}\end{figure}

The symmetry group of the regular apeirohedron $\{\infty,4\}_4$ is given by $G(\{\infty,4\}_4)=\langle r_0,r_1,r_2\rangle$, where $r_{0}=s_{0}s_{2}$, $r_{1}:=s_{1}$, $r_{2}:=s_{2}$ and $s_0,s_1,s_2$ are the generators of $G(\{4,4\})$. Since the center of the point reflection $r_0$ lies on the reflection line of $r_2$, every point held invariant by $r_0$ is also invariant under $r_2$ so there is no polyhedron $P^2$ or $P^{12}$ in this case. For pictures of the Wythoffians of $\{\infty,4\}_4$ see Figure~\ref{infty,4_4}.

The Wythoffian $P^0$ is the regular apeirohedron $\{\infty,4\}_4$ itself. Its 2-faces are apeirogons which appear as infinite zigzags whose consecutive edges meet at an angle of $\frac{\pi}{2}$. Four apeirogons meet at each vertex, giving a square vertex figure with vertex symbol $(\infty_2^4)$.

The apeirohedron $P^1$ only has two types of base faces. They are linear apeirogons of type $F_2^{\{0,1\}}$ and convex squares of type $F_2^{\{1,2\}}$. About each vertex there is an apeirogon, a square, an apeirogon, and a square, with vertex symbol $(\infty.4_c.\infty.4_c)$. The apeirogons dissect the plane into squares, exactly half of which are the square faces of type $F_2^{\{1,2\}}$. The vertex figure is a crossed quadrilateral. All faces of this Wythoffian are regular polygons so this shape is a uniform apeirohedron with squares and linear apeirogons as faces. 

The apeirohedron $P^{01}$ has finite and infinite faces. The apeirogonal faces are of type $F_2^{\{0,1\}}$, each of which is a truncated zigzag. The finite faces of this apeirohedron are convex squares of type $F_2^{\{1,2\}}$. The vertex symbol is $(4_c.t\infty_2.t\infty_2)$ and the resulting vertex figure is an isoceles triangle (recall that $t$ indicates truncation). The truncated zigzags are not regular apeirogons so this Wythoffian is not a uniform apeirohedron. 

The Wythoffian $P^{02}$ is an apeirohedron whose faces are regular zigzags of type $F_2^{\{0,1\}}$ where the angle between consecutive edges is greater than $\frac{\pi}{2}$, convex squares of type $F_2^{\{1,2\}}$, and crossed quadrilaterals of type $F_2^{\{0,2\}}$. The vertex figure is a convex quadrilateral with vertex symbol $(4_{\,\bowtie}.4_c.4.\infty_2)$. The crossed quadrilaterals are not regular so this is not a uniform apeirohedron. 

The final Wythoffian is $P^{012}$. There are apeirogonal faces of type $F_2^{\{0,1\}}$ which are truncated zigzags. There are also convex octagonal faces of type $F_2^{\{1,2\}}$ (truncated squares) and crossed quadrilaterals of type $F_2^{\{0,2\}}$. There is one apeirogon, one octagon, and one quadrilateral at each vertex yielding a triangular vertex figure with vertex symbol  $(4_{\,\bowtie}.8_c.t\infty_2)$.  The truncated zigzags and crossed quadrilaterals are not regular polygons so the polyhedron is not uniform.

\begin{figure}[h]
\begin{center}
\begin{tabular}{ccc}
\raisebox{-\totalheight}{ \includegraphics*[clip, scale = .3]{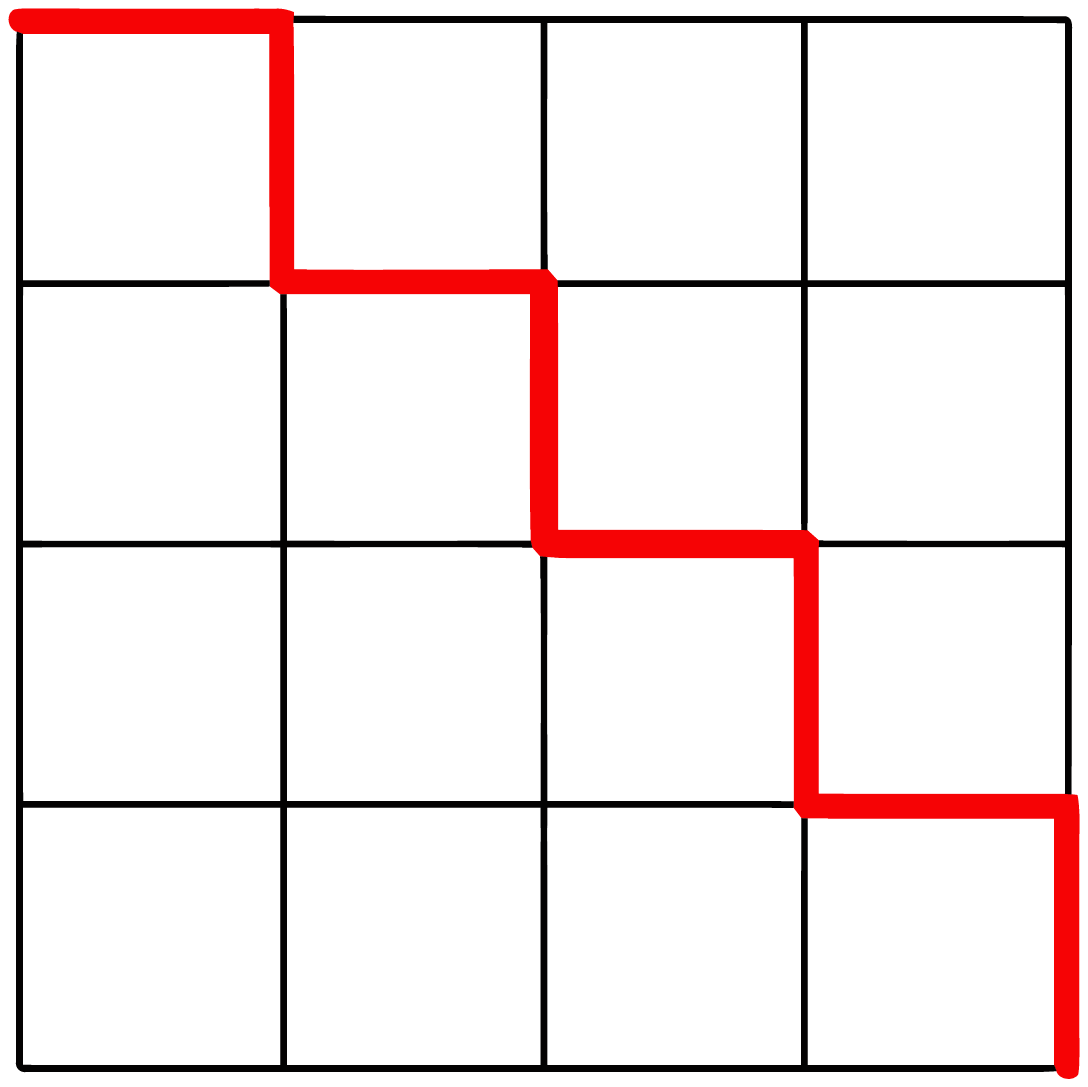}}& \raisebox{-1.1\totalheight}{ \includegraphics*[clip, scale = .3]{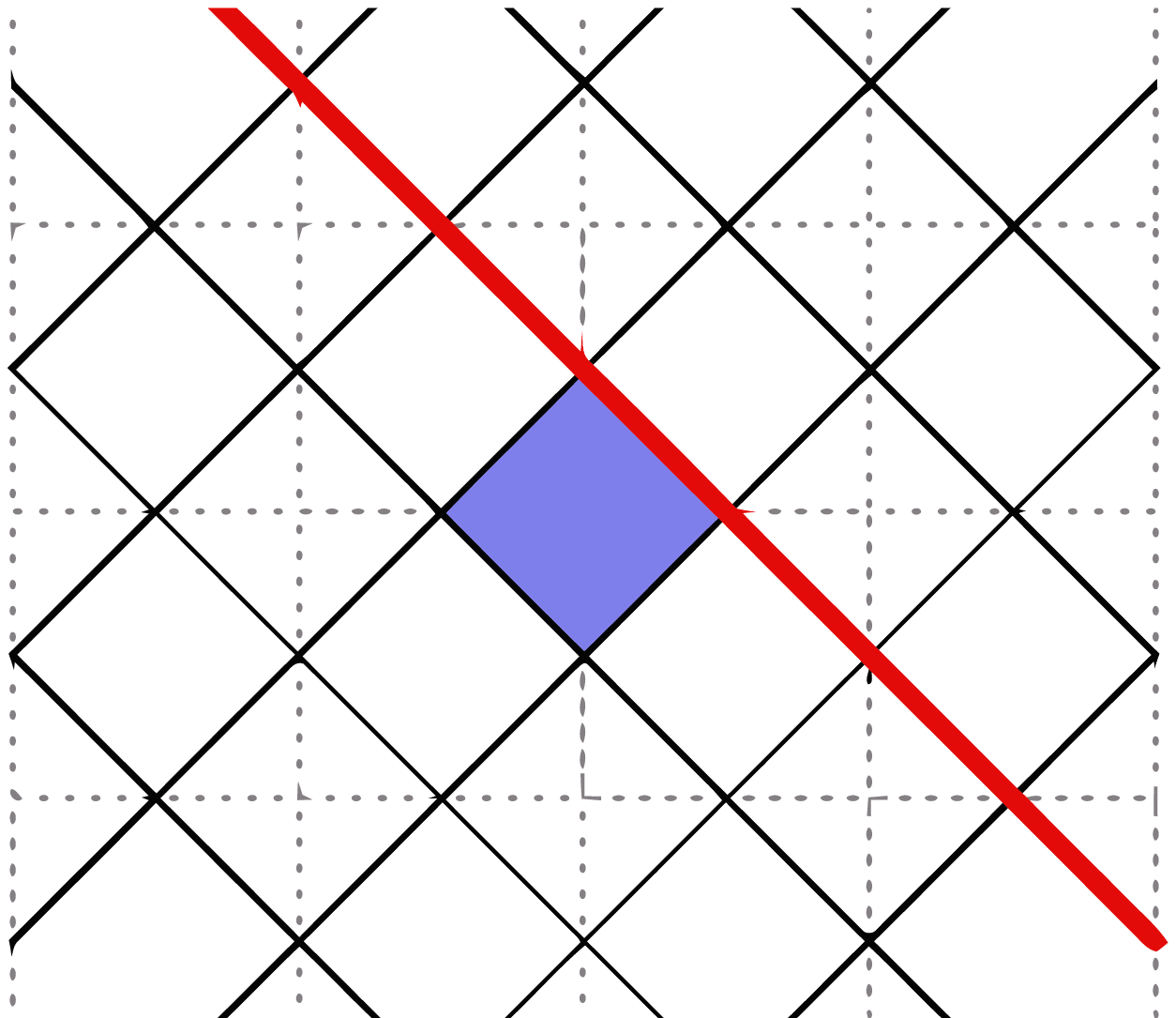}}&\raisebox{-\totalheight}{ \includegraphics*[clip, scale = .3]{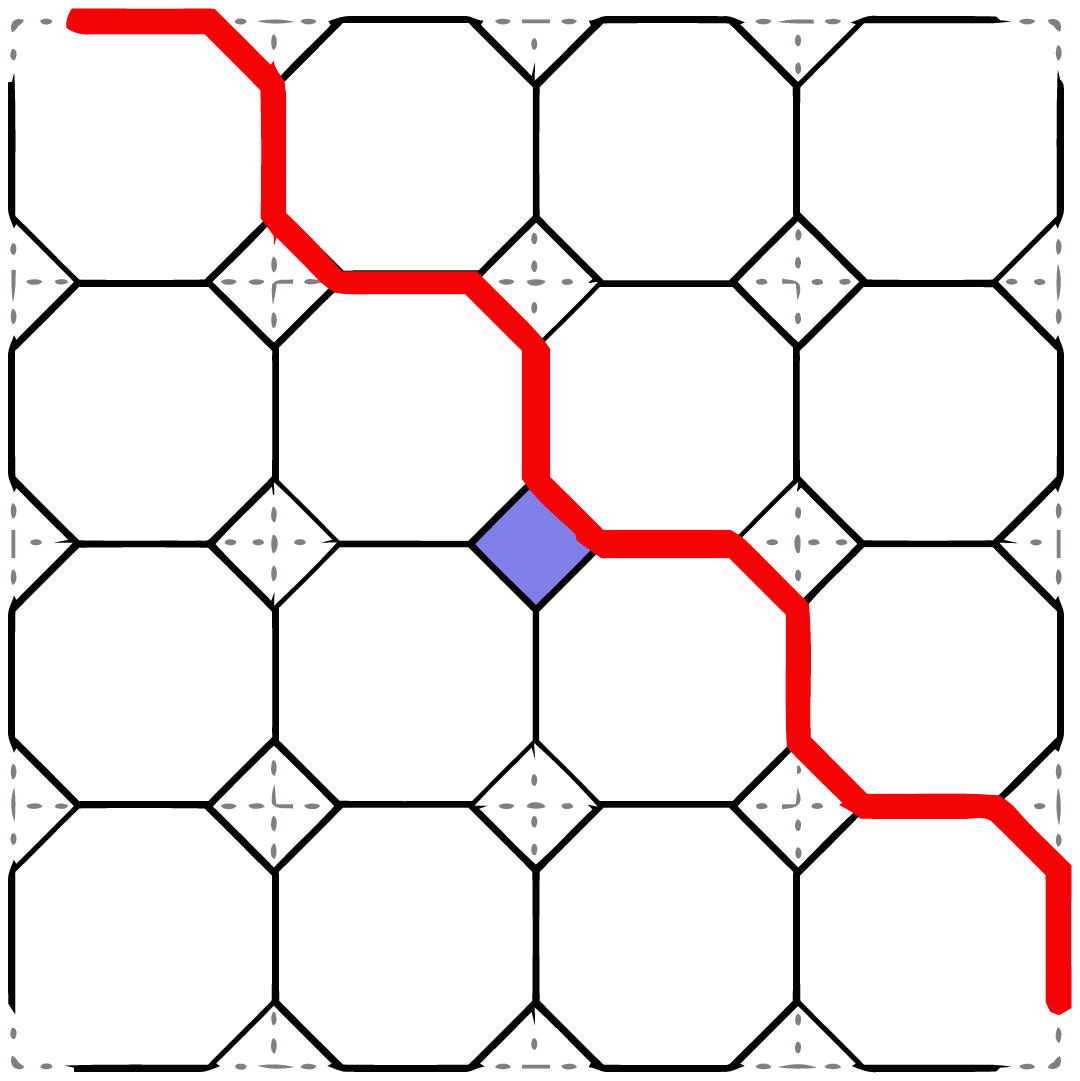}}\\
&&\\
$P^0$&$P^1$&$P^{01}$\\
 \raisebox{-\totalheight}{ \includegraphics*[clip, scale = .3]{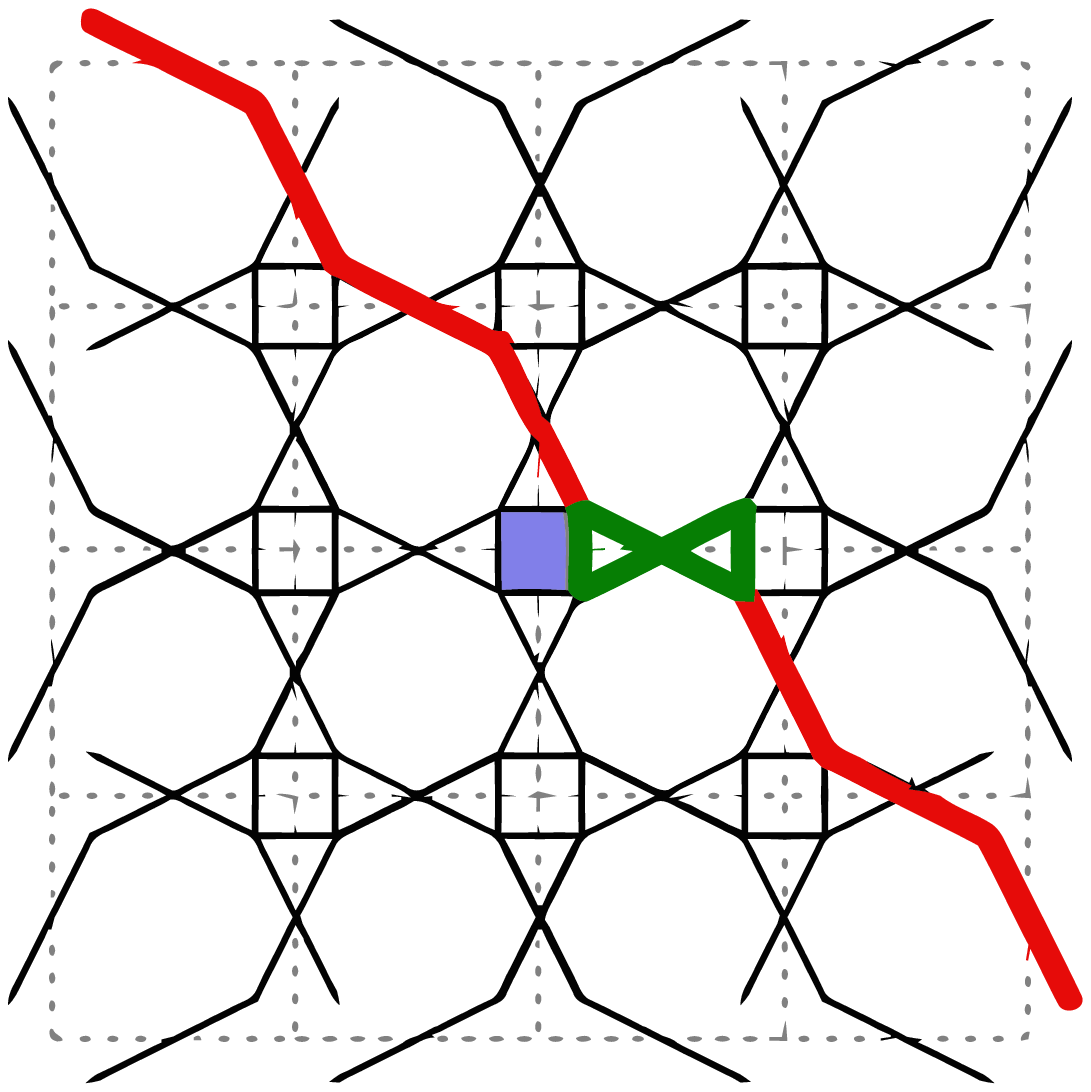}}&\raisebox{-\totalheight}{ \includegraphics*[clip, scale = .3]{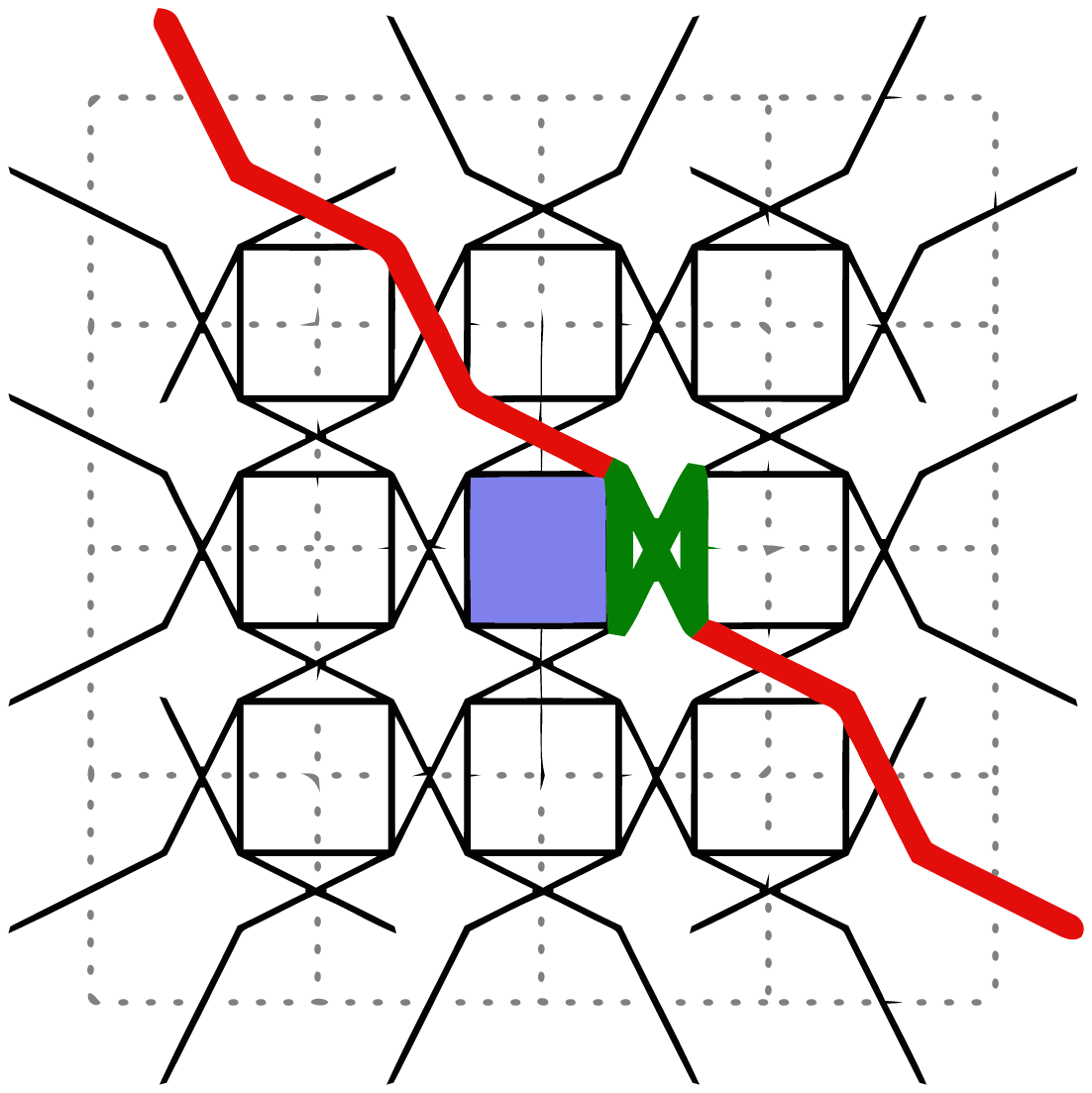}}&\\
&&\\
$P^{02}$&$P^{012}$ 
\end{tabular}\caption{The Wythoffians derived from $\{\infty,4\}_4$.}\label{infty,4_4}\end{center}\end{figure}

\subsection{Blended polyhedra derived from the square tiling}

Next we investigate the Wythoffians of the regular polyhedra $\{4,4\}\#\{\,\}$ and $\{4,4\}\#\{\infty\}$, the blends of the square tiling $\{4,4\}$ with a line segment $\{\,\}$ or linear apeirogon $\{\infty\}$, respectively, as well as their Petrie duals $\{\infty,4\}_{4}\#\{\,\}$ and $\{\infty,4\}_{4}\#\{\infty\}$. Suppose the symmetry groups of $\{4,4\}$, $\{\,\}$ and $\{\infty\}$ are given by $G(\{4,4\})=\langle s_0,s_1,s_2\rangle$, $G(\{\,\})=\langle t_0\rangle$ and $G(\{\infty\})=\langle t_0,t_1\rangle$, each with all generators viewed as plane reflections in $\mathbb{E}^3$. Note that the reflection planes for $s_0$, $s_1$, $s_2$ are perpendicular to the reflection planes for $t_0$ or $t_0,t_1$ (which are parallel to one another), respectively.

 In general the projection of the Wythoffians in this section onto the reflection plane of $t_0$  is congruent to a Wythoffian of $\{4,4\}$ or $\{\infty,4\}_4$.  In some instances if the initial vertex is chosen from the boundary of the fundamental region, the projection of the Wythoffian of the blended polyhedron will no longer appear as a Wythoffian of $\{4,4\}$ or $\{\infty,4\}_4$.  Specifically, the Wythoffians $P^{01}$, $P^{02}$, and $P^{012}$ of $\{4,4\}\#\{\,\}$ and $\{\infty,4\}_4\#\{\,\}$ will not project onto the reflection plane of $t_0$ as the Wythoffians of $\{4,4\}$ and $\{\infty,4\}_4$, respectively, if the initial vertex lies in the reflection plane of $s_0$.  For $\{4,4\}\#\{\infty\}$ and $\{\infty,4\}_4\#\{\infty\}$, if the initial vertex lies in the reflection plane of $s_0$ then $P^{01}$, $P^{02}$, and $P^{012}$ will not project onto Wythoffians of $\{4,4\}$ and $\{\infty,4\}_4$, respectively.  Similarly, for these two blends, if the initial vertex lies in the reflection plane of $s_1$ then $P^{01}$, $P^{12}$, and $P^{012}$ will not project onto Wythoffians of $\{4,4\}$ and $\{\infty,4\}_4$, respectively.  In all other cases discussed below the Wythoffians project onto Wythoffians of $\{4,4\}$ or $\{\infty,4\}_4$. 

The first apeirohedron we examine is $\{4,4\}\#\{\,\}$, which is isomorphic to $\{4,4\}$ and combinatorially self-dual. 
Its symmetry group is $G(\{4,4\}\#\{\,\})=\langle r_0,r_1,r_2\rangle$ with $r_{0}:=s_{0}t_{0}$, $r_{1}:=s_1$ and $r_{2}:=s_2$. Here, the generator $r_0$ is a half-turn and the generators $r_1$ and $r_2$ are plane reflections. Note that a generic apeirohedron $\{4,4\}\#\{\,\}$ is not geometrically self-dual; in fact, reversing the order of the generators of the group and running Wythoff's construction does not generally produce an apeirohedron similar to the original one.

Some care will have to be taken in our choice of initial vertex to ensure an interesting Wythoffian. If a point, $v$, is invariant under $t_0$ then the Wythoffian of $\{4,4\}\#\{\,\}$ with initial vertex $v$ is the same as the (planar) Wythoffian of $\{4,4\}$ with initial vertex $v$. For the following Wythoffians assume that none of the initial vertex choices are invariant under~$t_0$, and consequently we will not look at any initial vertices which are invariant under $r_0$. This excludes $P^{1}$, $P^{2}$, and $P^{12}$ as geometric Wythoffians. (Note, however, that by the combinatorial self-duality of $\{4,4\}\#\{\,\}$ there are abstract Wythoffians of these types isomorphic to $P^{1}$, $P^{0}$, and $P^{01}$, respectively.) All initial vertices are chosen from the fundamental region corresponding to $\{4,4\}\#\{\,\}$ which is a one-sided infinite cylinder over a triangle formed as the union of a pair of $0$-adjacent triangles in the barycentric subdivision of $\{4,4\}$. For pictures of the Wythoffians, see Figure \ref{4,4 line}.

The first Wythoffian, $P^0$, is $\{4,4\}\#\{\,\}$ itself. Its 2-faces are all skew squares, $\{4\}\#\{\,\}$, of type $F_2^{\{0,1\}}$. Four faces meet at each vertex, yielding a vertex symbol $(4_s^4)$ and a convex square as the vertex figure. The projection of this Wythoffian, that is, of $\{4,4\}\#\{\,\}$, onto the reflection plane of $t_0$ appears as $\{4,4\}$. 

In the next apeirohedron, $P^{01}$, the faces of type $F_2^{\{0,1\}}$ are skew octagons (truncated skew squares) and the faces of type $F_2^{\{1,2\}}$ are convex squares. Two octagons and one convex square meet at each vertex giving an isosceles triangle as a vertex figure with vertex symbol $(4.8_s^2)$. The truncated skew  squares are not regular so this is not a uniform apeirohedron. The projection of this Wythoffian onto the reflection plane of $t_0$ appears as the Wythoffian $P^{01}$ of $\{4,4\}$. 

In the apeirohedron $P^{02}$, the faces of type $F_2^{\{0,1\}}$ are skew squares, the faces of type $F_2^{\{1,2\}}$ are convex squares, and the faces of type $F_2^{\{0,2\}}$ are convex rectangles. Cyclically, about each vertex, there is a skew square, a rectangle, a square, and a rectangle, giving the vertex symbol $(4_s.4_c.4_c.4_c)$. The resulting vertex figure is a convex quadrilateral. For a specifically chosen initial vertex the faces of type $F_2^{\{0,2\}}$ are squares and the Wythoffian is a uniform apeirohedron with one kind of planar square and one kind of non-planar square. The projection of this Wythoffian onto the reflection plane of $t_0$ appears as the Wythoffian $P^{02}$ of $\{4,4\}$. 

For the Wythoffian $P^{012}$ the faces of type $F_2^{\{0,1\}}$ are skew octagons (truncated skew squares), the faces of type $F_2^{\{1,2\}}$ are convex octagons (truncated squares), and the faces of type $F_2^{\{0,2\}}$ are convex rectangles. At each vertex there is one face of each type, yielding a vertex symbol $(4_c.8_s.8_c)$ and a triangular vertex figure. The truncated  squares are not regular so the Wythoffian is not a uniform apeirohedron. The projection of this Wythoffian onto the reflection plane of $t_0$ appears as the Wythoffian $P^{012}$ of $\{4,4\}$.

\begin{figure}[h]
\begin{center}
\begin{tabular}{cccc}
\raisebox{-\totalheight}{ \includegraphics*[clip, scale = .29]{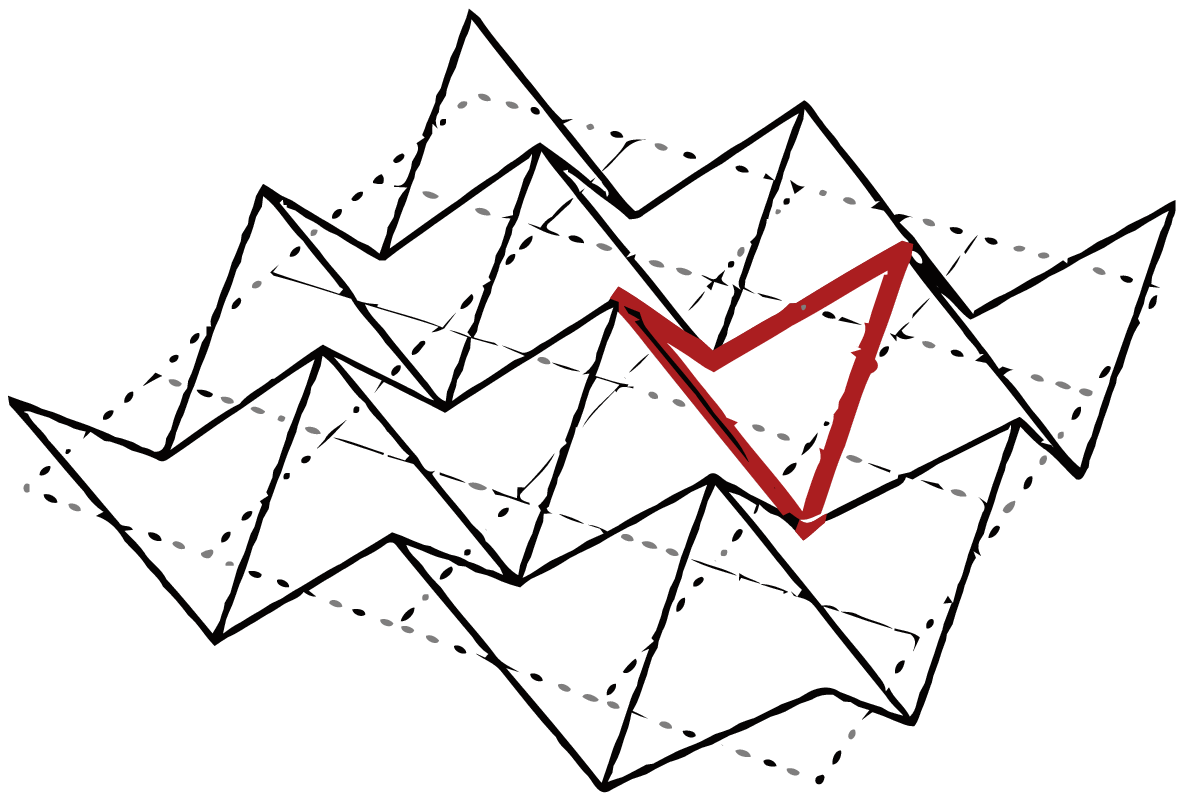}}&\raisebox{-\totalheight}{ \includegraphics*[clip, scale = .29]{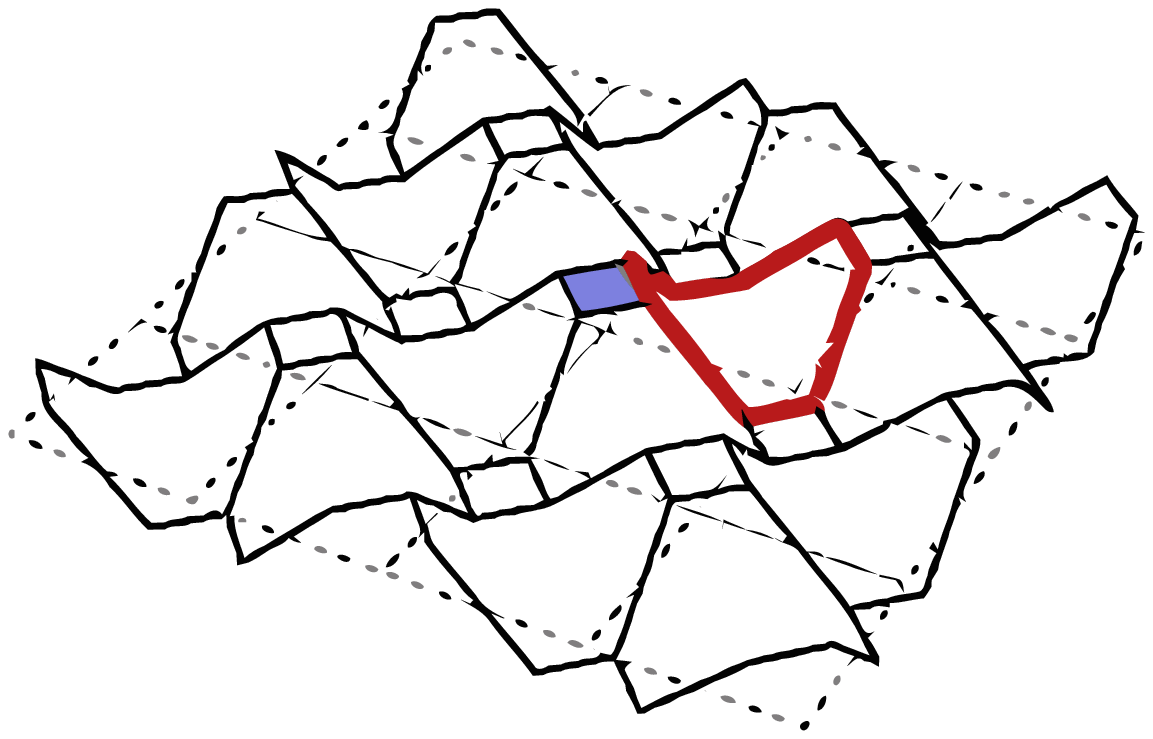}} &\raisebox{-\totalheight}{ \includegraphics*[clip, scale = .29]{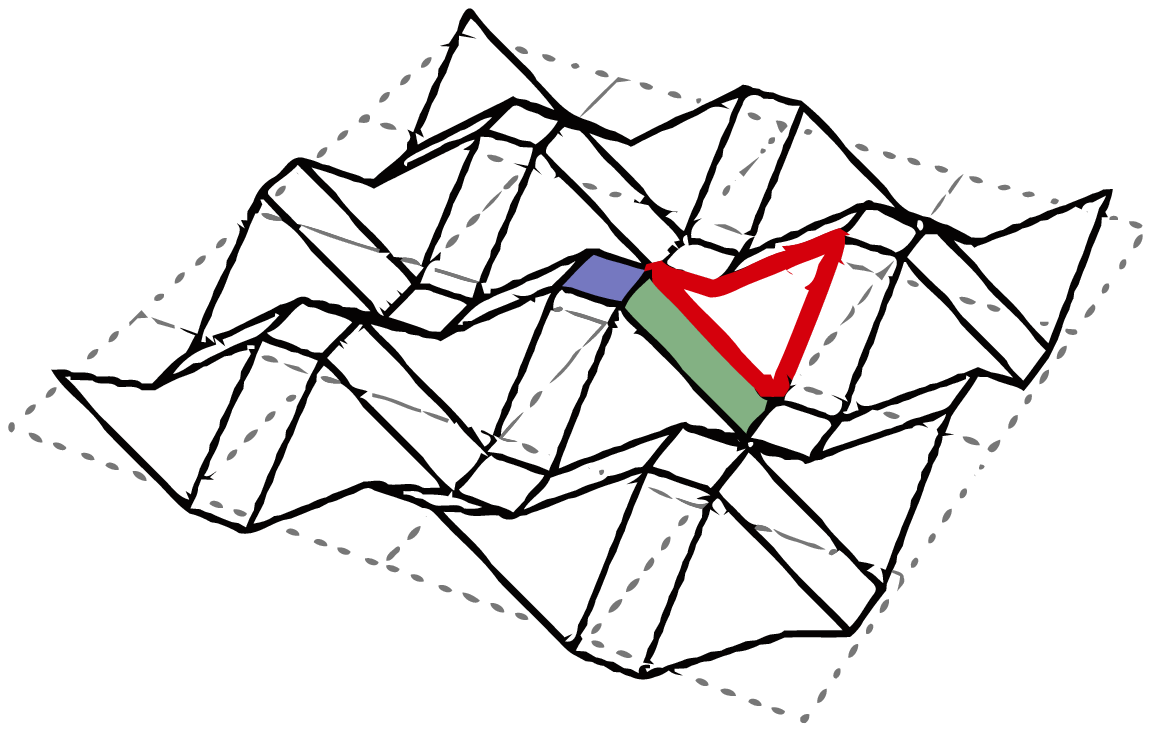}}& \raisebox{-\totalheight}{ \includegraphics*[clip, scale = .29]{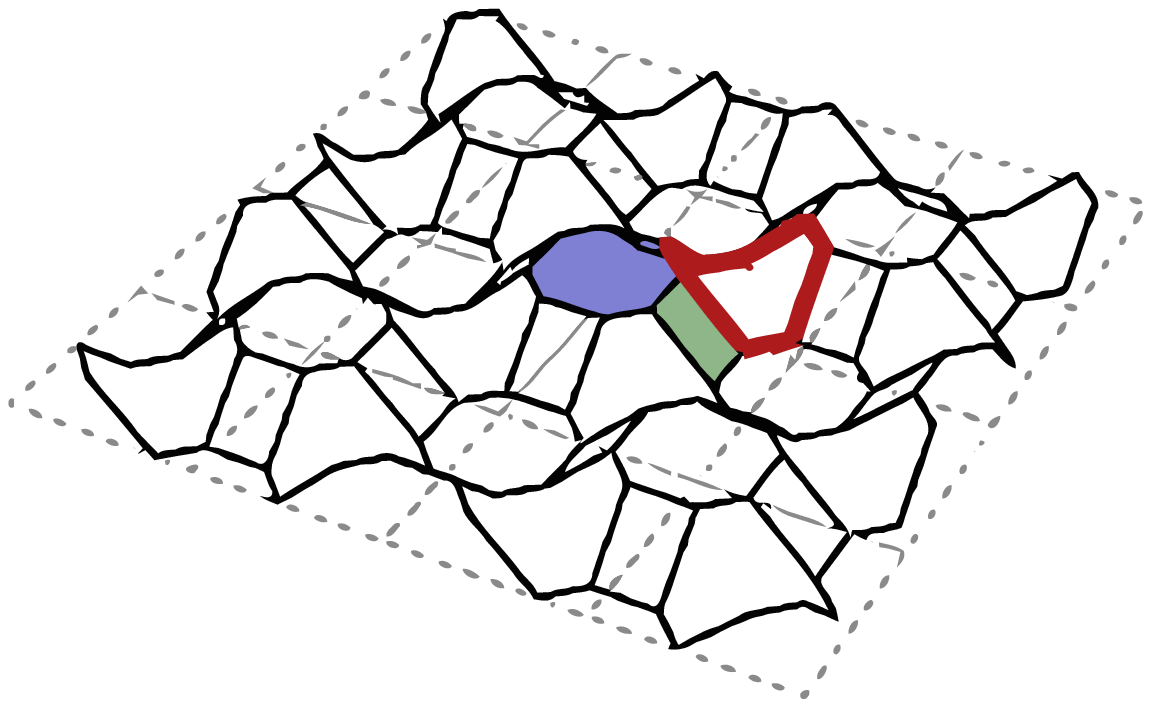}}\\
$P^0$ & $P^{01}$& $P^{02}$& $P^{012}$
\end{tabular}\caption{The Wythoffians derived from $\{4,4\}\#\{\,\}$.}\label{4,4 line}\end{center}\end{figure}

The next regular apeirohedron we examine is $\{\infty,4\}_4\#\{\,\}$, the Petrie-dual of $\{4,4\}\#\{\,\}$, which is isomorphic to $\{\infty,4\}_4$. The symmetry group is $G(\{\infty,4\}_4\#\{\,\})=\langle r_0,r_1,r_2\rangle$ with $r_{0}:=s_{0}t_{0}s_{2}$, $r_{1}:=s_1$, $r_{2}:=s_2$, and $s_0,s_1,s_2,t_0$ as above. Here $r_0$ is a point reflection (through the midpoint of the base edge of the underlying plane tessellation $\{4,4\}$) and $r_1$ and $r_2$ are plane reflections. Individually $s_0,\ s_1,\ s_2$, and $t_0$ are plane reflections in $\mathbb{E}^3$. 

The initial vertices we use come from the same fundamental region as for $\{4,4\}\#\{\,\}$. As with $\{4,4\}\#\{\,\}$, any initial vertex left invariant by $t_0$ will result in the Wythoffian being the same as the corresponding Wythoffian derived from the planar $\{\infty,4\}_4$. Assume all choices of initial vertex are transient under $t_0$, and consequently we will not look at any initial vertices which are invariant under $r_0$. This excludes $P^{1}$, $P^{2}$, and $P^{12}$. For pictures of the Wythoffians, see Figure \ref{infty,4_4 line}.

The first Wythoffian, $P^0$, is the regular apeirohedron $\{\infty,4\}_4\#\{\,\}$ itself whose faces are regular zigzag apeirogons, $\{\infty\}\#\{\,\}$, such that each edge is bisected by the reflection plane of $t_0$. Four of these apeirogons meet at each vertex resulting in a convex, square vertex figure with vertex symbol $(\infty_2^4)$. The projection of this Wythoffian onto the reflection plane of $t_0$ appears as $\{\infty,4\}_4$. 

In the apeirohedron $P^{01}$, the faces of type $F_2^{\{0,1\}}$ are apeirogons which appear as truncations of the faces of $\{\infty,4\}_4\#\{\,\}$, while the faces of type $F_2^{\{1,2\}}$ are convex squares which lie parallel to the reflection plane of $t_0$. Two apeirogons and one square meet at each vertex, yielding the vertex symbol $(4.t\infty_2.t\infty_2)$. The vertex figure is an isosceles triangle. The truncated zigzags are not regular so this Wythoffian is not a uniform apeirohedron. The projection of this Wythoffian onto the reflection plane of $t_0$ appears as the Wythoffian $P^{01}$ of $\{\infty,4\}_4$. 

With the apeirohedron $P^{02}$ the faces of type $F_2^{\{0,1\}}$ are regular zigzag apeirogons which are bisected by the reflection plane of $t_0$, the faces of type $F_2^{\{1,2\}}$ are convex squares parallel to the reflection plane of $t_0$, and the faces of type $F_2^{\{0,2\}}$ are planar crossed quadrilaterals which intersect the reflection plane of $t_0$. Cyclically at each vertex there is an apeirogon, a crossed quadrilateral, a square, and a crossed quadrilateral, resulting in the vertex-symbol  $(4_{\,\bowtie}.4_c.4_{\,\bowtie}.\infty_2)$.  The vertex figure is a convex quadrilateral. The crossed quadrilaterals are not regular so this apeirohedron is not uniform. The projection of this Wythoffian onto the reflection plane of $t_0$ appears as the Wythoffian $P^{02}$ of $\{\infty,4\}_4$. 

For the final apeirohedron, $P^{012}$, the faces of type $F_2^{\{0,1\}}$ are truncated zigzag apeirogons. The faces of type $F_2^{\{1,2\}}$ are convex octagons (truncated squares) which lie parallel to the reflection plane of $t_0$, and the faces of type $F_2^{\{0,2\}}$ are crossed quadrilaterals which intersect the reflection plane of $t_0$ at their centers. There is one face of each type at each vertex, so that the vertex symbol is  $(4_{\,\bowtie}.8_c.t\infty_2)$  and the vertex figure is a triangle. The truncated zigzags and crossed quadrilaterals are not regular so this Wythoffian is not a uniform apeirohedron. The projection of this Wythoffian onto the reflection plane of $t_0$ appears as the Wythoffian $P^{012}$ of $\{\infty,4\}_4$.

\begin{figure}[h]
\begin{center}
\begin{tabular}{cccc}
\raisebox{-\totalheight}{ \includegraphics*[clip, scale = .29]{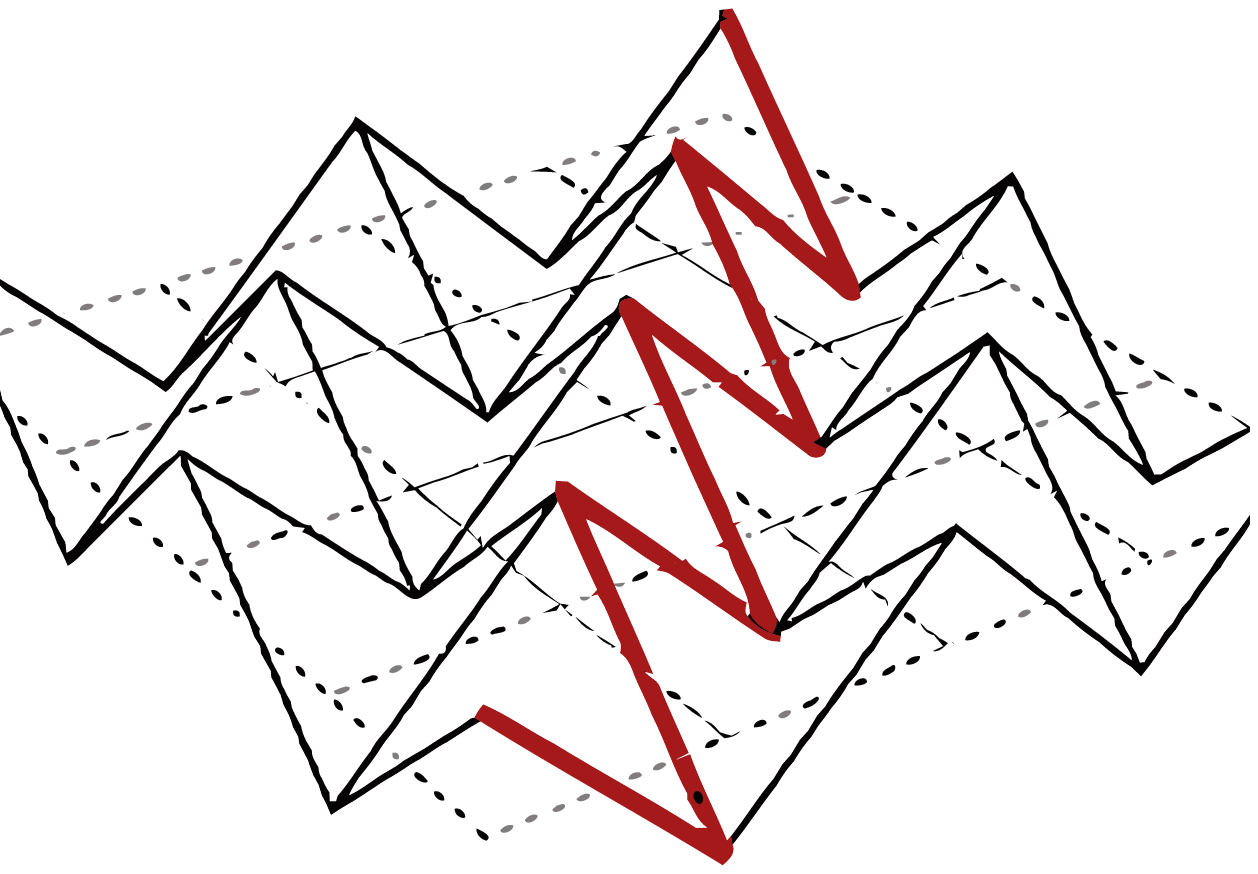}}&\raisebox{-\totalheight}{ \includegraphics*[clip, scale = .29]{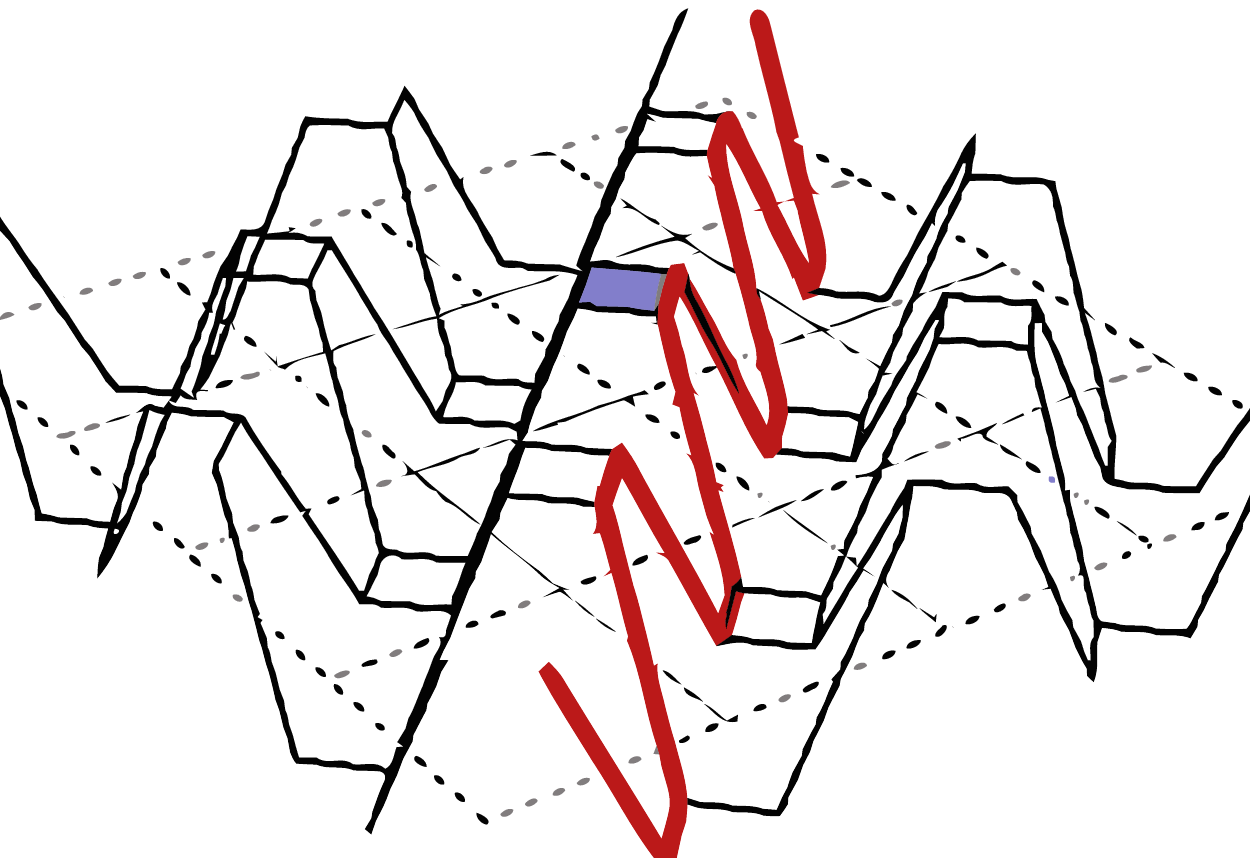}}& \raisebox{-\totalheight}{ \includegraphics*[clip, scale = .29]{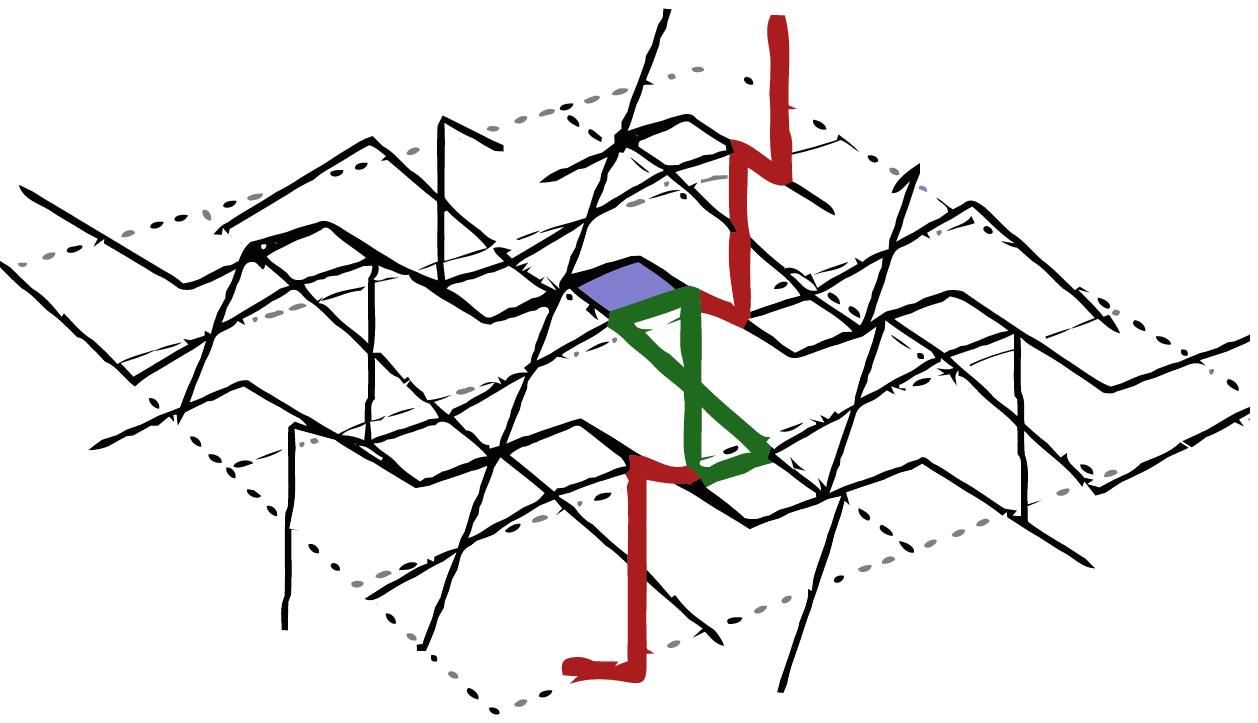}}&\raisebox{-\totalheight}{ \includegraphics*[clip, scale = .29]{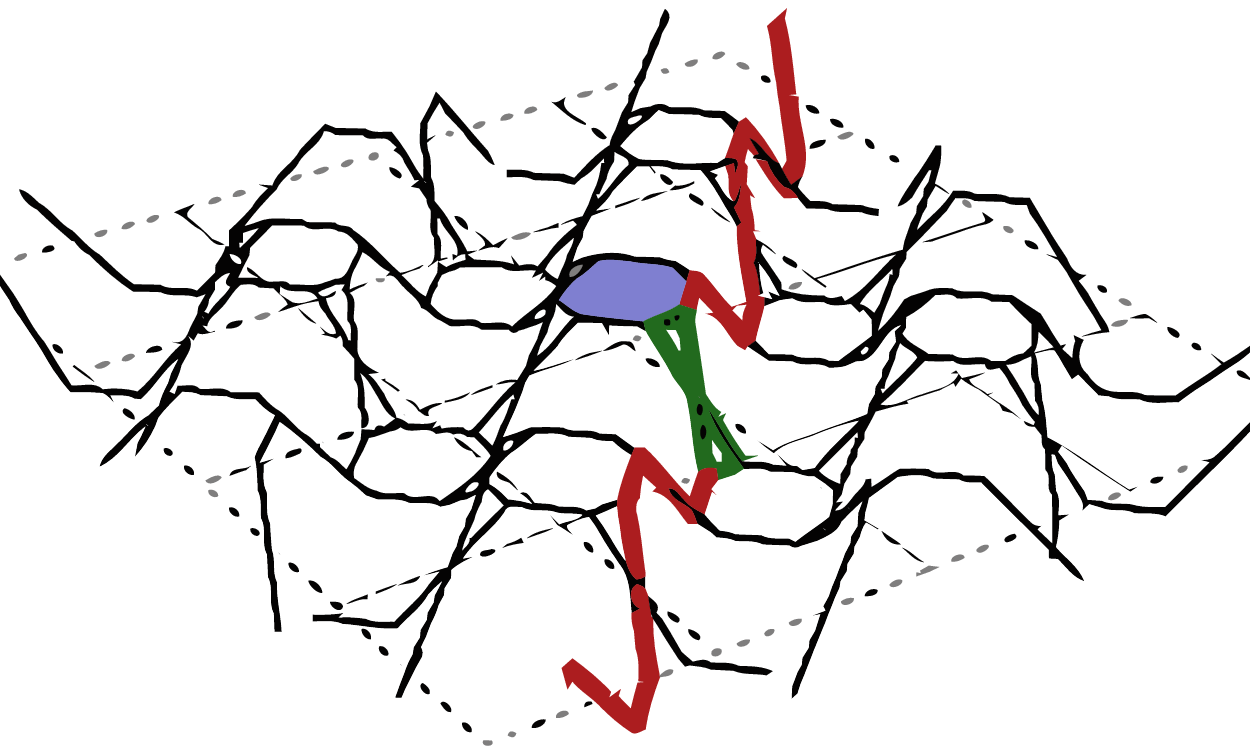}}\\
&&&\\
$P^0$&$P^{01}$&$P^{02}$&$P^{012}$ 
\end{tabular}\caption{The Wythoffians derived from $\{\infty,4\}_4\#\{\,\}$.}\label{infty,4_4 line}\end{center}\end{figure}

Now we consider the blended regular apeirohedron $\{4,4\}\#\{\infty\}$ with symmetry group $G(\{4,4\}\#\{\infty\})=\langle r_0,r_1,r_2\rangle$, where $r_{0}:=s_{0}t_{0}$, $r_{1}:=s_{1}t_{1}$, $r_{2}:=s_2$, and $s_0,s_1,s_2,t_0,t_1$  are as above. Here $r_0$ and $r_1$ are half-turns and $r_2$ is a plane reflection.

In $\mathbb{E}^3$ the reflection planes corresponding to $s_0$, $s_1$, and $s_2$ are orthogonal to the reflection planes corresponding to $t_0$ and $t_1$ which are parallel to one another. There is no point which is invariant under $t_0$ and $t_1$ so this will limit the choice of initial vertex and consequently we will not look at $P^2$. In all cases, any edge of type $F_1^{\{2\}}$ lies parallel to the reflection planes of $t_0$ and $t_1$. All initial vertices are chosen from the fundamental region of $\{4,4\}\#\{\infty\}$. This fundamental region is a right prism over the fundamental triangle of $\{4,4\}$. For pictures of the Wythoffians see Figure \ref{4,4 inf}.

The first Wythoffian, $P^0$, is the regular apeirohedron $\{4,4\}\#\{\infty\}$ itself. Its faces are helical apeirogons spiraling around a cylinder with a square base. Each edge is incident to two helices which spiral upward in opposite orientations. Four helices meet at each vertex resulting in an antiprismatic square vertex figure with vertex symbol $(\infty_4^4)$. The projection of this Wythoffian onto the reflection plane of $t_0$ appears as $\{4,4\}$. 

In the apeirohedron $P^1$ the faces of type $F_2^{\{0,1\}}$ are regular helices over square bases while the faces of type $F_2^{\{1,2\}}$ are antiprismatic squares. At each vertex, in alternating order, there are two helices and two antiprismatic squares, yielding a convex rectangle as a vertex figure with a vertex symbol $(4_s.\infty_4.4_s.\infty_4)$. The faces are all regular polygons so the Wythoffian is a uniform apeirohedron. The projection of this Wythoffian onto the reflection plane of $t_0$ appears as the Wythoffian $P^{1}$ of $\{4,4\}$. 

For $P^{01}$ the faces of type $F_2^{\{0,1\}}$ are helices over an octagon (for initial vertices which lie on a base edge of $\{4,4\}\#\{\infty\}$ these helices are truncations of the helical faces of $\{4,4\}\#\{\infty\}$). The faces of type $F_2^{\{1,2\}}$ are regular squares, for some initial vertices they are skew and for some initial vertices they are convex. There are two helices and one quadrilateral at each vertex resulting in an isosceles triangle vertex figure with vertex symbol $(4.\infty_8^2)$. For a carefully chosen initial vertex the helices are regular helices about octagonal bases and the Wythoffian is uniform. The projection of this Wythoffian onto the reflection plane of $t_0$ appears as the Wythoffian $P^{01}$ of $\{4,4\}$. 

In the apeirohedron $P^{02}$ the faces of type $F_2^{\{0,1\}}$ are regular helices over squares, the faces of type $F_2^{\{1,2\}}$ are convex squares lying parallel to the plane of $t_0$, and the faces of type $F_2^{\{0,2\}}$ are convex rectangles which are not parallel to this plane. Cyclically, about each vertex there is a square, a rectangle, a helix, and a rectangle, with vertex symbol $(4_c.4_c.\infty_4.4_c)$. The resulting vertex figure is a skew quadrilateral. For a carefully chosen initial vertex the faces of type $F_2^{\{0,2\}}$ are regular and the Wythoffian is a uniform apeirohedron. The projection of this Wythoffian onto the reflection plane of $t_0$ appears as the Wythoffian $P^{02}$ of $\{4,4\}$. 

For the Wythoffian $P^{12}$ the faces of the apeirohedron of type $F_2^{\{0,1\}}$ are regular helices over squares and the faces of type $F_2^{\{1,2\}}$ are skew octagons (truncated antiprismatic squares). There are two octagons and one helix meeting at each vertex, yielding $(8_s^2.\infty_4)$ as the vertex symbol and an isosceles triangle as the vertex figure. The truncated antiprismatic squares are not regular so $P^{12}$ is not a uniform apeirohedron. The projection of this Wythoffian onto the reflection plane of $t_0$ appears as the Wythoffian $P^{12}$ of $\{4,4\}$. 

In the apeirohedron $P^{012}$ the faces of type $F_2^{\{0,1\}}$ are apeirogons and appear as helices over octagons (truncated helices over squares). The finite faces are skew octagons of type $F_2^{\{1,2\}}$ (truncated antiprismatic squares) and convex rectangles of type $F_2^{\{0,2\}}$. One face of each type meets at each vertex yielding a triangular vertex figure with vertex symbol $(4_c.8_s.\infty_8)$. The truncated antiprismatic squares are not regular so this Wythoffian is not a uniform apeirohedron. The projection of this Wythoffian onto the reflection plane of $t_0$ appears as the Wythoffian $P^{012}$ of $\{4,4\}$.

\begin{figure}[h]
\begin{center}
\begin{tabular}{cccccc}
\raisebox{-\totalheight}{ \includegraphics*[clip, scale = .29]{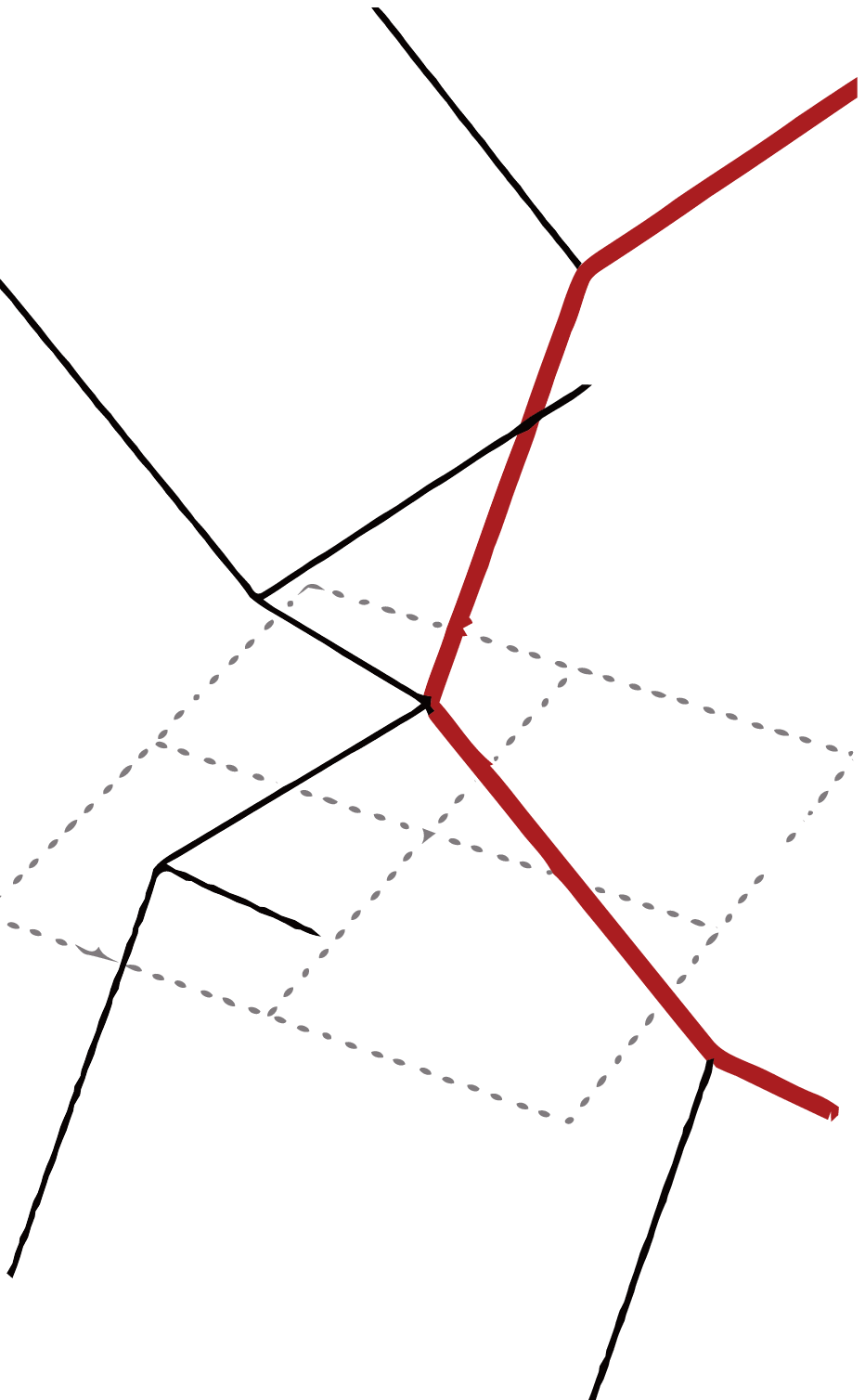}}& \raisebox{-\totalheight}{ \includegraphics*[clip, scale = .29]{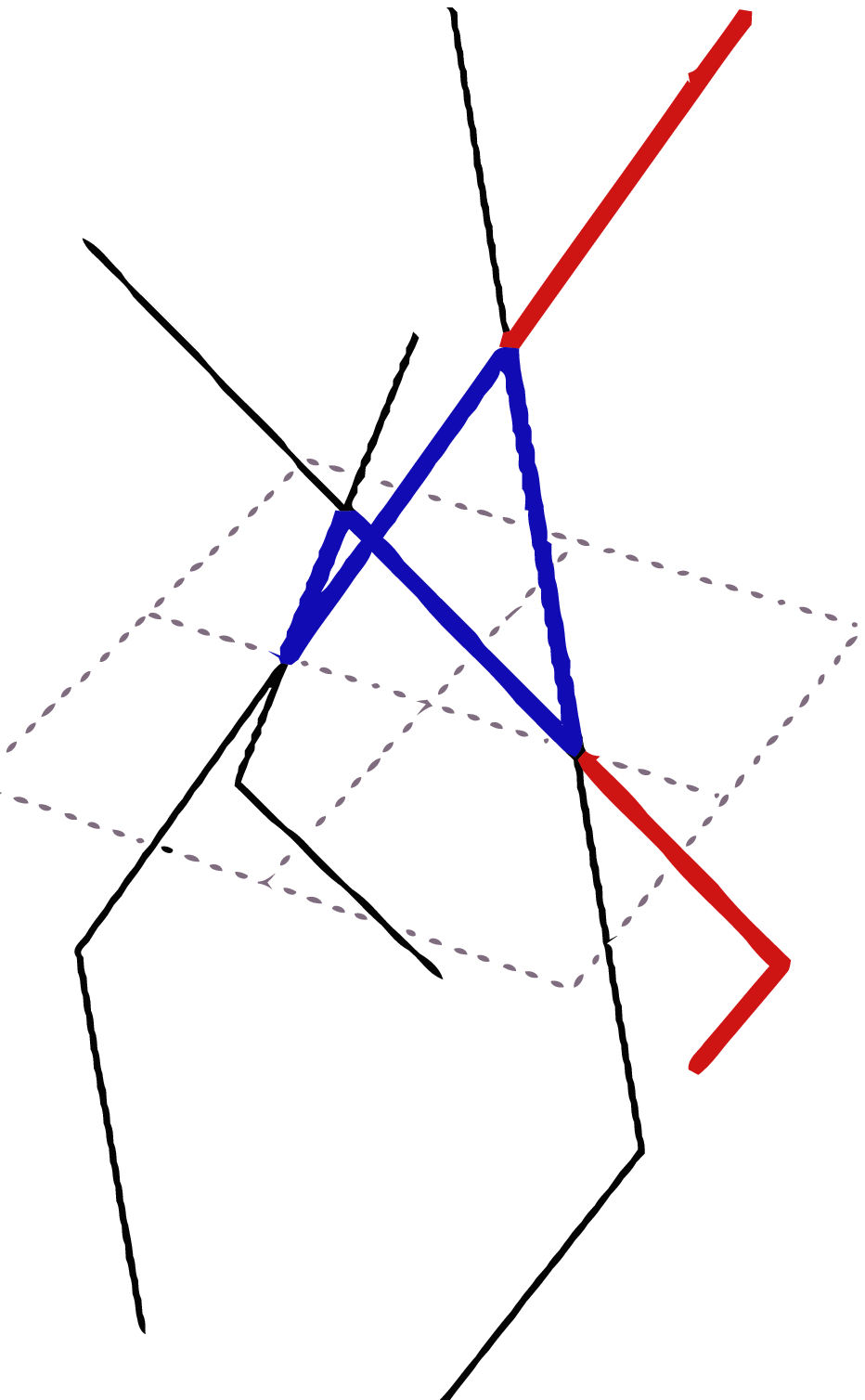}}& \raisebox{-\totalheight}{ \includegraphics*[clip, scale = .29]{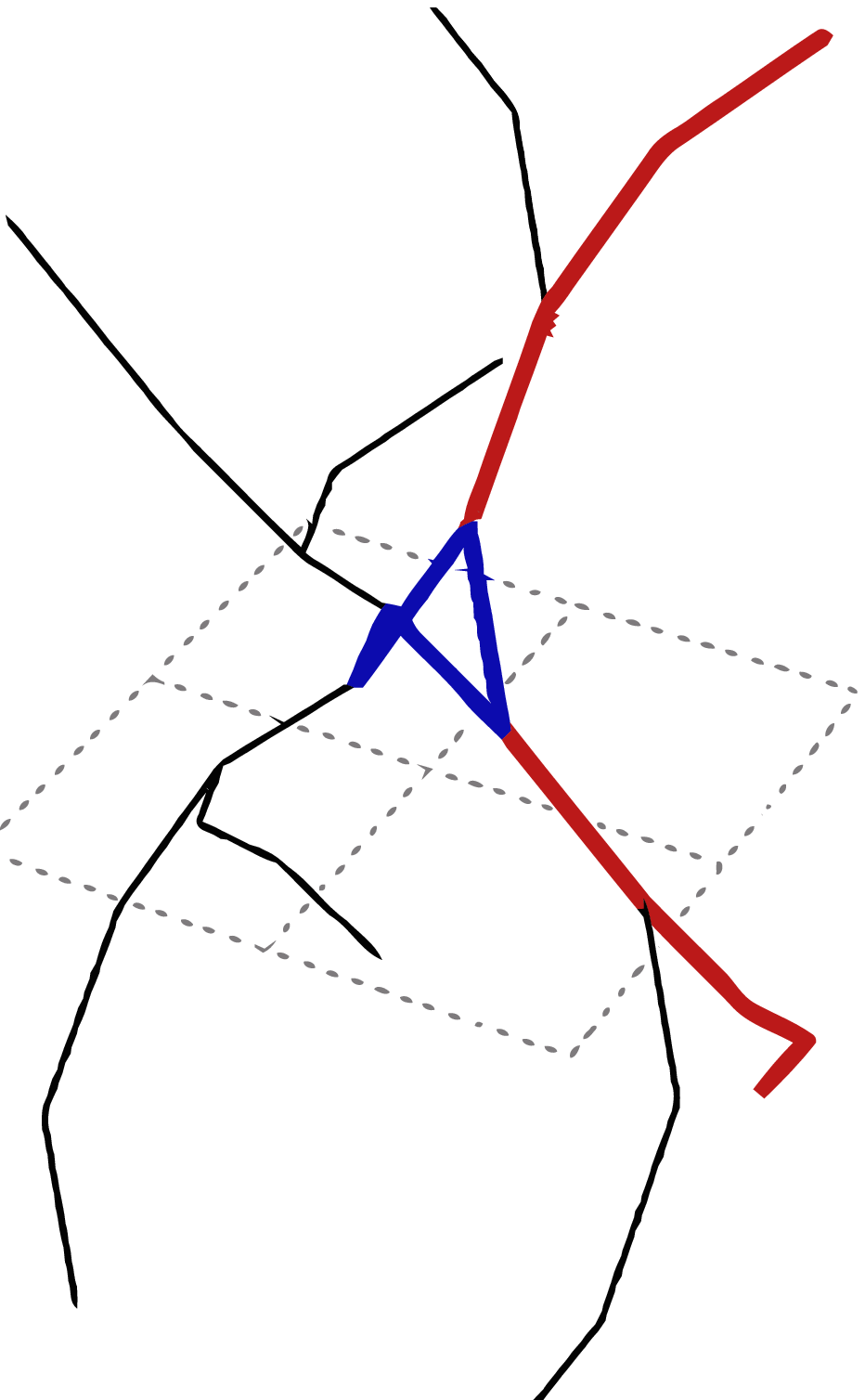}}\\
&&\\
$P^0$&$P^1$&$P^{01}$\\
\raisebox{-\totalheight}{ \includegraphics*[clip, scale = .29]{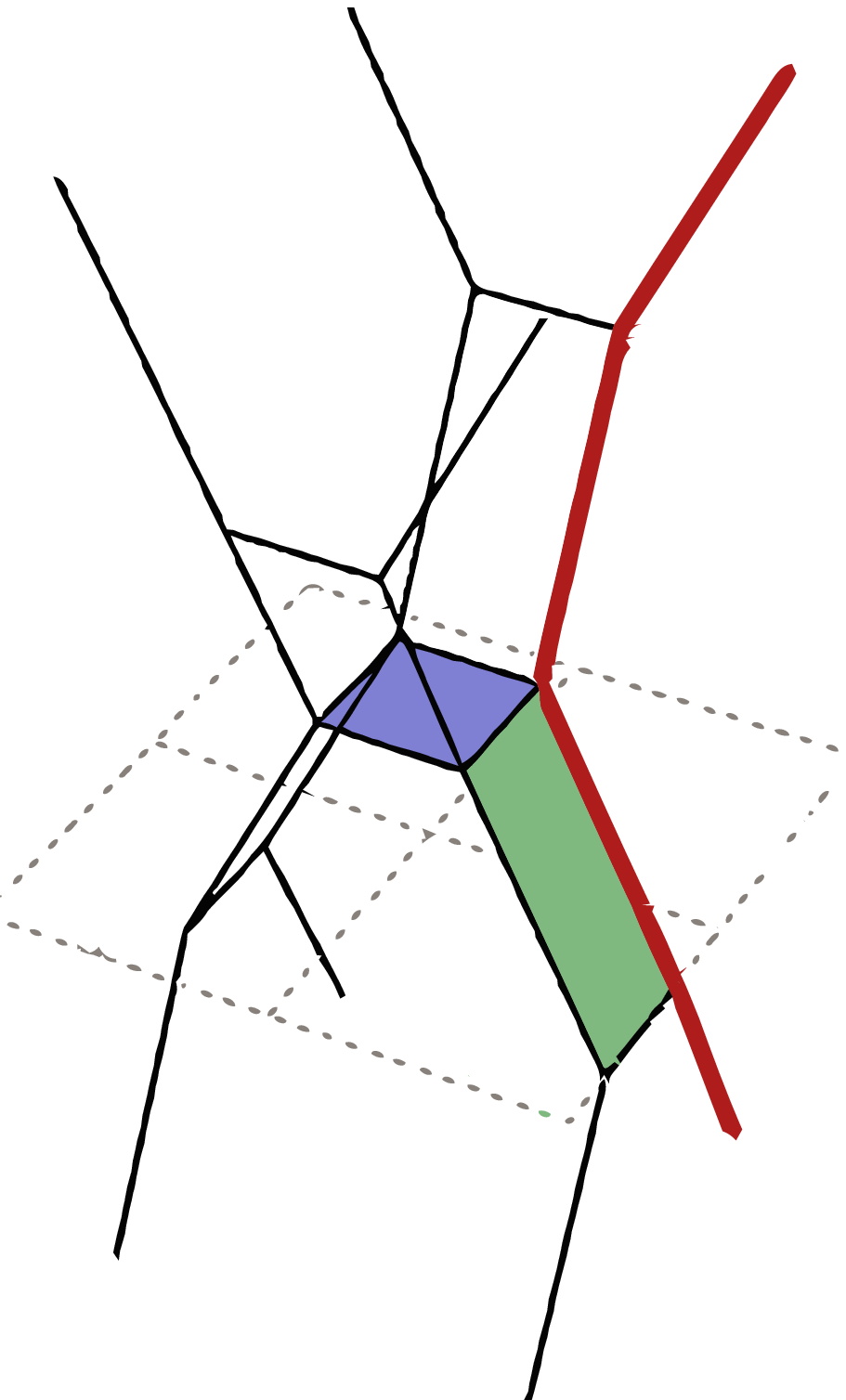}}& \raisebox{-\totalheight}{ \includegraphics*[clip, scale = .29]{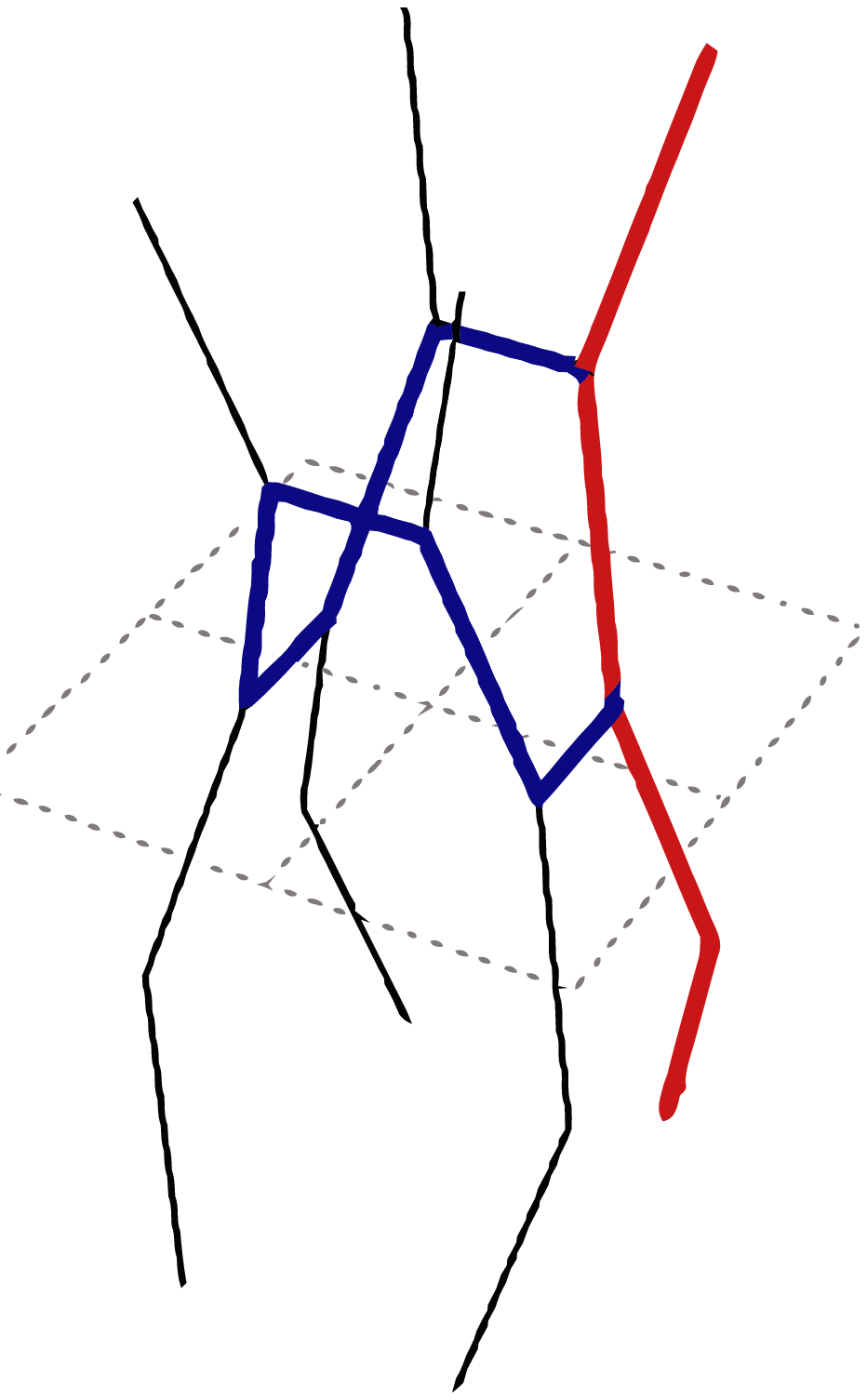}}& \raisebox{-\totalheight}{ \includegraphics*[clip, scale = .29]{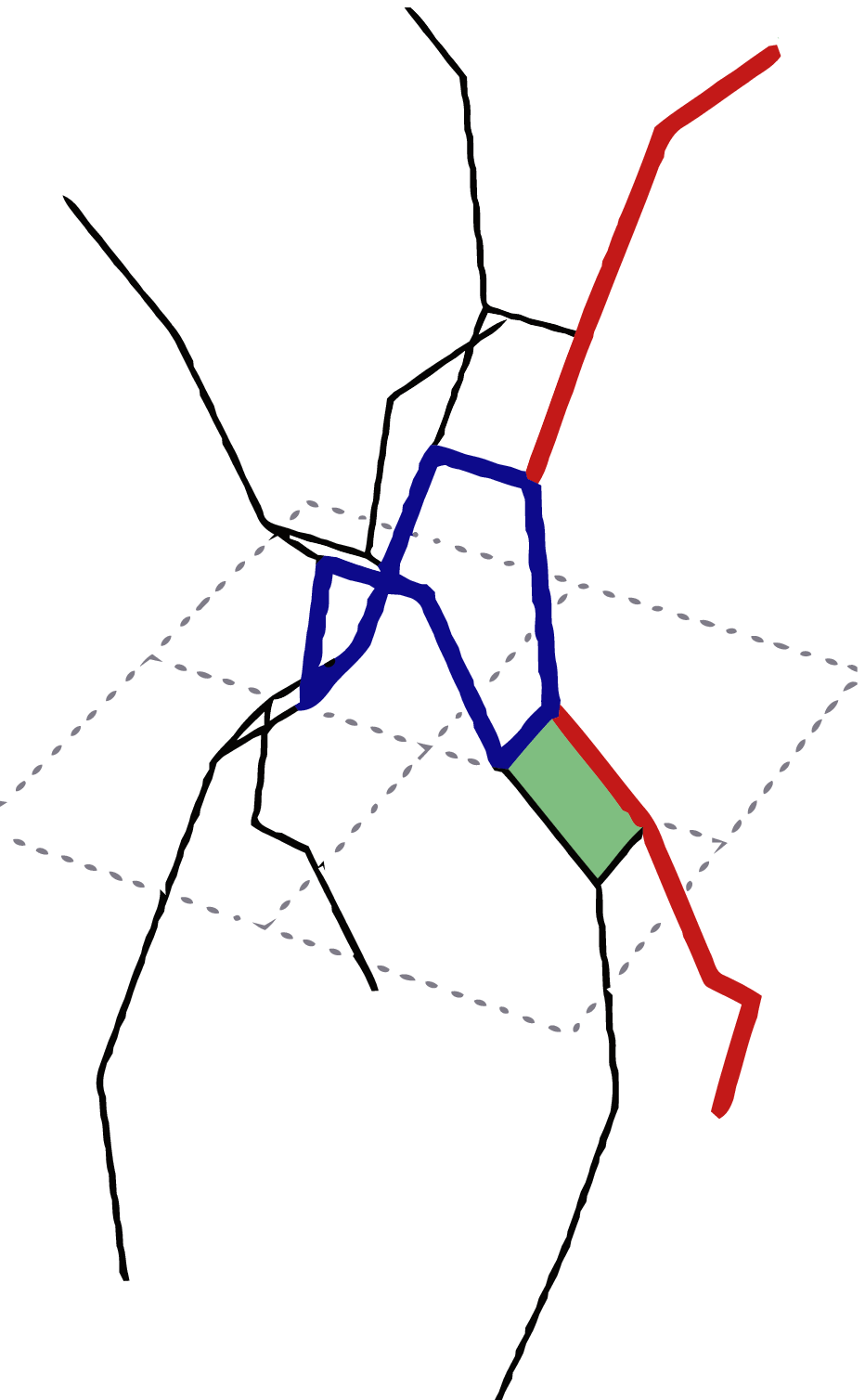}} \\
&&\\
$P^{02}$&$P^{12}$&$P^{012}$
\end{tabular}\caption{The Wythoffians derived from $\{4,4\}\#\{\infty\}$.}\label{4,4 inf}\end{center}\end{figure}

The last geometrically regular blended apeirohedron based on the square tessellation of the plane is $\{\infty,4\}_4\#\{\infty\}$. Letting $s_0,s_1,s_2$ and $t_0,t_1$ be as above, the symmetry group is given by $G(\{\infty,4\}_4\#\{\infty\})=\langle r_0,r_1,r_2\rangle$ with $r_{0}:=s_{0}s_{2}t_{0}$, $r_{1}:=s_1t_1$ and $r_{2}:=s_2$.
Here $r_0$ is a point reflection, $r_1$ is a half-turn, and $r_2$ is a plane reflection. Again there is no point which is invariant under $t_0$ and $t_1$ so this will limit the choice of initial vertex and prevent there being a $P^2$. Additionally, any point which is invariant under $r_0$ is also invariant under~$r_2$, so this excludes $P^{2}$ and $P^{12}$. The initial vertices all come from the fundamental region of $\{4,4\}\#\{\infty\}$.  We further restrict the choice of initial vertex to lie in a base face of $\{\infty,4\}_4\#\{\infty\}$ when applicable ($P^{01}$ and $P^{012}$).  This will ensure that the faces of the Wythoffians will have similar planarity to the faces of $\{\infty,4\}_4\#\{\infty\}$.  For pictures of the Wythoffians see Figure \ref{inf,4_4 inf}.

The first Wythoffian, $P^0$, is the regular apeirohedron $\{\infty,4\}_4\#\{\infty\}$ itself. The faces of type $F_2^{\{0,1\}}$ are regular zigzag apeirogons.   Each apeirogon lies in a plane which crosses through both the reflection planes of $t_0$ and $t_1$.  When projected onto the plane of $t_0$ they appear as planar zigzags. Four zigzags meet at each vertex, yielding the vertex symbol $(\infty_2^4)$ and making the vertex figure an antiprismatic square. The projection of this Wythoffian onto the reflection plane of $t_0$ appears as $\{\infty,4\}_4$. 

In the apeirohedron $P^1$, the faces of type $F_2^{\{0,1\}}$ are linear apeirogons. Each apeirogon corresponds to a zigzag of $\{\infty,4\}_{4}\#\{\infty\}$ and tessellates the line connecting the midpoints of the edges of the zigzag. The faces of type $F_2^{\{1,2\}}$ are antiprismatic squares. There are two squares and two lines alternating about each vertex, giving the vertex symbol $(4_s.\infty.4_s.\infty)$. The vertex figure is a crossed quadrilateral. This is a uniform apeirohedron. The projection of this Wythoffian onto the reflection plane of $t_0$ appears as the Wythoffian $P^{1}$ of $\{\infty,4\}_4$. 

For the apeirohedron $P^{01}$ the faces of type $F_2^{\{0,1\}}$ are  apeirogons.  Each one corresponds to a face of $P^0$ such that the orthogonal projection of $F_2^{\{0,1\}}$ onto the plane of the base face of $\{\infty,4\}_4\#\{\infty\}$ appears as a truncated zigzag. When the initial vertex lies in the base face of $\{\infty,4\}_4\#\{\infty\}$,  the  faces of type $F_2^{\{0,1\}}$ is a truncation of that base face.   The faces of type $F_2^{\{1,2\}}$ are antiprismatic squares. There are two apeirogons and one square at each vertex, with vertex symbol $(4_s.t\infty_2.t\infty_2)$  (where here we use $t\infty_2$ to indicate the apeirogon's relationship with truncated zigzags). The resulting vertex figure is an isosceles triangle. The truncated zigzags are not regular polygons so this Wythoffian is not a uniform apeirohedron. The projection of this Wythoffian onto the reflection plane of $t_0$ appears as the Wythoffian $P^{01}$ of $\{\infty,4\}_4$. 

In the apeirohedron $P^{02}$ the faces of type $F_2^{\{0,1\}}$ are regular zigzags, the faces of type $F_2^{\{1,2\}}$ are convex squares lying parallel to the plane of $t_0$, and the faces of type $F_2^{\{0,2\}}$ are crossed quadrilaterals. At each vertex, in cyclic order, there is a crossed quadrilateral, a square, a crossed quadrilateral, and an apeirogon, giving $(4_{\,\bowtie}.4_c.4_{\,\bowtie}.\infty_2)$  as vertex symbol. The resulting vertex figure is a skew quadrilateral. The crossed quadrilateral faces are not regular so this is not a uniform apeirohedron. The projection of this Wythoffian onto the reflection plane of $t_0$ appears as the Wythoffian $P^{02}$ of $\{\infty,4\}_4$. 

Lastly, we will look at  the apeirohedron $P^{012}$.  Similar to $P^{01}$ the face $F_2^{\{0,1\}}$ is an apeirogon which orthogonally projects as a truncated zigzag onto the plane of the base face of $\{\infty,4\}_4\#\{\infty\}$.  For some initial vertices this apeirogon is planar and for other choices it is not. The faces of type $F_2^{\{1,2\}}$ are skew octagons (truncated antiprismatic squares) and the faces of type $F_2^{\{0,2\}}$ are planar crossed quadrilaterals. There is one face of each type meeting at each vertex yielding a triangular vertex figure with vertex symbol  $(4_{\,\bowtie}.8_s.t\infty_2)$.  None of the faces are regular so $P^{012}$ is not a uniform apeirohedron. The projection of this Wythoffian onto the reflection plane of $t_0$ appears as the Wythoffian $P^{012}$ of $\{\infty,4\}_4$.

\begin{figure}[h]
\begin{center}
\begin{tabular}{ccccc}
\raisebox{-\totalheight}{ \includegraphics*[clip, scale = .29]{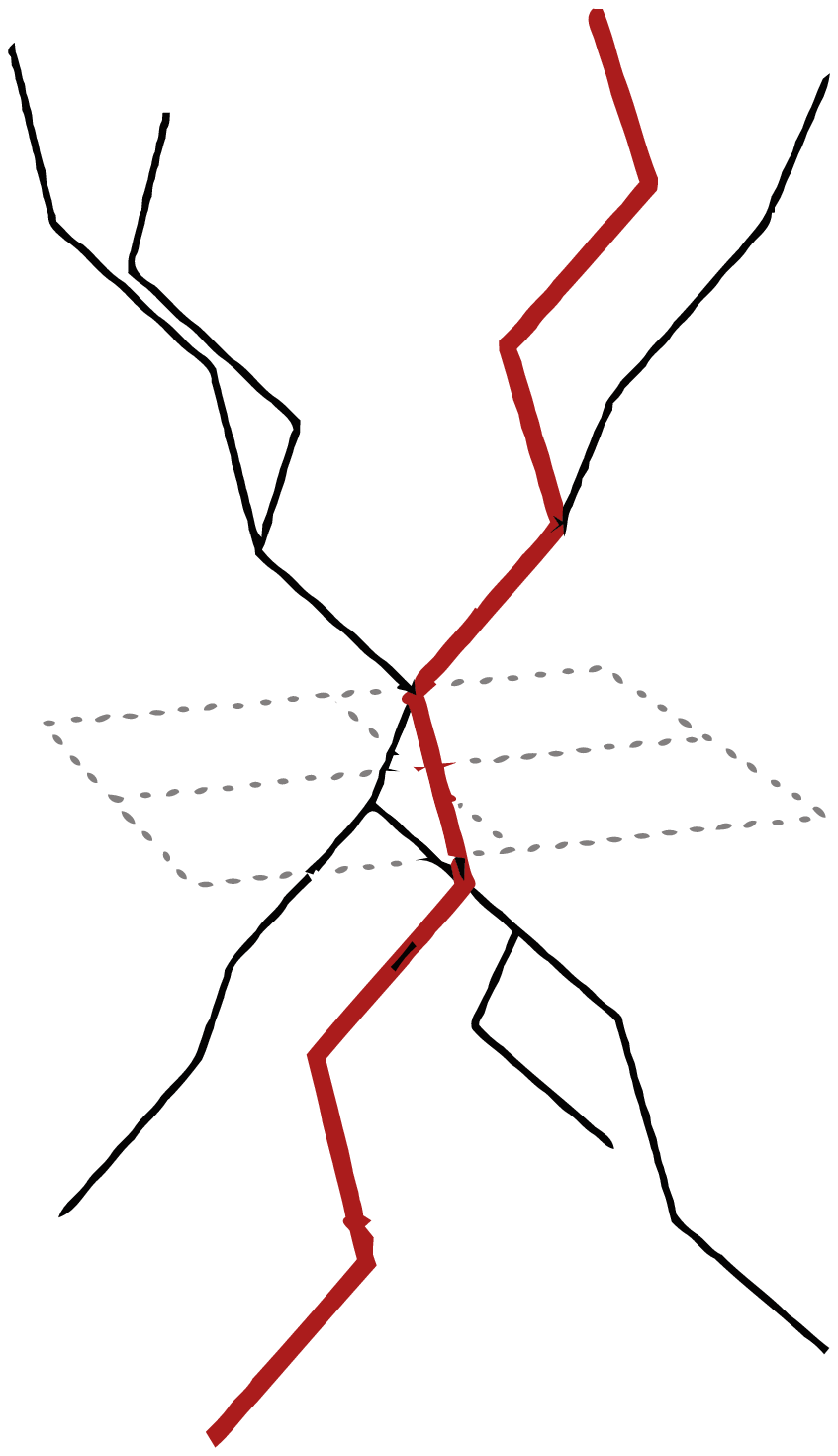}}& \raisebox{-\totalheight}{ \includegraphics*[clip, scale = .29]{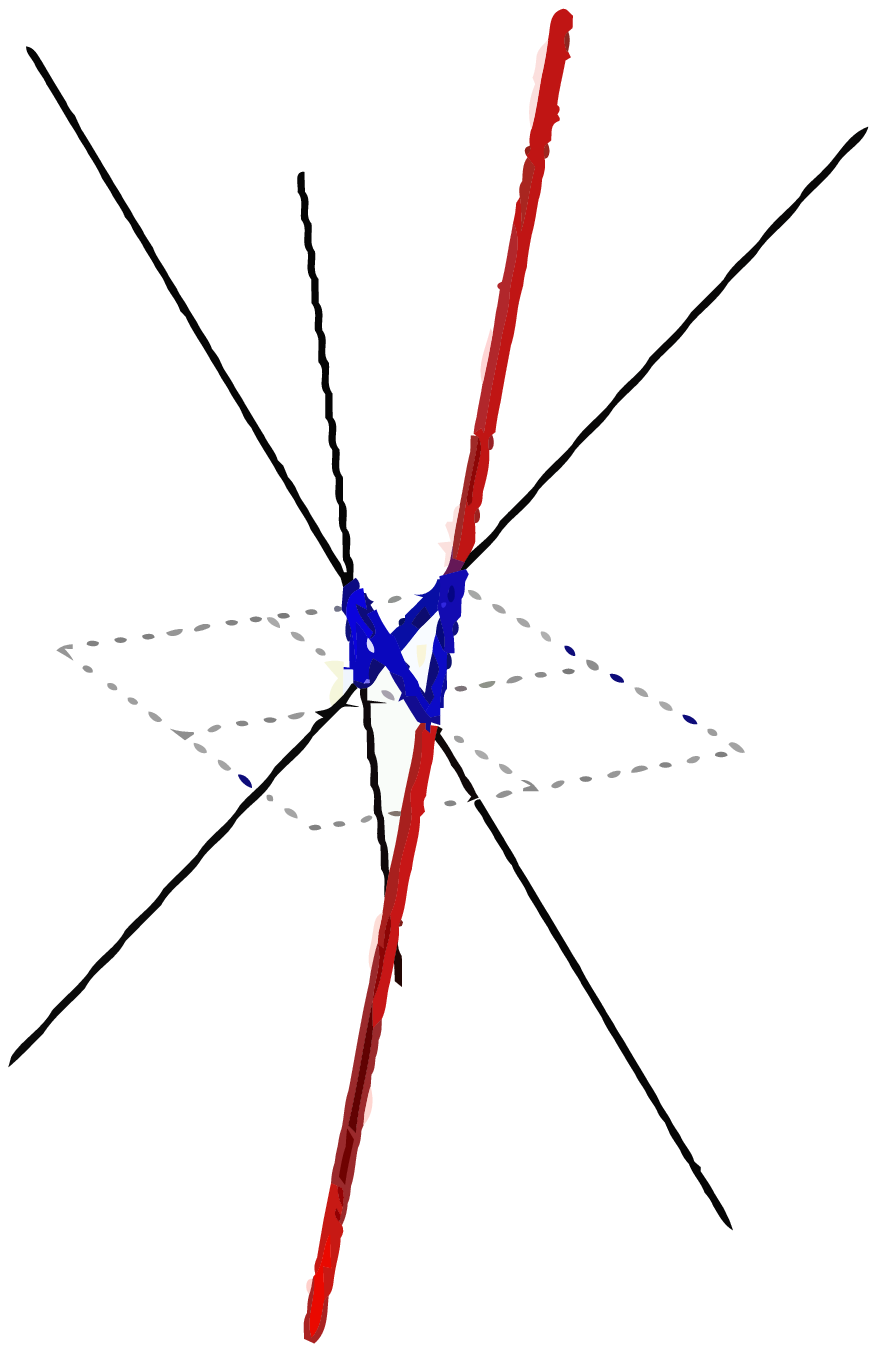}}&\raisebox{-\totalheight}{ \includegraphics*[clip, scale = .29]{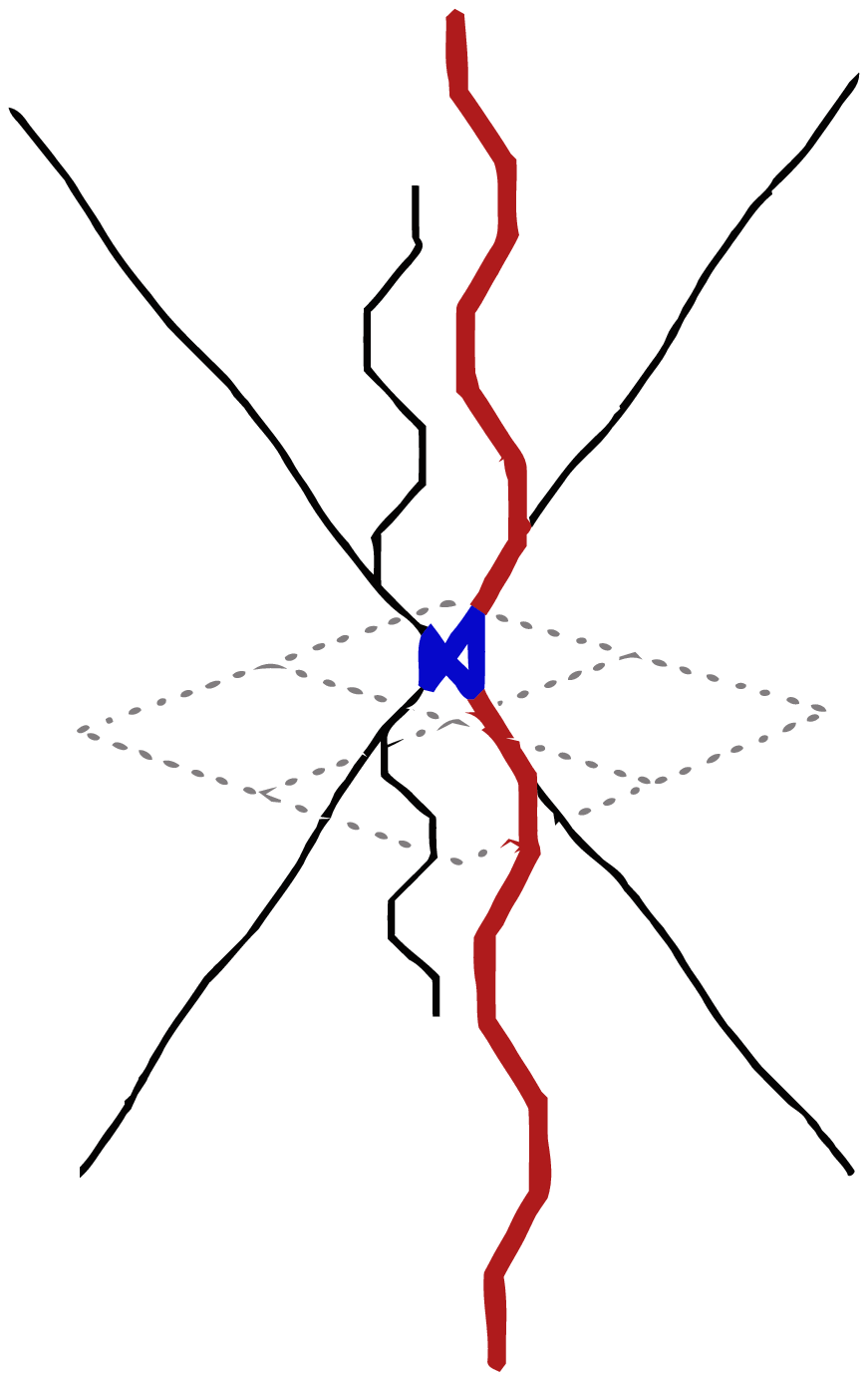}}& \raisebox{-\totalheight}{ \includegraphics*[clip, scale = .29]{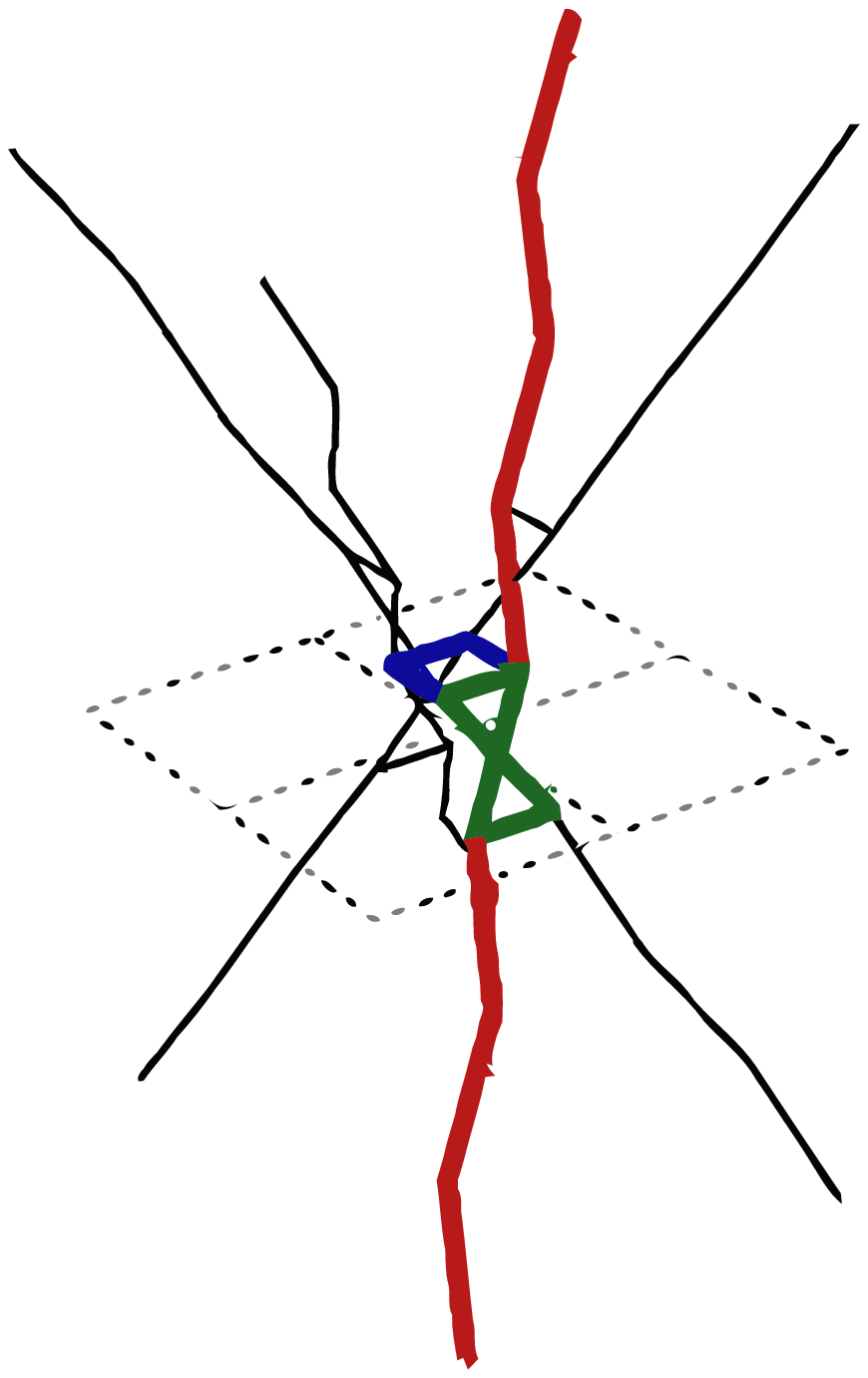}}&\raisebox{-\totalheight}{ \includegraphics*[clip, scale = .29]{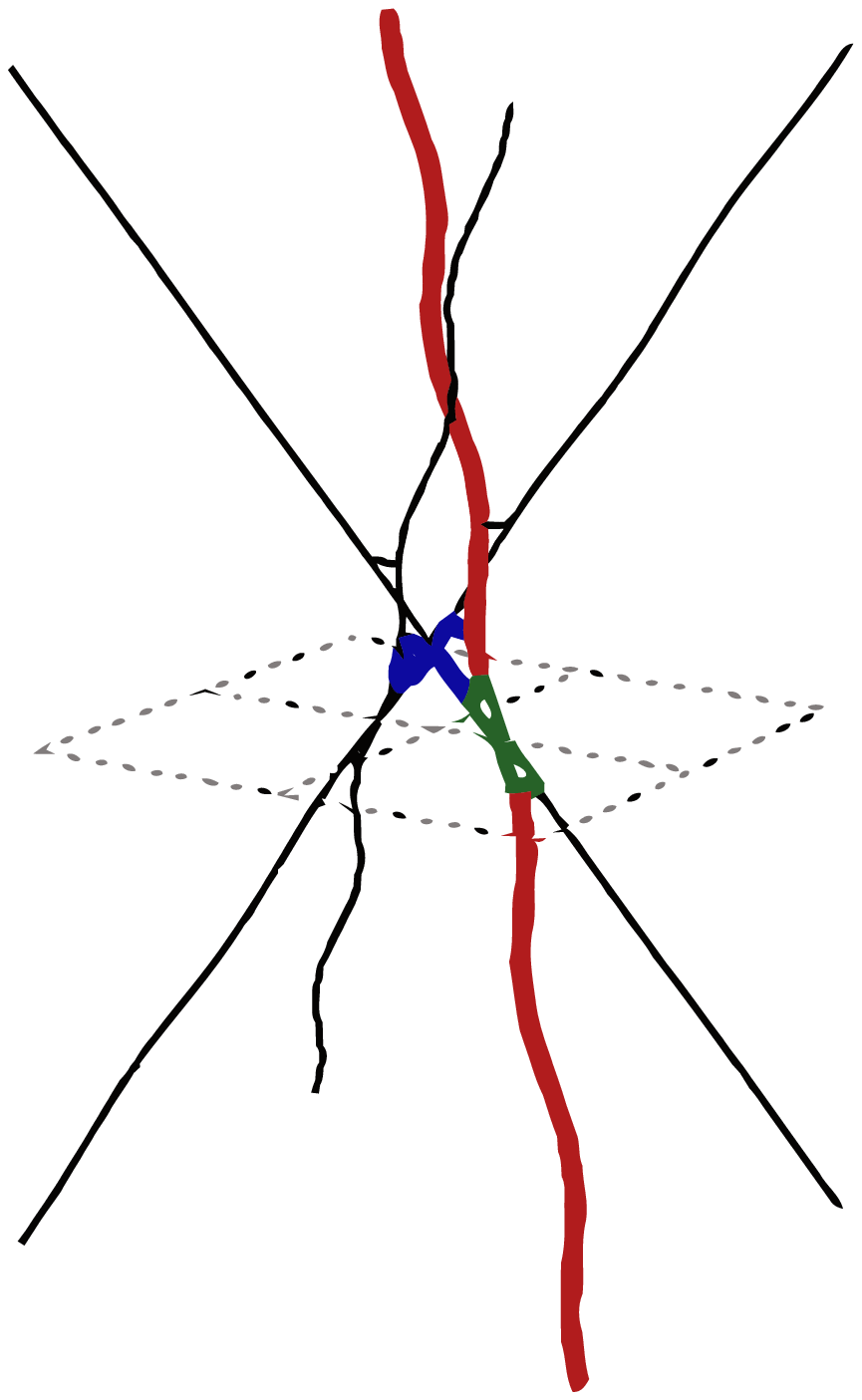}}\\
&&&&\\
$P^0$&$P^1$&$P^{01}$&$P^{02}$&$P^{012}$ 
\end{tabular}\caption{The Wythoffians derived from $\{\infty,4\}_4\#\{\infty\}$.}\label{inf,4_4 inf}\end{center}\end{figure}

\subsection{Petrie-Coxeter polyhedra}

In this final section we examine the Wythoffians of the Petrie-Coxeter polyhedra, the three most prominent examples of pure regular apeirohedra. These have convex faces and skew vertex figures. The symmetry group of each of them can be derived from the symmetry group of the cubical honeycomb, $\{4,3,4\}$. We take this symmetry group in the form $G(\{4,3,4\})=\langle t_0,t_1,t_2,t_3\rangle$, where $t_0,t_1,t_2,t_3$ are the distinguished generators (as in~\cite[p.~231]{SchMc}). The fundamental region of $G(\{4,3,4\})$ in $\mathbb{E}^3$ is a simplex with vertices at the centers of the faces in a flag of $\{4,3,4\}$, and each generator $t_j$ is the reflection in the plane bounding the simplex and opposite to the vertex corresponding to the $j$-face in the flag.

We begin with the Petrie-Coxeter polyhedron $\{4,6|4\}$. From~\cite[p.~231]{SchMc} we know that $G(\{4,6|4\})=\langle r_0,r_1,r_2\rangle$, where $r_{0}:=t_0$, $r_{1}:=t_{1}t_{3}$ and $r_{2}:=t_2$. Note that $r_0$ and $r_2$ are plane reflections, and that $r_1$ is a halfturn. For the Wythoffians the initial vertices have all been chosen so that they are points of the fundamental region within the convex hull of the base face of $\{4,6|4\}$. This choice leads to the resulting figures being more geometrically similar to $\{4,6|4\}$. Other points in the fundamental region which belong to the same Wythoffian class generate combinatorially isomorphic figures, but previously planar faces may become skew, or vice versa. For instance, in the cases where the initial vertex is transient under $r_1$, choosing a point outside of the convex hull of a face of $\{4,6|4\}$ destroys the planarity of the faces of type $F_2^{\{0,1\}}$ but does preserve the isomorphism type of the apeirohedron. For pictures of the Wythoffians see Figure \ref{4,6l4}.

The first apeirohedron, $P^0$, is $\{4,6|4\}$ itself. It has convex square faces, six of which meet at each vertex, and so the vertex symbol is $(4^6)$. The vertex figure is a regular, antiprismatic hexagon. 

For $P^1$, the apeirohedral Wythoffian has convex square faces of type $F_2^{\{0,1\}}$ while the faces of type $F_2^{\{1,2\}}$ are regular, antiprismatic hexagons. Cyclically, at each vertex, there is a hexagon, a square, a hexagon, and a square. Thus the vertex symbol is $(4_c.6_s.4_c.6_s)$ and the vertex figure is a rectangle. The faces are all regular polygons so this is a uniform apeirohedron. 

The Wythoffian $P^2$ is the dual of $\{4,6|4\}$, the regular apeirohedron $\{6,4|4\}$. Each face is a regular, convex hexagon of type $F_2^{\{1,2\}}$. Four come together at each vertex yielding skew quadrilateral as the vertex figure with a vertex symbol $(6_c^4)$. 

For the next apeirohedral Wythoffian, $P^{01}$, the faces of type $F_2^{\{0,1\}}$ are convex octagons and the faces of type $F_2^{\{1,2\}}$ are regular, antiprismatic hexagons. The vertex symbol is $(6_s.8_c^2)$ and an isosceles triangle is the vertex figure. For a specific choice of initial vertex the octagons are regular and the Wythoffian is uniform. Note that for an initial vertex chosen outside of the convex hull of the base face of $\{4,6|4\}$, the octagon would become a truncated antiprismatic quadrilateral which can not be made regular and so in this case the Wythoffian is not uniform. 

In the apeirohedral Wythoffian $P^{02}$ the faces of type $F_2^{\{0,1\}}$ are convex squares; the faces of type $F_2^{\{1,2\}}$ are regular, convex hexagons; and the faces of type $F_2^{\{0,2\}}$ are convex rectangles. Cyclically, at each vertex, there is a square, a rectangle, a hexagon, and a second rectangle, giving a vertex symbol $(4_c.4_c.6_c.4_c)$. The vertex figure is a skew quadrilateral. For certain initial vertex choices the rectangles can be made into squares making the Wythoffian uniform. 

In the apeirohedron $P^{12}$ the faces of type $F_2^{\{0,1\}}$ are convex squares and the faces of type $F_2^{\{1,2\}}$ are skew dodecagons (truncated antiprismatic hexagons). The vertex symbol is $(4.12_s^2)$ which corresponds to an isosceles triangle as the vertex figure. The skew dodecagons cannot be made regular by any vertex choice and thus this Wythoffian is not a uniform apeirohedron for any initial vertex choice. 

Finally, consider $P^{012}$. In this apeirohedron, the faces of type $F_2^{\{0,1\}}$ are convex octagons, the faces of type $F_2^{\{1,2\}}$ are skew dodecagons (truncated antiprismatic hexagons), and the faces of type $F_2^{\{0,2\}}$ are convex rectangles. As with $P^{12}$ the skew dodecagons are never regular so the apeirohedron is not uniform. There is one face of each type at each vertex, yielding $(4_c.8_c.12_s)$ as a vertex symbol and a triangular vertex figure. 

\begin{figure}[h]
\begin{center}
\begin{tabular}{cccc}
\raisebox{-\totalheight}{ \includegraphics*[clip, scale = .25]{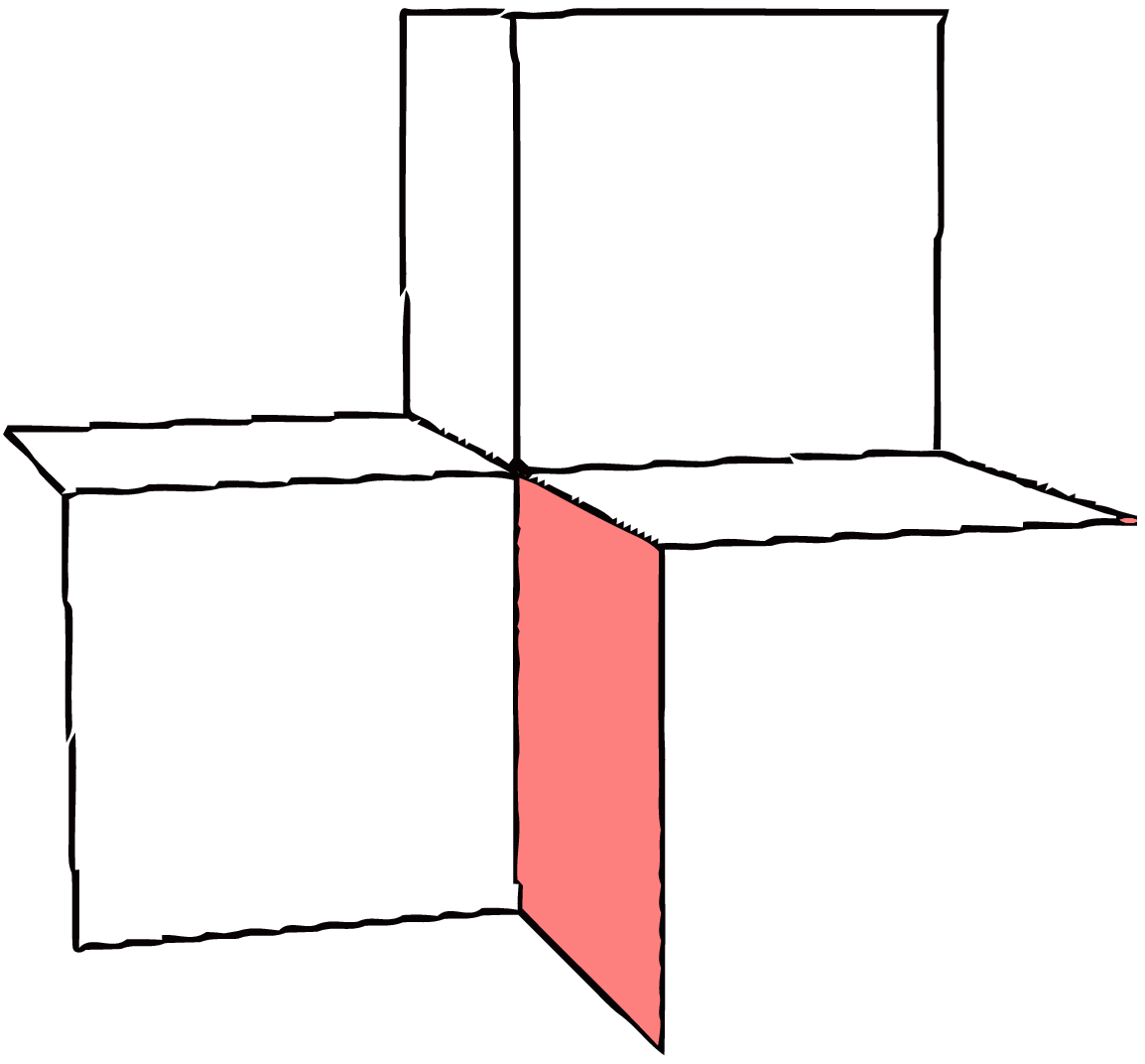}}& \raisebox{-\totalheight}{ \includegraphics*[clip, scale = .25]{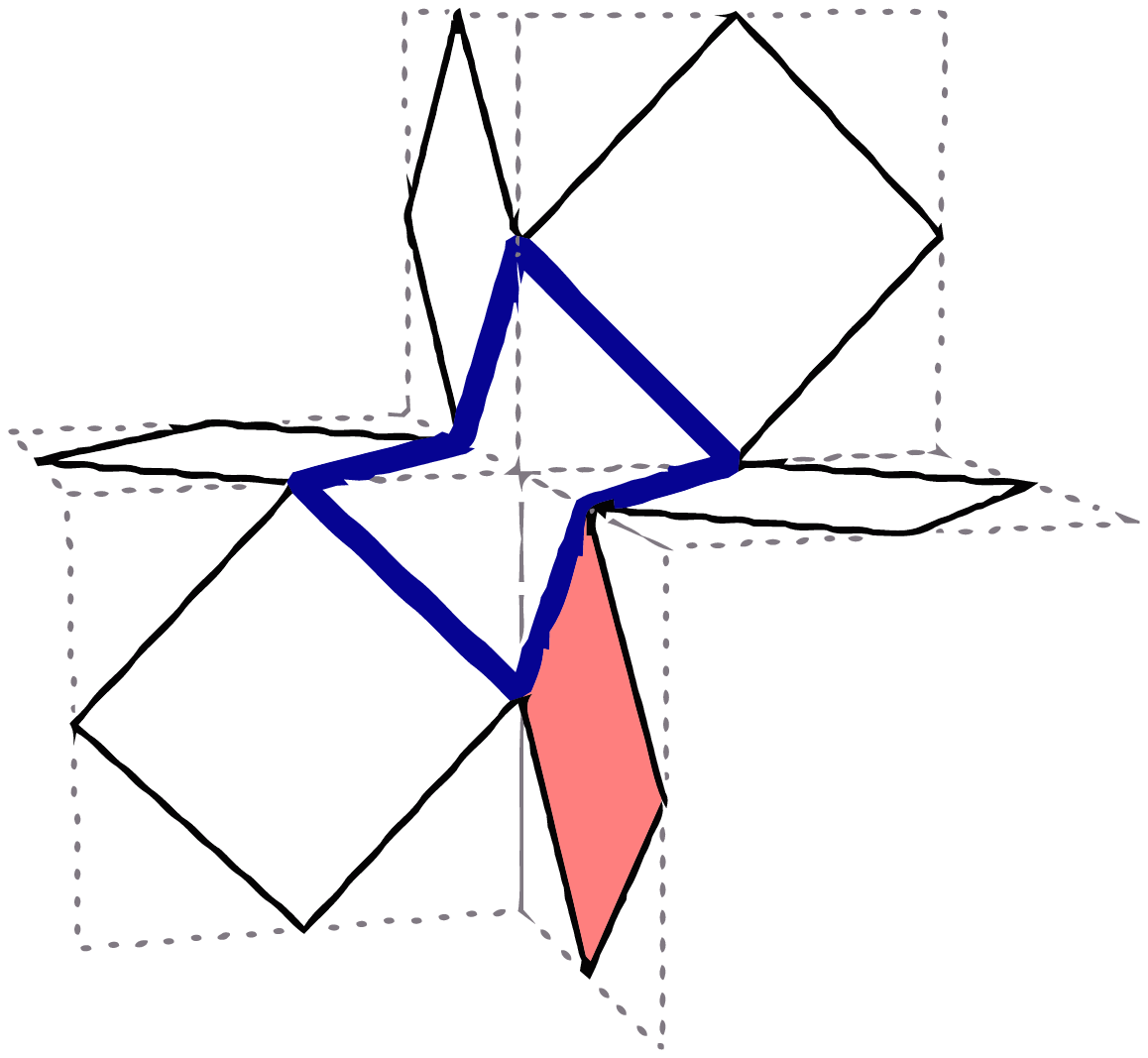}}& \raisebox{-\totalheight}{ \includegraphics*[clip, scale = .25]{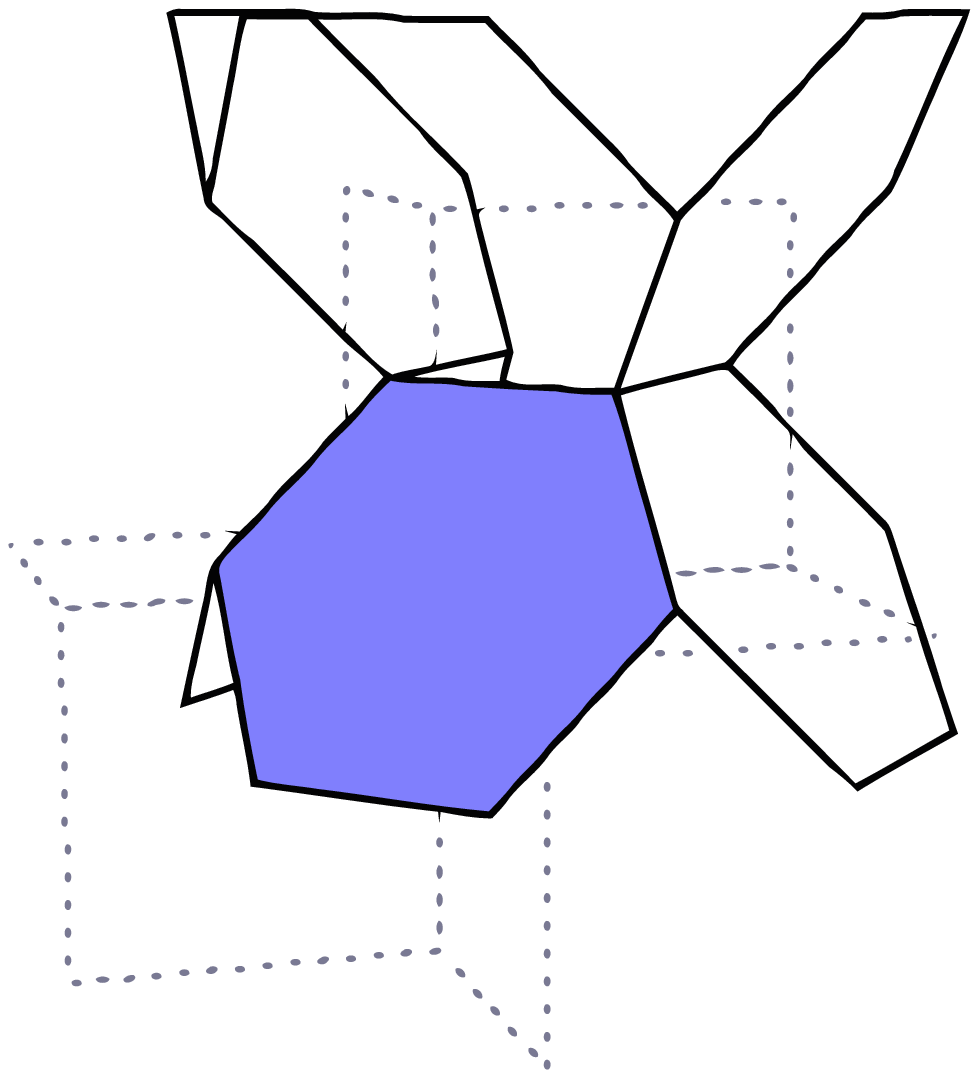}}&\raisebox{-\totalheight}{ \includegraphics*[clip, scale = .25]{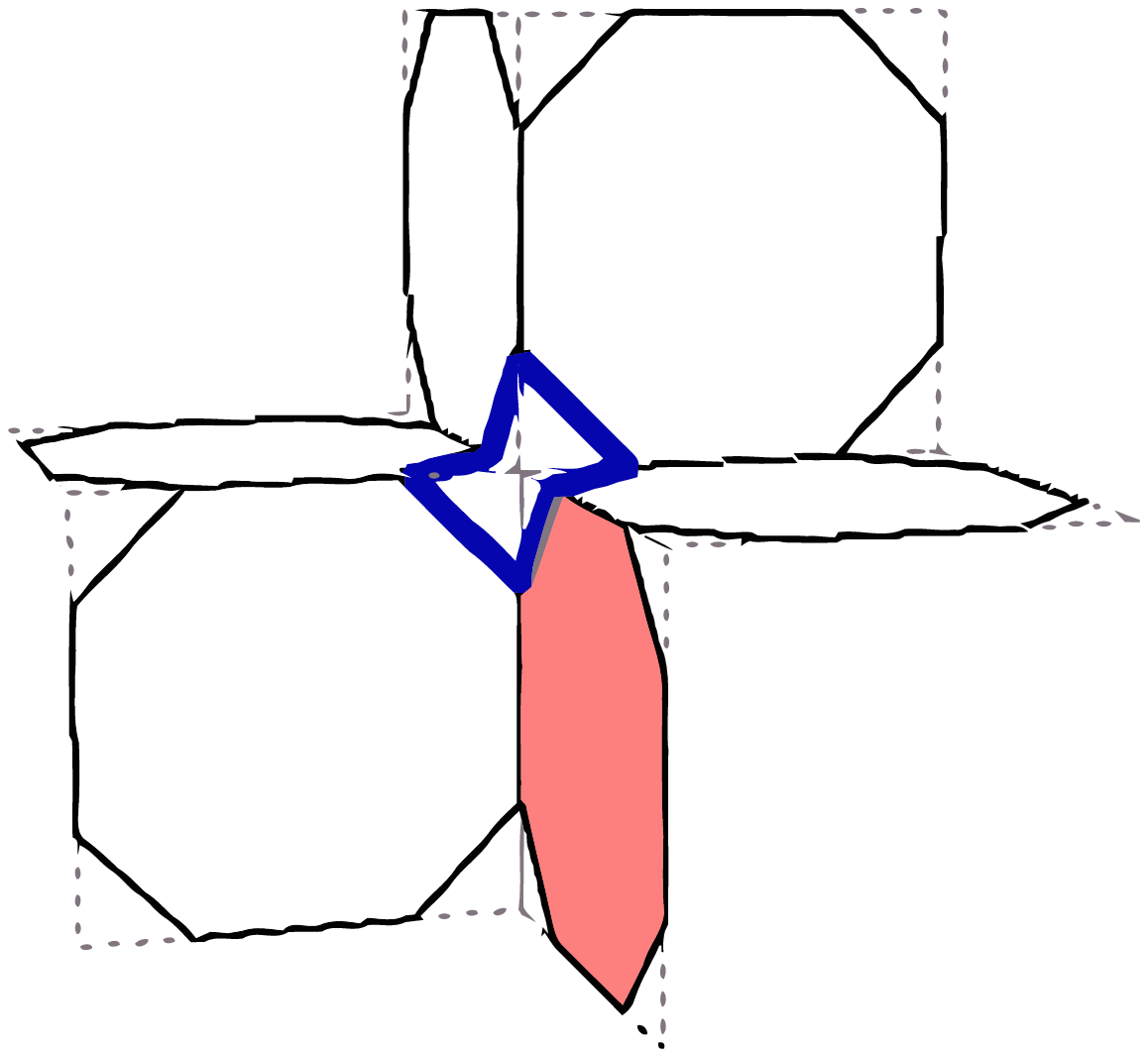}} \\
$P^{0}$ & $P^{1}$ & $P^{2}$&$P^{01}$\\
&&&\\
\raisebox{-\totalheight}{ \includegraphics*[clip, scale = .25]{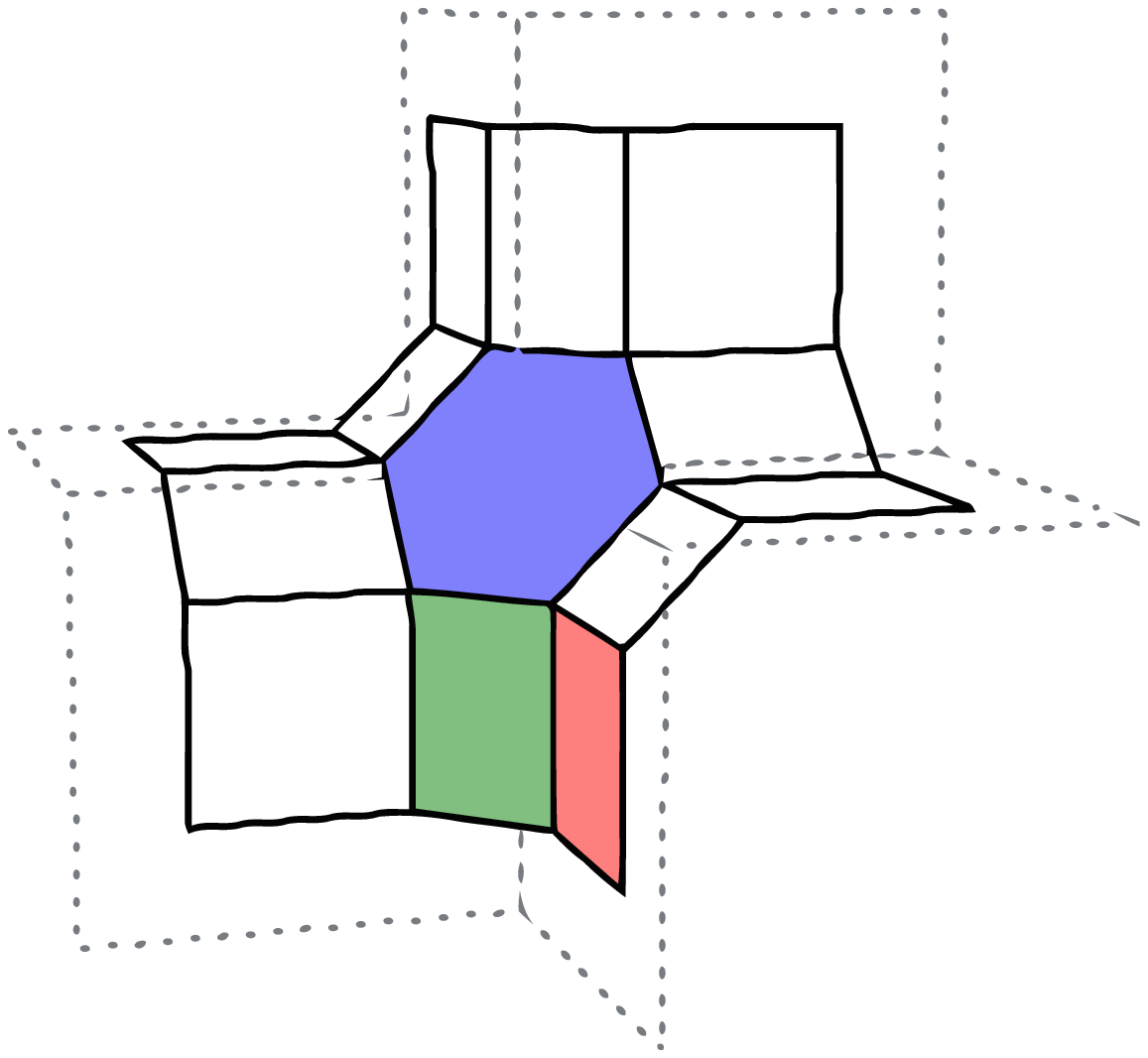}}& \raisebox{-\totalheight}{ \includegraphics*[clip, scale = .25]{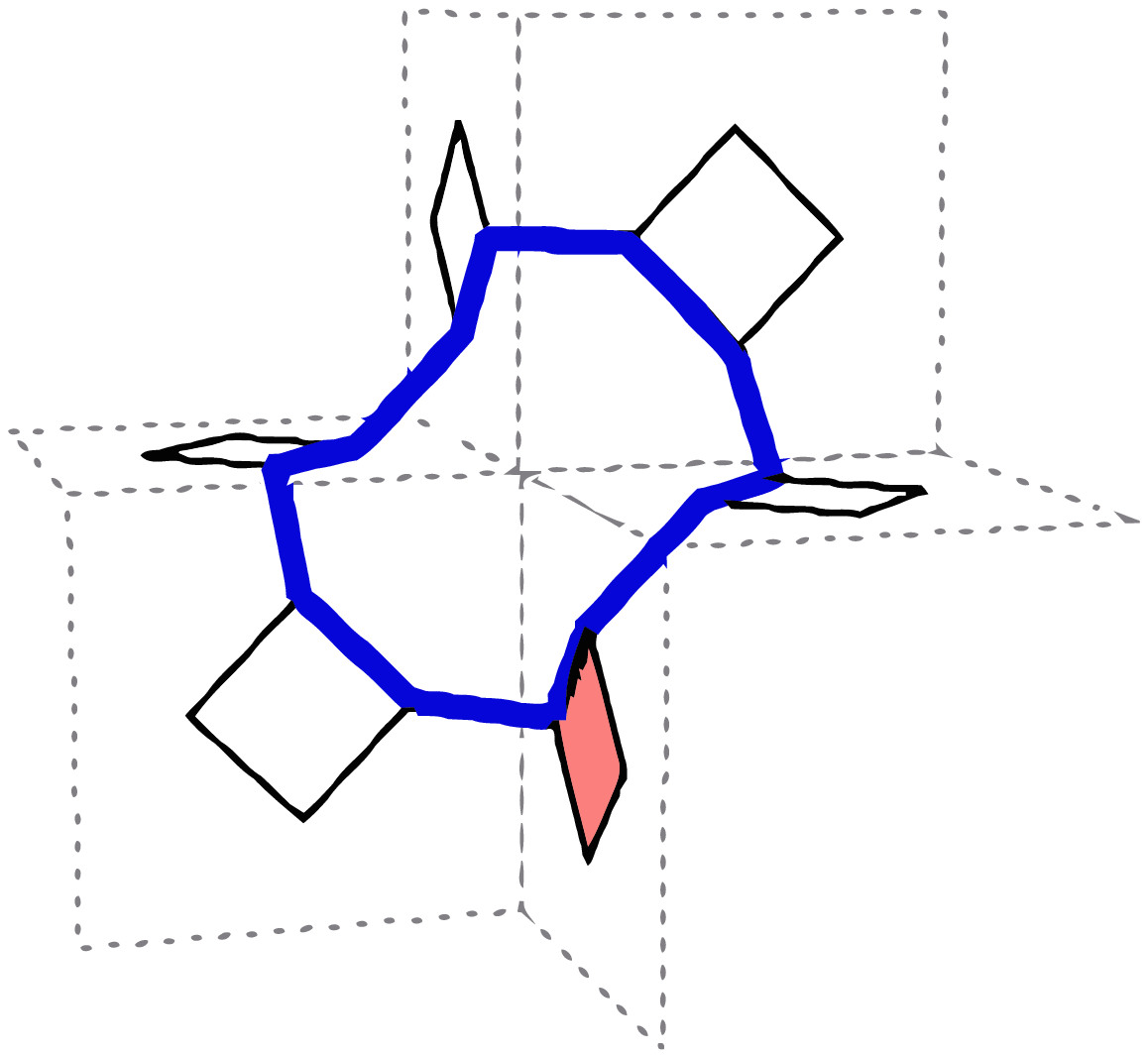}}& \raisebox{-\totalheight}{ \includegraphics*[clip, scale = .25]{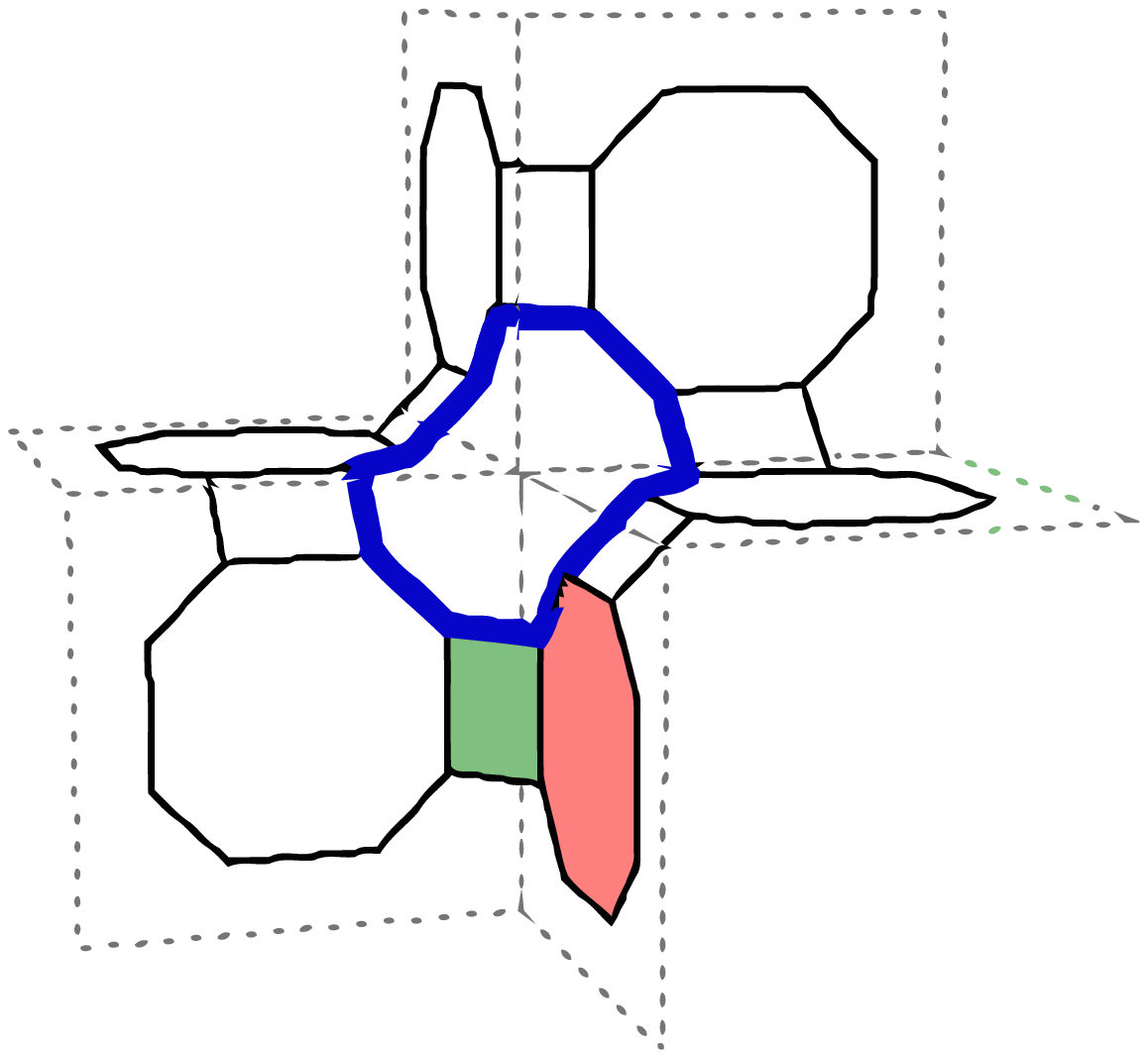}}&\\
$P^{02}$& $P^{12}$ & $P^{012}$&
\end{tabular}\caption{The Wythoffians derived from $\{4,6|4\}$.}\label{4,6l4}\end{center}\end{figure}

Next we investigate the Wythoffians of the Petrie-Coxeter polyhedron $\{6,4|4\}$, the dual of $\{4,6|4\}$. As such, its symmetry group is $G(\{6,4|4\})=\langle r_0,r_1,r_2\rangle$, where $r_{0}:=t_2$, $r_{1}:=t_{1}t_{3}$ and $r_{2}:=t_0$; these are generators of $G(\{4,6|4\})$ in reverse order. Note that the duality of $\{6,4|4\}$ and $\{4,6|4\}$ is geometric: we can produce one polyhedron from the other by reversing the order of the generators of its symmetry group and then applying Wythoff's construction with the new generators. As with $\{4,6|4\}$ we will only consider initial vertices which are contained within the convex hull of the base face and the fundamental region of $\{6,4|4\}$. As before, choosing the vertices in this way makes the faces of the Wythoffians more geometrically similar to the faces of $\{6,4|4\}$. Choosing an initial vertex within the base face versus an initial vertex from outside of the base face (but still within the fundamental region) will only affect the planarity of the faces but not the combinatorial properties. For pictures of the Wythoffians see Figure \ref{6,4l4}.

Due to the geometric duality between $\{6,4|4\}$ and $\{4,6|4\}$ we can interchange 0 and 2 in the superscripts of the Wythoffians of $\{6,4|4\}$ and get the Wythoffians of $\{4,6|4\}$, and vice versa. Note, however, that an initial vertex chosen in the base face of one of $\{6,4|4\}$ or $\{4,6|4\}$ will generally not also lie in the base face of the other. This explains why some of the Wythoffians in Figures~\ref{4,6l4} and \ref{6,4l4} that correspond to each other under the interchange of the subscripts $0$ and $2$ look quite different (although they are isomorphic). For example, $P^{01}$ of Figure~\ref{4,6l4} has convex octagons and skew hexagons as faces, while the corresponding polyhedron $P^{12}$ of Figure~\ref{6,4l4} has skew octagons and convex hexagons as faces. The geometry of the Wythoffians of $\{6,4|4\}$ with initial vertices in the base face of $\{6,4|4\}$ is as follows.

The initial Wythoffian, $P^0$, is the regular apeirohedron $\{6,4|4\}$ itself whose faces are convex, regular hexagons. Four such hexagons meet at each vertex yielding a regular, skew quadrilateral as the vertex figure with vertex symbol $(6_c^4)$. 

In the apeirohedron $P^1$ the faces of type $F_2^{\{0,1\}}$ are convex hexagons and the faces of type $F_2^{\{1,2\}}$ are regular, skew quadrilaterals. The vertex symbol is $(4_s.6_c.4_s.6_c)$ so the vertex figure is a rectangle. This is a uniform apeirohedron. 

The Wythoffian $P^2$ is $\{4,6|4\}$, the dual of $\{6,4|4\}$. The faces are convex squares of type $F_2^{\{1,2\}}$ and there are six circling each vertex with vertex symbol $(4_c^6)$. The vertex figure is a regular, antiprismatic hexagon. 

In the apeirohedron $P^{01}$ the faces of type $F_2^{\{0,1\}}$ are convex dodecagons (truncated hexagons) and the faces of type $F_2^{\{1,2\}}$ are regular, skew quadrilaterals. The vertex symbol is $(4_s.12_c^2)$ yielding an isosceles triangle as a vertex figure. For a carefully chosen initial vertex the dodecagons are regular and this Wythoffian is uniform. 

For the apeirohedron $P^{02}$,  we get a figure which is congruent to $P^{02}$ of $\{4,6|4\}$.  The faces of type $F_2^{\{0,1\}}$ are convex, regular hexagons; the faces of type $F_2^{\{1,2\}}$ are convex squares; and the faces of type $F_2^{\{0,2\}}$ are convex rectangles. At each vertex there is a rectangle, a square, a rectangle, and a hexagon, in cyclic order, yielding a vertex symbol of $(4_c.4_c.6_c.4_c)$. The vertex figure is then a convex quadrilateral. For certain choices of initial vertex the faces are all regular and the apeirohedron is uniform.  

In the apeirohedron $P^{12}$ the faces of type $F_2^{\{0,1\}}$ are convex, regular hexagons and the faces of type $F_2^{\{1,2\}}$ are skew octagons (truncated skew quadrilaterals). The vertex symbol is $(6_c.8_s^2)$ resulting in an isosceles triangle as a vertex figure. For an initial vertex choice outside the convex hull of the base face of $\{6,4|4\}$ the skew octagons will sometimes become convex octagons (possibly regular) and the convex hexagons will sometimes become antiprismatic, regular hexagons.   In this case the Wythoffian would be uniform. 

Finally, examine $P^{012}$. In this apeirohedron the faces of type $F_2^{\{0,1\}}$ are convex dodecagons (truncated hexagons), the faces of type $F_2^{\{1,2\}}$ are skew octagons (truncated skew quadrilaterals), and the faces of type $F_2^{\{0,2\}}$ are convex rectangles. The vertex symbol is $(4_c.8_s.12_c)$ corresponding to a triangular vertex figure. 

\begin{figure}[h]
\begin{center}
\begin{tabular}{cccc}
\raisebox{-\totalheight}{ \includegraphics*[clip, scale = .25]{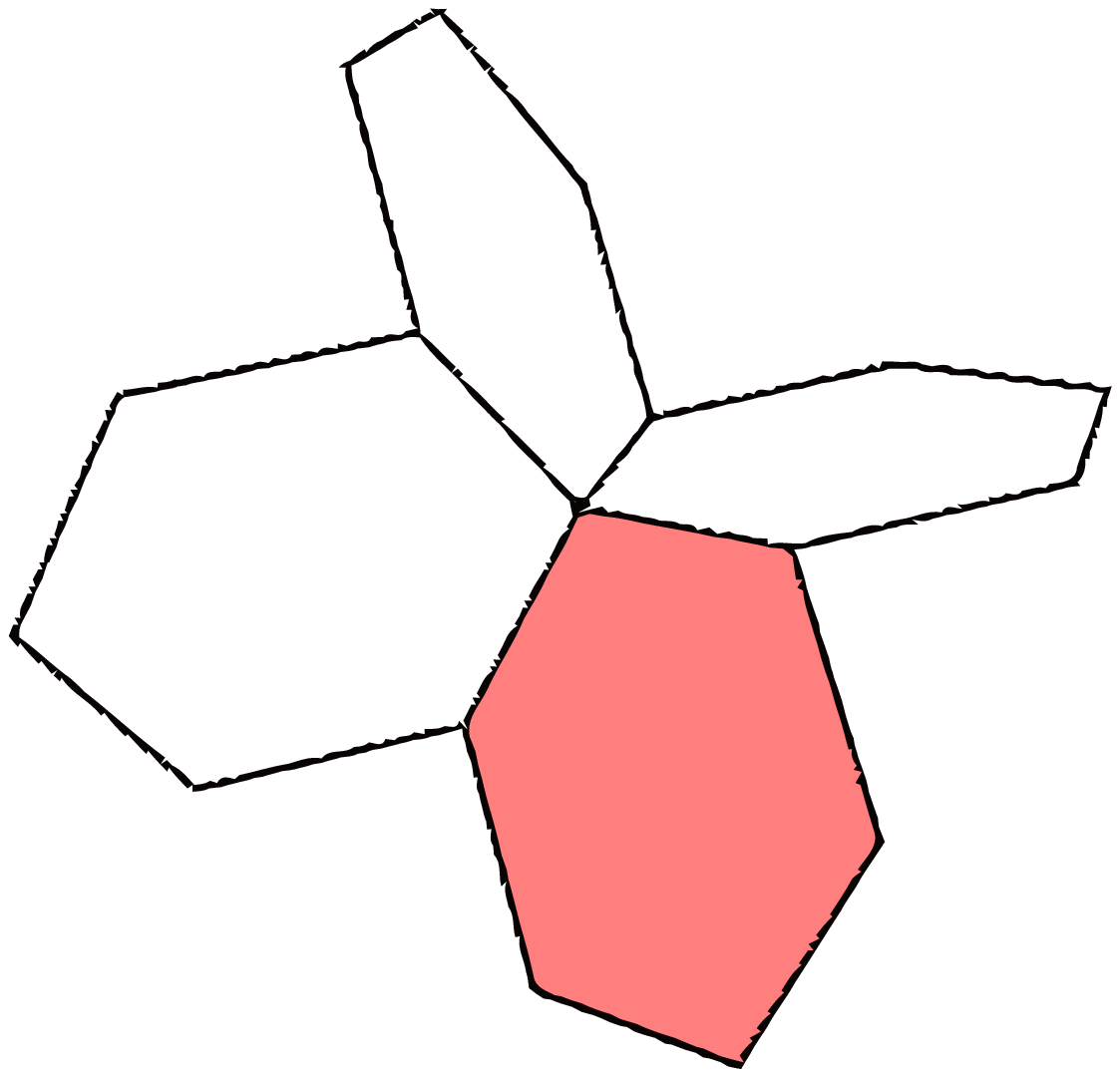}}& \raisebox{-\totalheight}{ \includegraphics*[clip, scale = .25]{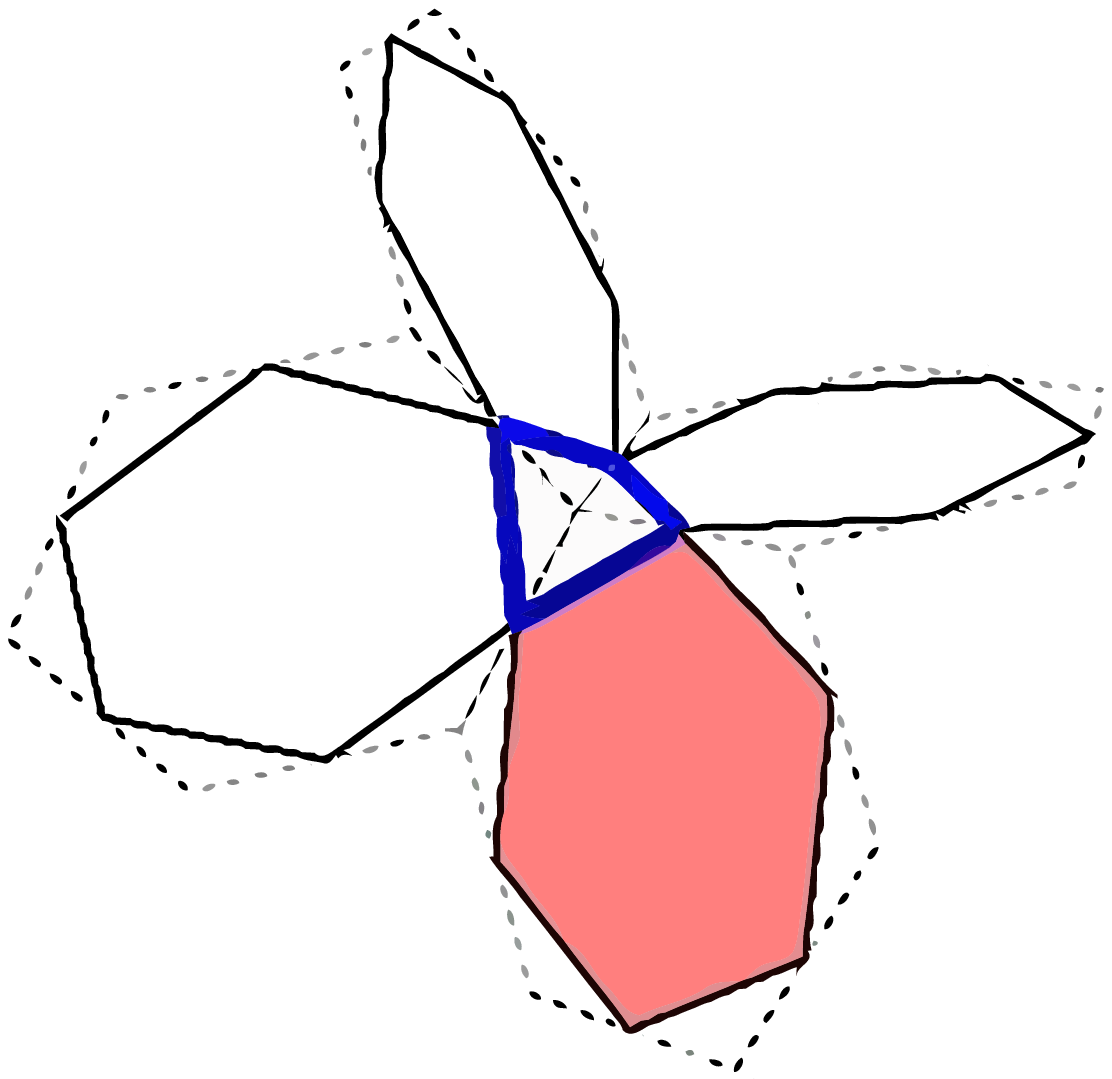}}& \raisebox{-\totalheight}{ \includegraphics*[clip, scale = .25]{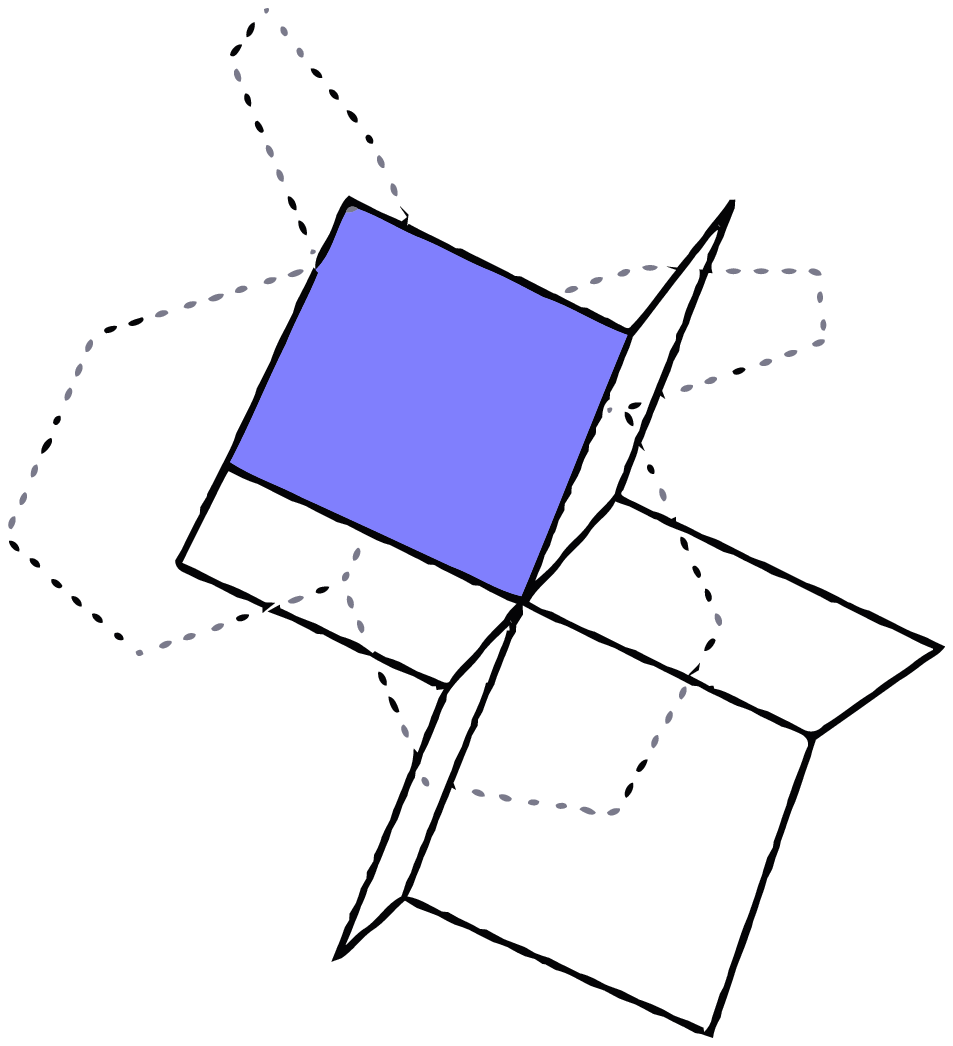}}&\raisebox{-\totalheight}{ \includegraphics*[clip, scale = .25]{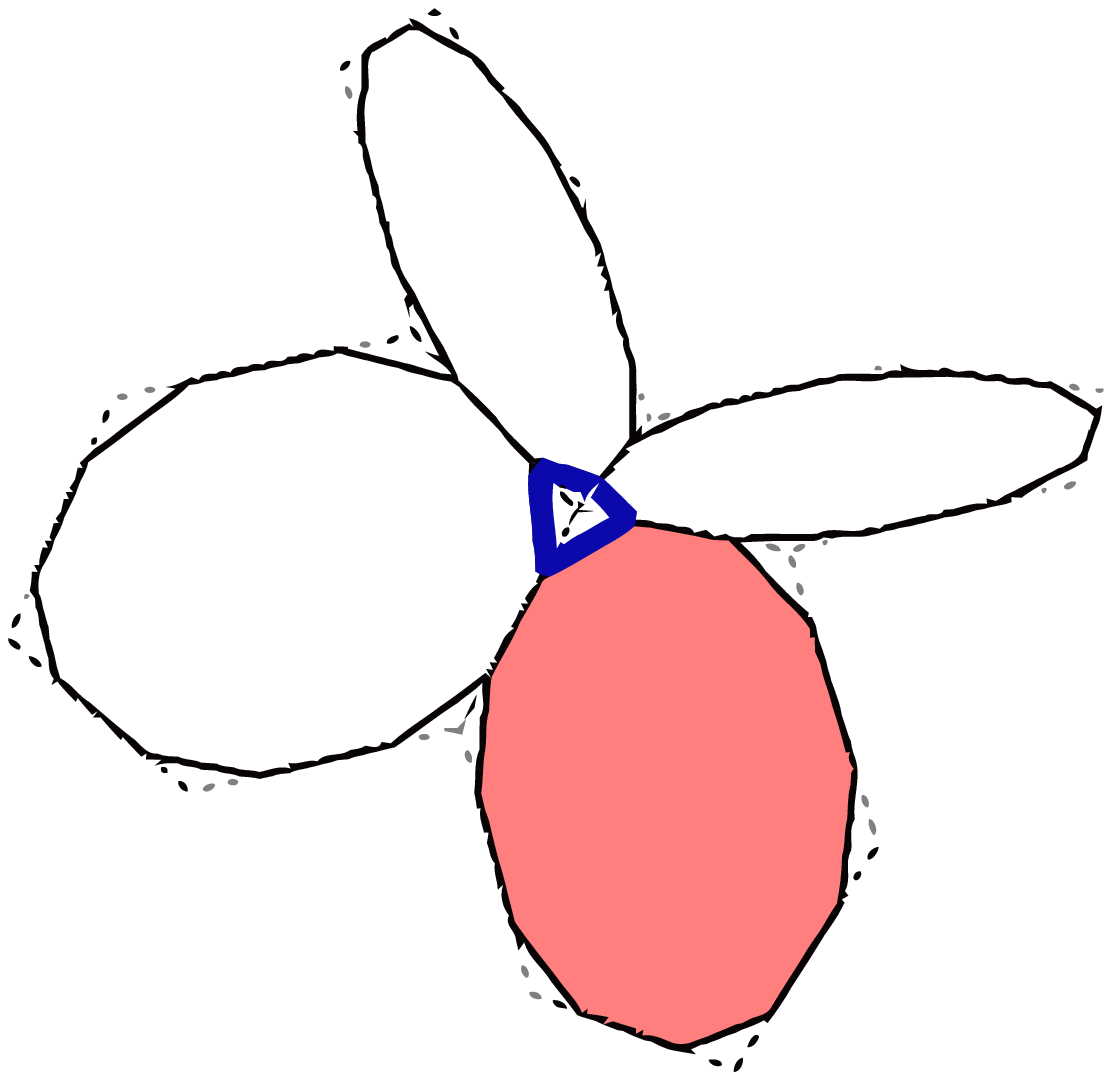}} \\
$P^{0}$ & $P^{1}$ & $P^{2}$&$P^{01}$\\
&&&\\
\raisebox{-\totalheight}{ \includegraphics*[clip, scale = .25]{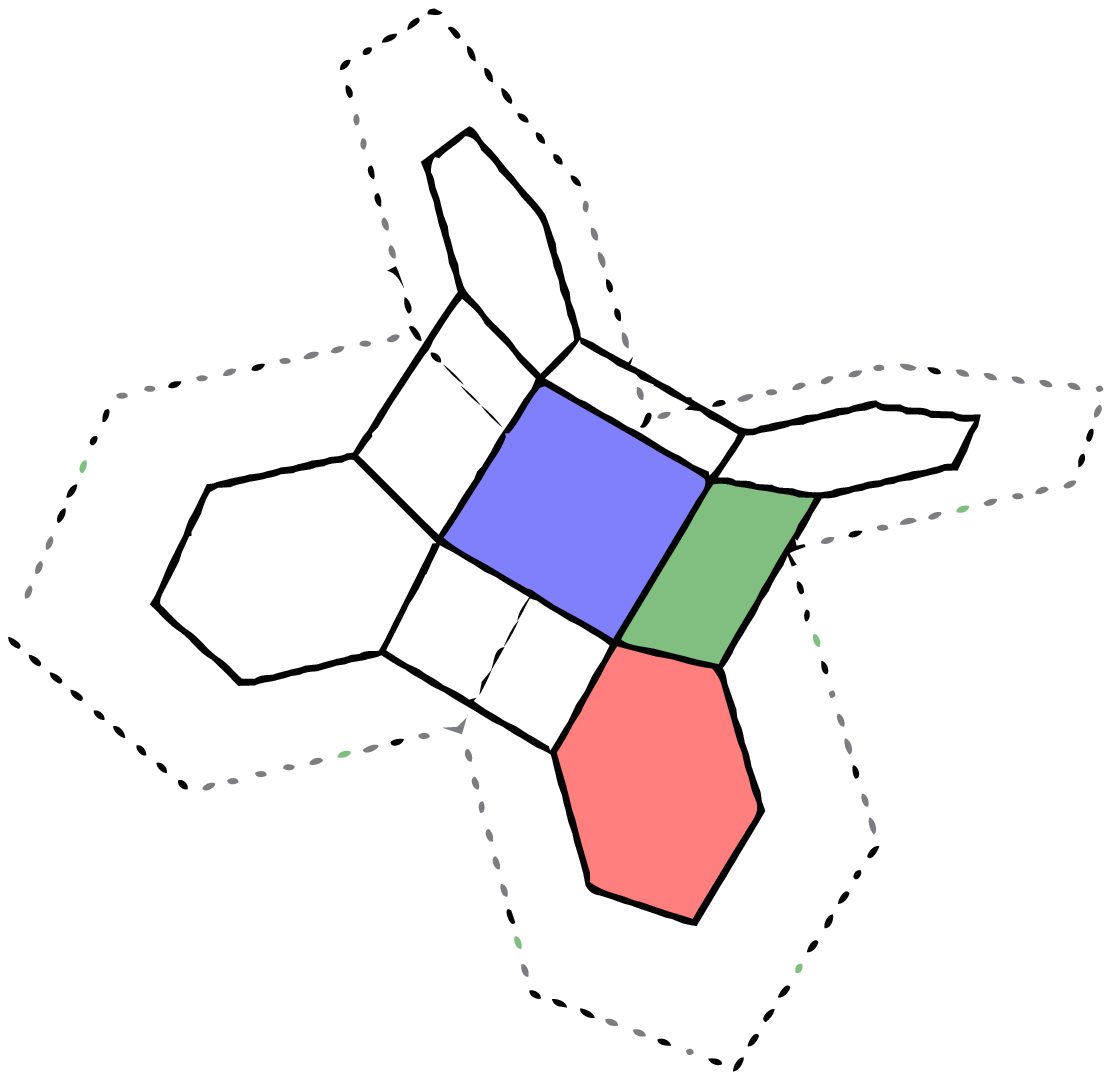}}& \raisebox{-\totalheight}{ \includegraphics*[clip, scale = .25]{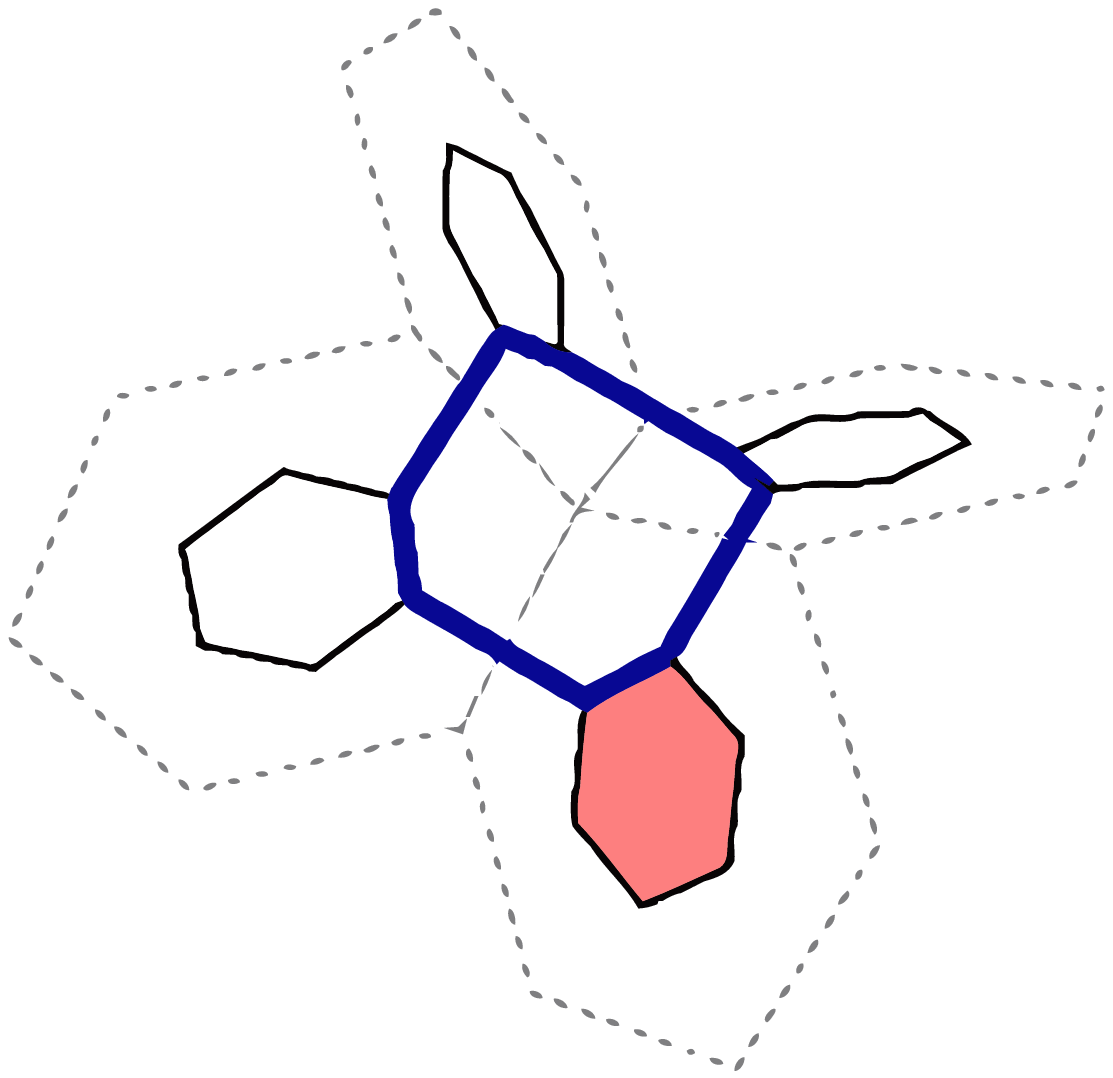}}& \raisebox{-\totalheight}{ \includegraphics*[clip, scale = .25]{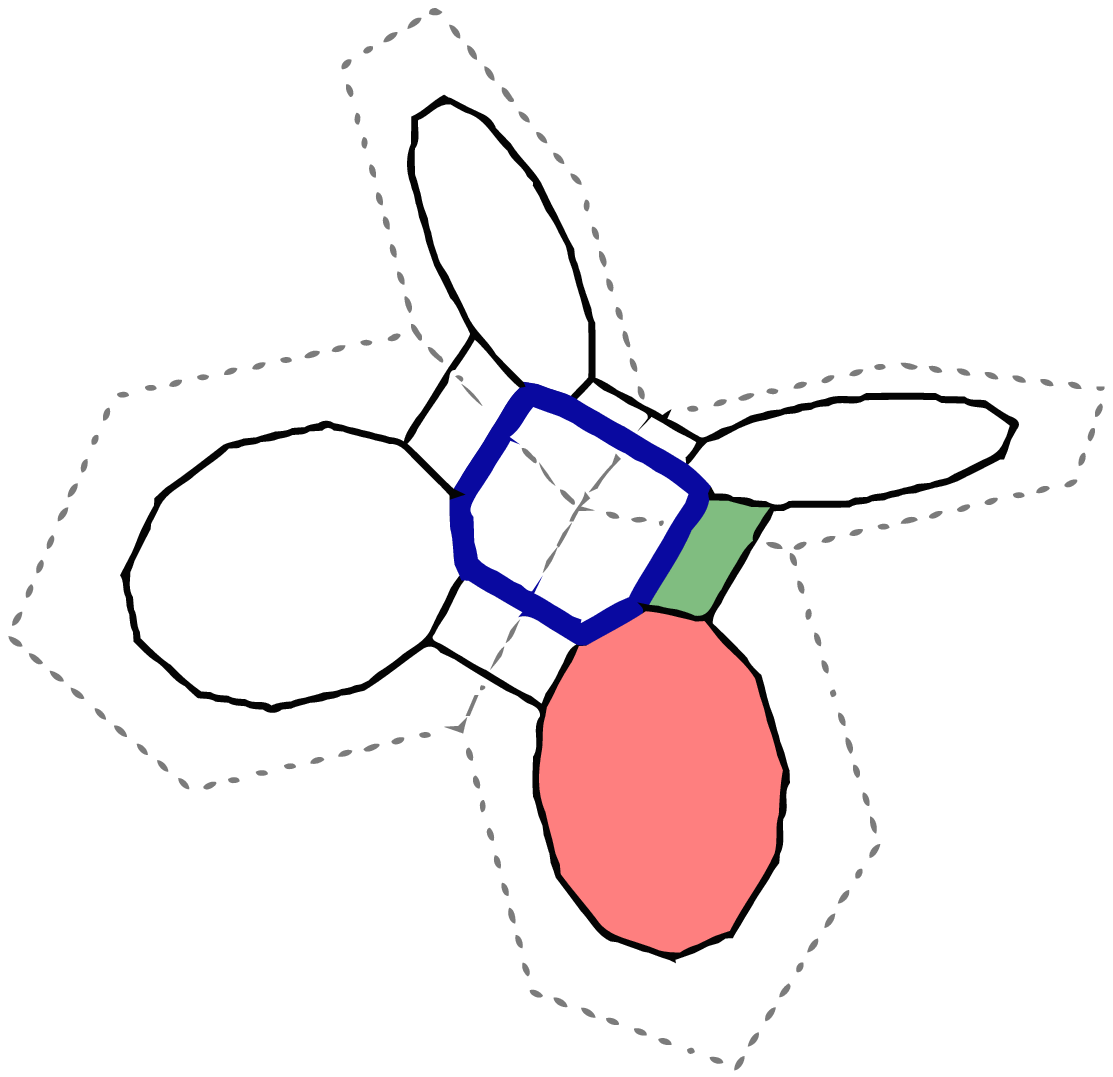}}&\\
$P^{02}$& $P^{12}$ & $P^{012}$&
\end{tabular}\caption{The Wythoffians derived from $\{6,4|4\}$.}\label{6,4l4}\end{center}\end{figure}

The final Petrie-Coxeter polyhedron is $\{6,6|3\}$ with symmetry group $G(\{6,6|3\})=\langle r_0,r_1,r_2\rangle$, where $r_{0}:=(t_0t_1)^2t_2(t_0t_1)^2$, $r_{1}:=t_1t_3$, and $r_{2}:=t_2$, and $t_0,\ldots,t_3$ are as before (see \cite[p. 224]{SchMc}). In particular, $r_0$ and $r_2$ are plane reflections and $r_1$ is a half-turn. Again the initial vertices are points of the base face of $\{6,6|3\}$ that lie in the fundamental region. As before we place this restriction on the initial vertex choices to make the geometry of the Wythoffian similar to the geometry of $\{6,6|3\}$. Note that $\{6,6|3\}$ is geometrically self-dual, and so the collections of Wythoffians $P^2$ and $P^{12}$ are just the same as those of $P^0$ and $P^{01}$, respectively. For pictures of the Wythoffians see Figure \ref{6,6l3}.

The first Wythoffian, $P^0$, is the regular apeirohedron $\{6,6|3\}$ itself. It has regular, convex hexagons for faces. Six such hexagons meet at each vertex yielding a regular, antiprismatic hexagon for the vertex figure. 

In $P^1$ the faces of type $F_2^{\{0,1\}}$ are regular, convex hexagons while the faces of type $F_2^{\{1,2\}}$ are regular, antiprismatic hexagons. Alternating about each vertex are two skew hexagons and two convex hexagons so the vertex symbol is $(6_c.6_s.6_c.6_s)$. The vertex figure is then a convex rectangle. All faces are regular so this Wythoffian is uniform. 

In $P^2$ the resulting figure is again the regular apeirohedron $\{6,6|3\}$, thanks to the self-duality of $\{6,6|3\}$.
In the apeirohedron $P^{01}$ the faces of type $F_2^{\{0,1\}}$ are convex dodecagons (truncated hexagons) and the faces of type $F_2^{\{1,2\}}$ are regular, antiprismatic hexagons. There are two dodecagons and one hexagon meeting at each vertex, yielding a vertex symbol $(6_s.12_c^2)$ and an isosceles triangle as the vertex figure. For a specific choice of initial vertex the dodecagons are regular and the Wythoffian is uniform. 

In the apeirohedron $P^{02}$ the faces of type $F_2^{\{0,1\}}$ are regular, convex hexagons; the faces of type $F_2^{\{1,2\}}$ are regular convex hexagons; and the faces of type $F_2^{\{0,2\}}$ are convex rectangles. At each vertex there is a hexagon of the first type, a rectangle, a hexagon of the second type, and another rectangle, giving a vertex symbol of  $(6_c.4_c.6_c.4_c)$.  The resulting vertex figure is a skew quadrilateral. If a certain initial vertex is chosen the faces of type $F_2^{\{0,2\}}$ are squares and the Wythoffian is uniform. 

In the apeirohedron $P^{12}$ the faces of type $F_2^{\{0,1\}}$ are regular, convex hexagons. The faces of type $F_2^{\{1,2\}}$ are skew dodecagons which appear as the truncations of regular, antiprismatic hexagons. There are two dodecagons and one hexagon at each vertex yielding an isosceles triangle as the vertex figure corresponding to the vertex symbol $(6_c.12_s^2)$. The dodecagons are not regular so the Wythoffian is not uniform. 

Finally consider $P^{012}$. In this apeirohedron the faces of type $F_2^{\{0,1\}}$ are convex dodecagons (truncated hexagons), the faces of type $F_2^{\{1,2\}}$ are skew dodecagons (truncated, anstiprismatic hexagons), and the faces of type $F_2^{\{0,2\}}$ are convex rectangles. The vertex symbol is $(4_c.12_c.12_s)$ yielding a triangular vertex figure. As before, the skew dodecagons are not regular so the Wythoffian is not uniform.

\begin{figure}[h]
\begin{center}
\begin{tabular}{cccc}
\raisebox{-\totalheight}{ \includegraphics*[clip, scale = .25]{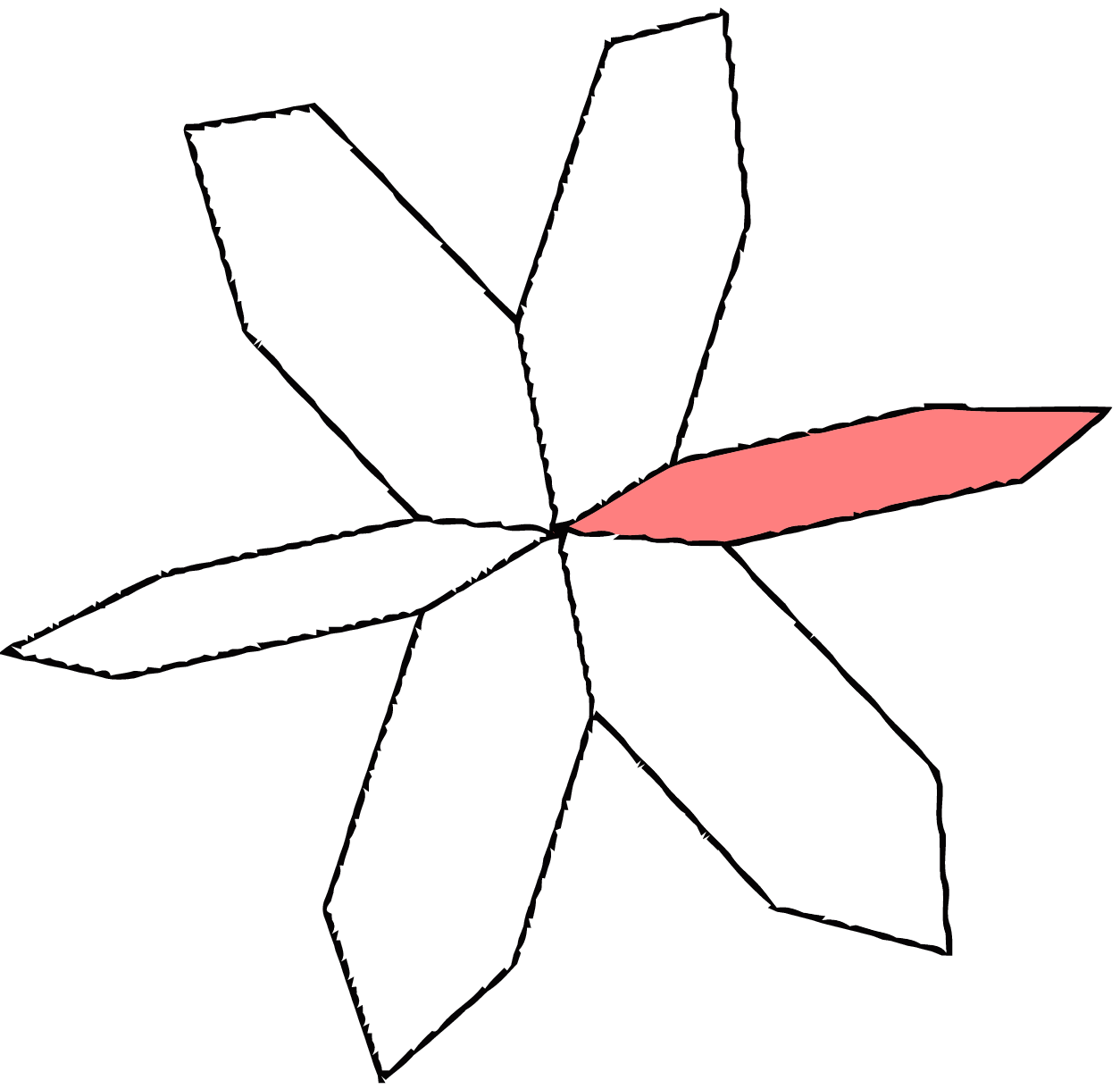}}& \raisebox{-\totalheight}{ \includegraphics*[clip, scale = .25]{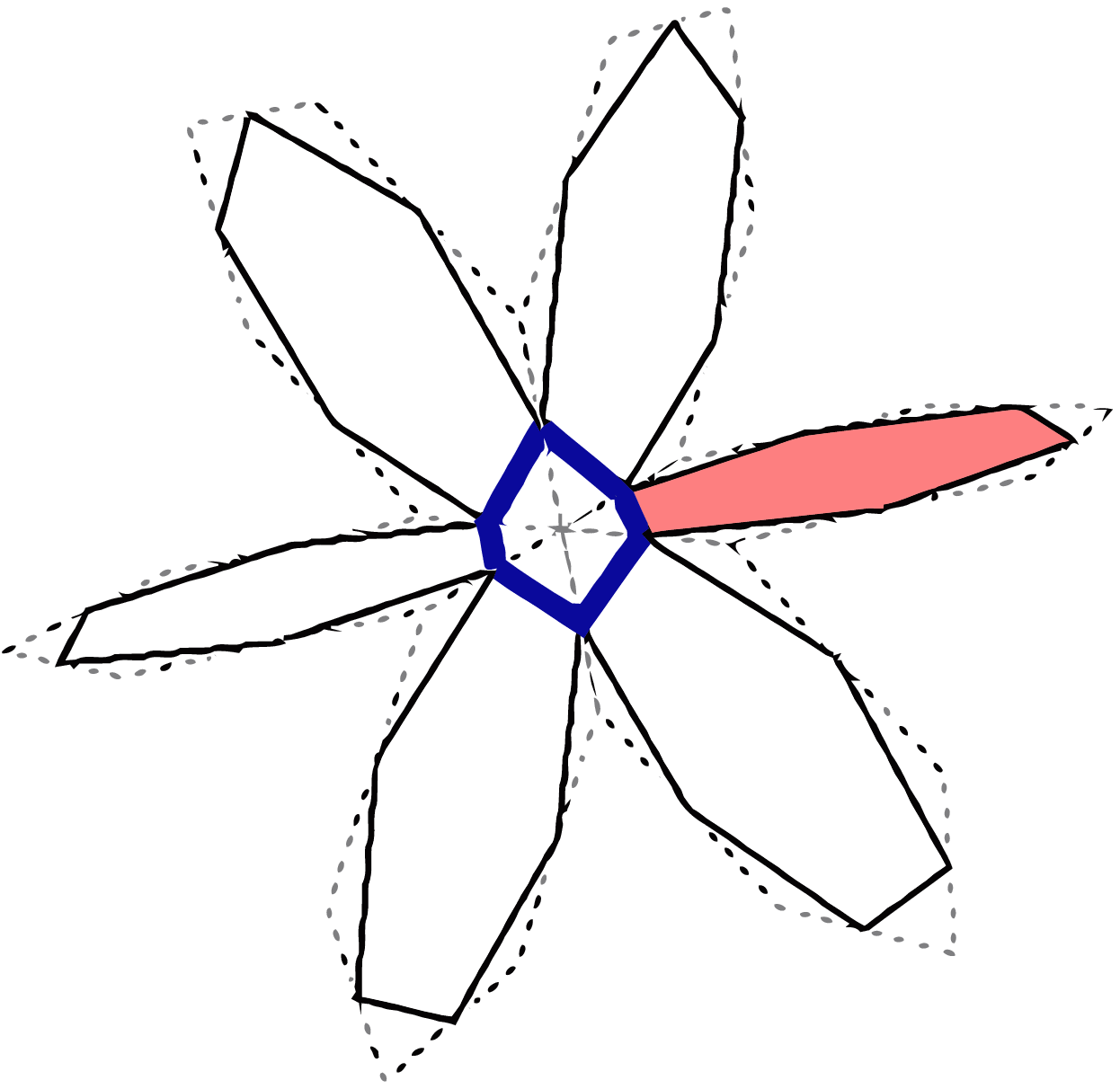}}& \raisebox{-\totalheight}{ \includegraphics*[clip, scale = .25]{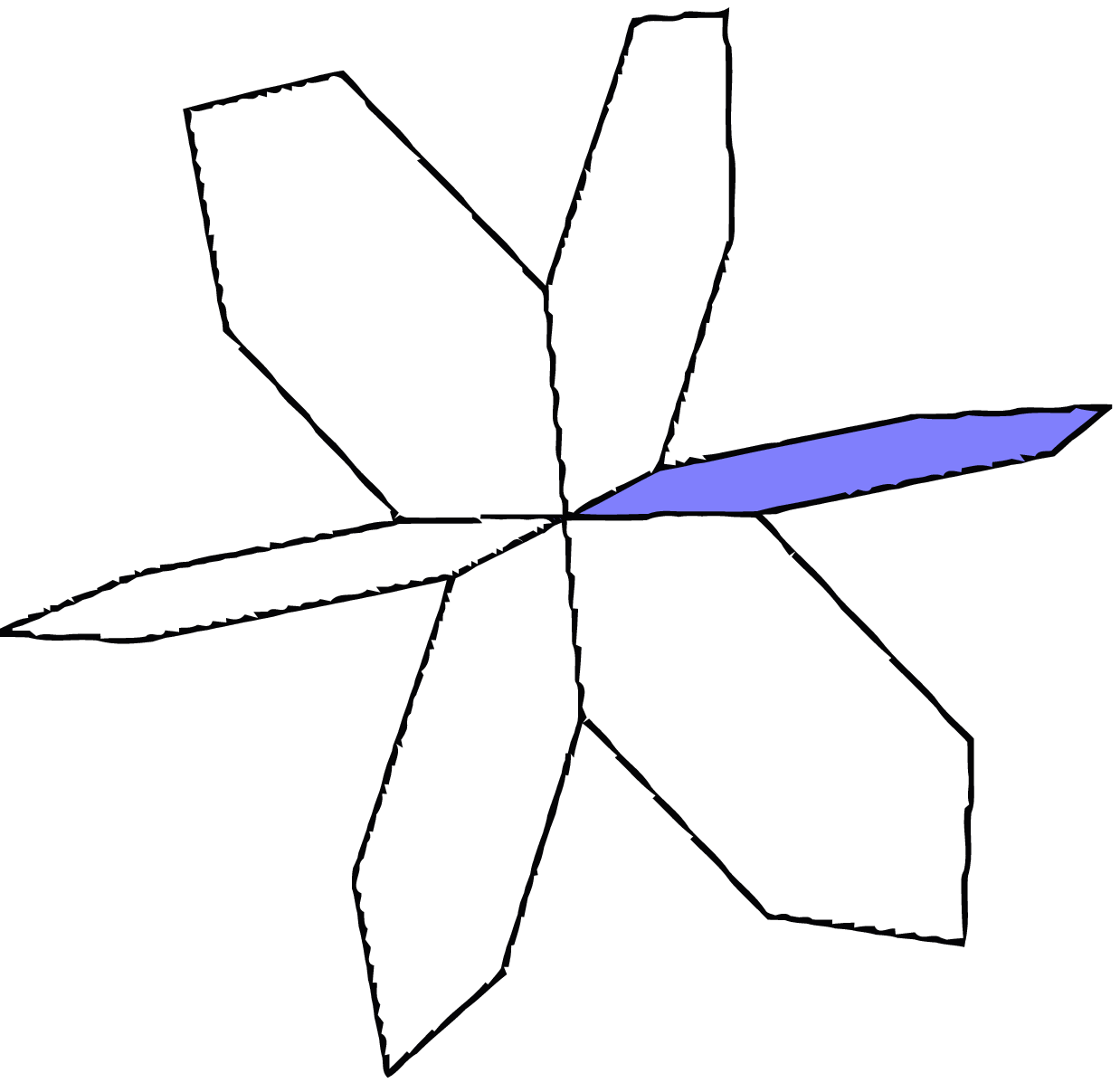}}&\raisebox{-\totalheight}{ \includegraphics*[clip, scale = .25]{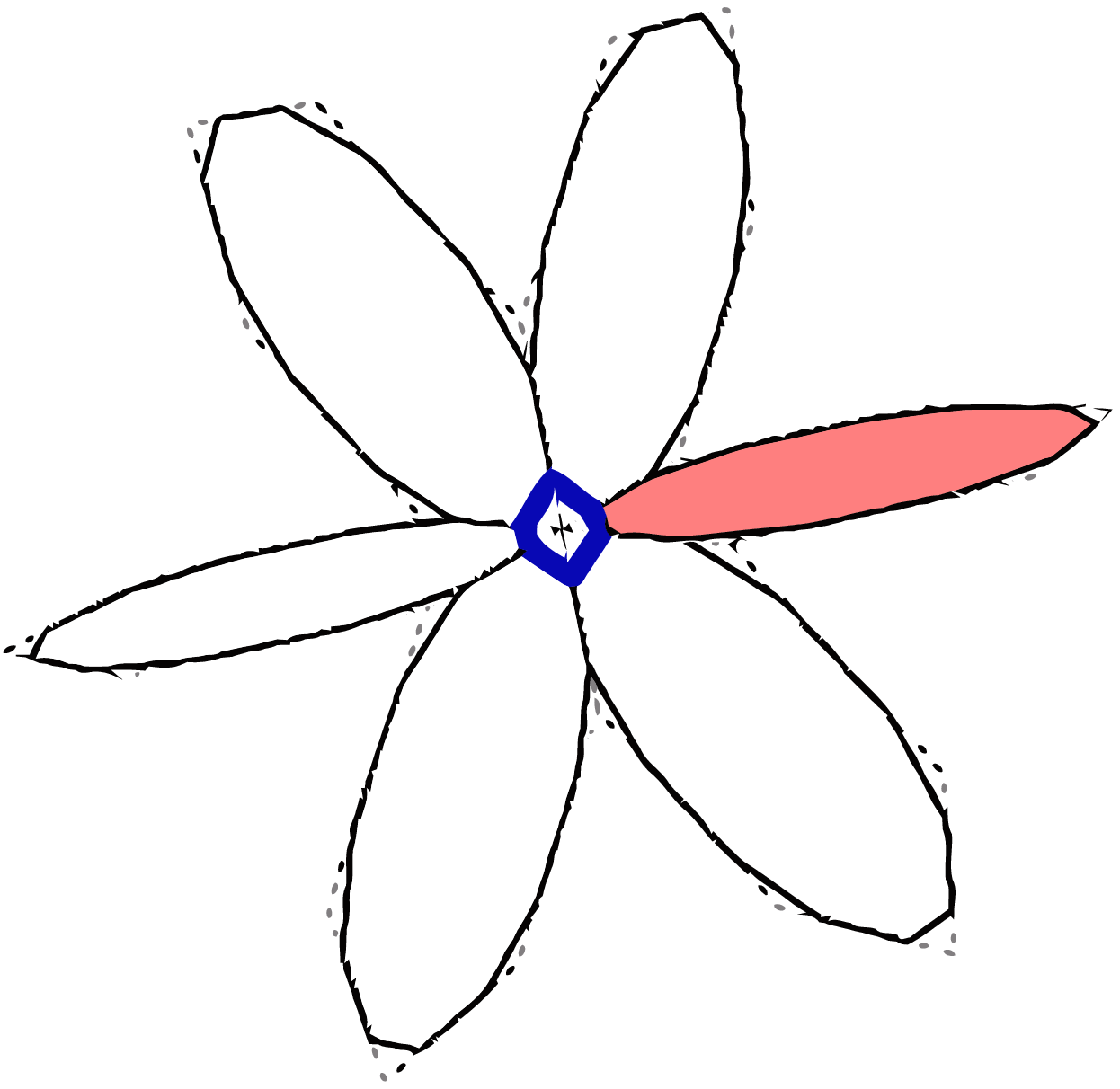}} \\
$P^{0}$ & $P^{1}$ & $P^{2}$&$P^{01}$\\
&&&\\
\raisebox{-\totalheight}{ \includegraphics*[clip, scale = .25]{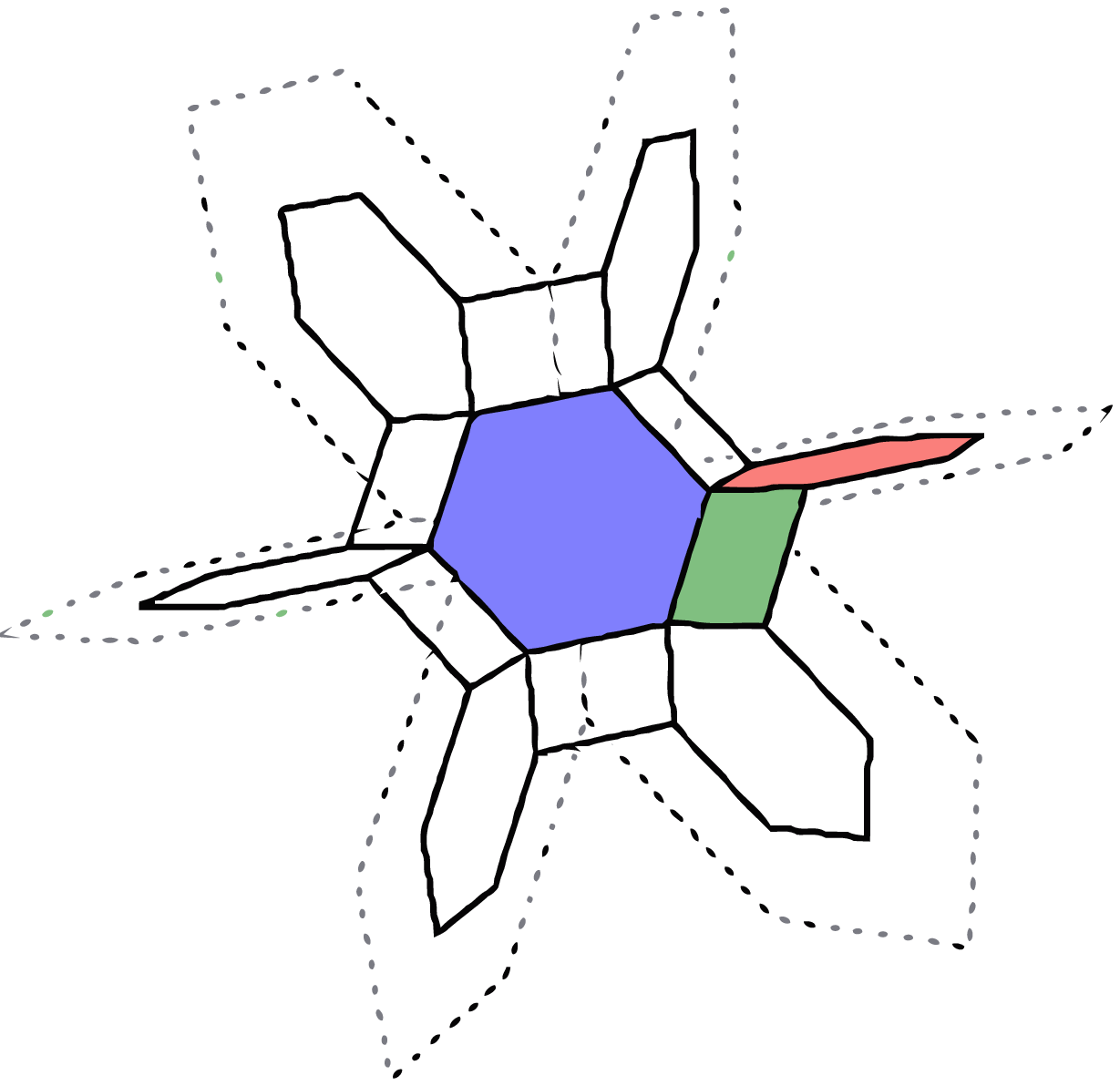}}& \raisebox{-\totalheight}{ \includegraphics*[clip, scale = .25]{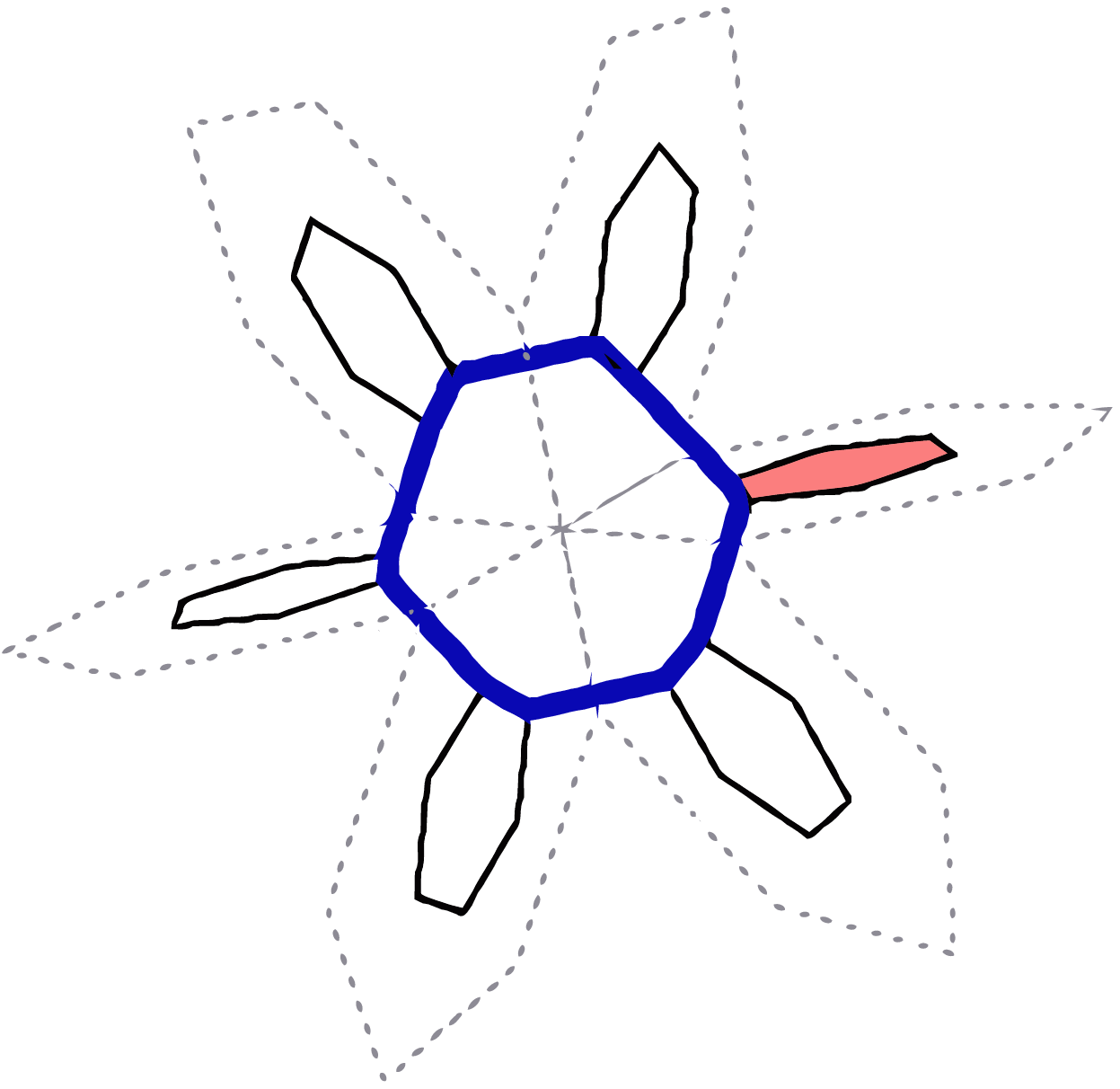}}& \raisebox{-\totalheight}{ \includegraphics*[clip, scale = .25]{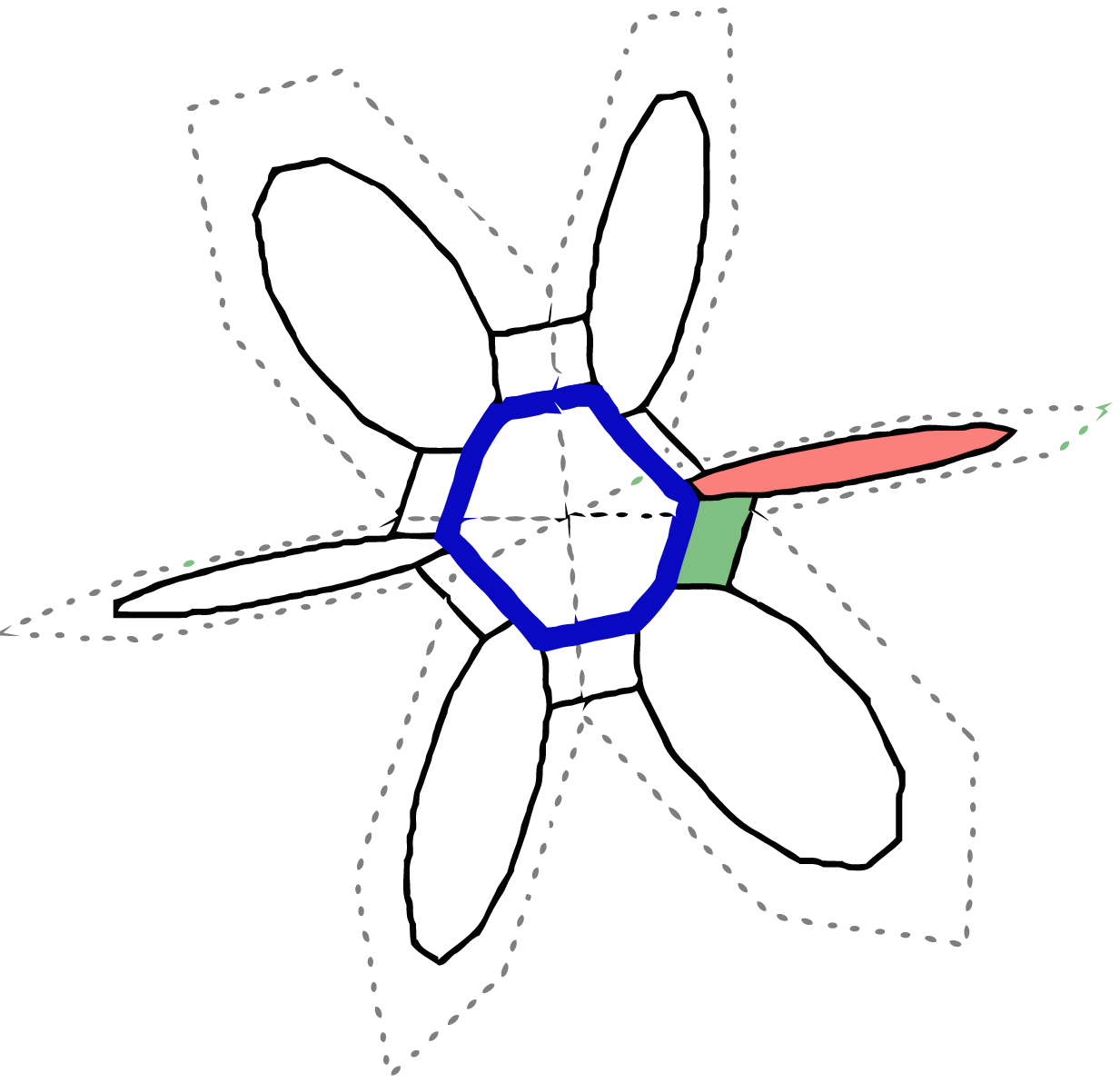}}&\\
$P^{02}$& $P^{12}$ & $P^{012}$&
\end{tabular}\caption{The Wythoffians derived from $\{6,6|3\}$.}\label{6,6l3}\end{center}\end{figure}

\noindent
{\bf Acknowledgment.} We are grateful to the anonymous referees for their careful reading of our original manuscript and their helpful suggestions that have improved our paper.

\bibliography{WythOne}
\bibliographystyle{plain}
\nocite{*}

\end{document}